
\documentclass{svmult}

%%  This file will be included when we compile the final book. You can
%%  make use of the commonly used packages and commonly defined macros
%%  from here.
%%
%%  PLEASE DO NOT CHANGE THIS FILE.
%%  PLEASE DO NOT REDFINE ANY OF THE MACROS.
%%
%%  For convenience you may wish to define your own macros in your main
%%  tex file while preparing the manuscript. However, before submitting
%%  your final file for the accepted manuscript, we will ask you to replace
%%  your macros with the full commands.
%%

\renewcommand{\email}[1]{\emailname: #1} % change the email address font style

\usepackage{mathptmx}       % selects Times Roman as basic font
\usepackage{helvet}         % selects Helvetica as sans-serif font
\usepackage{courier}        % selects Courier as typewriter font
%\usepackage{type1cm}        % activate if the above 3 fonts are
%                            % not available on your system

\usepackage{makeidx}         % allows index generation
\usepackage{graphicx}        % standard LaTeX graphics tool
                             % when including figure files
%\usepackage{multicol}        % used for the two-column index
\usepackage[bottom]{footmisc}% places footnotes at page bottom

\usepackage{latexsym}
\usepackage{amsmath}
\usepackage{amsfonts}
\usepackage{amssymb}
\usepackage{bm}

\usepackage{url}
\usepackage{algorithm}
\usepackage{algorithmic}
\usepackage[misc,geometry]{ifsym}

% Springer provides the following environments:
%   case, conjecture, corollary, definition, example, exercise, lemma,
%   note, problem, property, proposition, question, remark, solution, theorem
%   claim, proof
\renewenvironment{proof}{\noindent{\itshape Proof.}}{\smartqed\qed}

% We add two more environments:   assumption, algo
\spdefaulttheorem{assumption}{Assumption}{\upshape \bfseries}{\itshape}
\spdefaulttheorem{algo}{Algorithm}{\upshape \bfseries}{\itshape}
% Note there is also the 'algorithm' environment from the algorithm package
% which is a floating environment

% Springer defines the following commands in math mode:
%   \D upright d for differential d
%   \I upright i for imaginary unit
%   \E upright e for exponential function
%   \tens depicts tensors as sans serif upright
%   \vec depicts vectors as boldface characters instead of the arrow accent

% We add the following commonly used macros:

% vectors as boldsymbols:

% other commonly used boldsymbols:

 % vector of zeros
  % vector of ones
% boldsymbol greeks:

% Roman fonts:

% also commonly defined

% blackboards:

% commonly used shortcuts:
 % complex numbers
 % field, finite field
 % natural numbers {1, 2, ...}
 % rationals
 % reals
 % integers
% more commonly used shortcuts:
 % complex numbers
 % field, finite field
 % natural numbers {1, 2, ...}
 % rationals
 % reals
 % integers
% more commonly used shortcuts:

% and even more commonly used shortcuts:

% indicator boldface 1:
\DeclareSymbolFont{bbold}{U}{bbold}{m}{n}
\DeclareSymbolFontAlphabet{\mathbbold}{bbold}

% calligraphic letters:

% Euler fraks:

% sets as Euler fraks:

% other commonly defined commands:

 % floor
    % ceil

%% Please see macros.tex for the default packages and macros that will be included
%% when we compile the book.
%%
%% If you need additional macros (which do not clash with the ones in macros.tex)
%% then you can define them here using the \newcommand{}{} syntax.
%% Please do not use \def.
%%
%% However, when submitting your final file for publication, we would ask you to
%% replace these macros with the full commands in your entire document, to avoid
%% macros clashing with other papers. Thank you for your understanding!

\providecommand{\esssup}{\operatorname*{ess\ sup}}

%% DO NOT add any \newcommand{}{}'s below this line (and certainly no \def's).
%%%%%%%%%%%%%%%%%%%%%%%%%%%%%%%%%%%%%%%%%%%%%%%%%%%%%%%%%%%%%%%%%%%%%%%%%%%%%%%%%

\begin{document}

\title*{Approximate Quadrature Measures on Data--Defined Spaces}
% Use \titlerunning{Short Title} for an abbreviated title

\author{H. N. Mhaskar}
% Use \authorrunning{Short Author} for an abbreviated list of authors

\institute{
H. N. Mhaskar (\Letter)
 \at 
Institute of Mathematical Sciences, Claremont Graduate University, Claremont, CA 91711. \\
 \email{hmhaska@gmail.com}
 }
% See the above example for multiple authors with a common address,
% and one author with multiple addresses
% Mark the corresponding author with \Letter

\maketitle

% List each author here on a separate line
% "Lastname, Firstname Initials." as they should appear in the index
\index{Mhaskar, Hrushikesh N.}

\paragraph{Dedicated to Ian H.~Sloan on the occasion of his 80th birthday.}

\abstract{
An important question in the theory of approximate integration is to
study the conditions on the nodes $x_{k,n}$ and weights $w_{k,n}$ that allow an estimate of the form
$$
\sup_{f\in \mathcal{B}_\gamma}\left|\sum_k w_{k,n}f(x_{k,n})-\int_\mathbb{X} fd\mu^*\right| \le cn^{-\gamma}, \qquad n=1,2,\cdots,
$$
where $\mathbb{X}$ is often a manifold with its volume measure $\mu^*$, and $\mathcal{B}_\gamma$ is the unit ball of a suitably defined smoothness class, parametrized by $\gamma$. 
In this paper, we study this question in the context of a quasi-metric, locally compact, measure space $\mathbb{X}$ with a probability measure $\mu^*$. 
We show that  quadrature formulas exact for integrating the so called diffusion polynomials of degree $<n$ satisfy such estimates. Without requiring exactness, such formulas can be obtained as a solutions of some kernel-based optimization problem. We discuss the connection with the question of optimal covering radius. Our results generalize in some sense many recent results in this direction. 
}

\section{Introduction}

The theory of approximate integration of a function based on finitely many samples of the function is a very old subject. Usually, one requires the necessary quadrature formula to be exact for some finite dimensional space. For example, we mention the following theorem, called Tchakaloff's theorem \cite[Exercise~2.5.8, p.~100]{rivlin1974chebyshev}. (For simplicity of exposition, the notation used in the introduction may not be the same as in the rest of this paper.)
\begin{theorem}\label{tchakalofftheo}
Let $\mathbb{X}$ be a compact topological space, $\{\phi_j\}_{j=0}^{N-1}$ be continuous real valued functions on $\mathbb{X}$, and $\mu^*$ be a probability measure on $\mathbb{X}$ (i.e., $\mu^*$ is a positive Borel measure with $\mu^*(\mathbb{X})=1$). Then there exist $N+1$ points $x_1,\cdots,x_{N+1}$, and non--negative numbers $w_1,\cdots,w_{N+1}$ such that
\begin{equation}\label{tchakaloffquad}
\sum_{k=1}^{N+1} w_k=1, \qquad \sum_{k=1}^{N+1}w_k\phi_j(x_k)=\int_\mathbb{X} \phi_j(x)d\mu^*(x), \qquad j=0,\cdots,N-1.
\end{equation}
\end{theorem}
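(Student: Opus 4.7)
The plan is to combine Carath\'eodory's theorem with the Hahn--Banach separation theorem. The target vector to be represented is
\begin{equation*}
v := \Bigl(\int_\mathbb{X}\phi_0\,d\mu^*,\ldots,\int_\mathbb{X}\phi_{N-1}\,d\mu^*\Bigr)\in\mathbb{R}^N,
\end{equation*}
and the candidate ``atoms'' are the vectors $\Phi(x):=(\phi_0(x),\ldots,\phi_{N-1}(x))$ for $x\in\mathbb{X}$. Note that \eqref{tchakaloffquad} is exactly the statement $v=\sum_{k=1}^{N+1} w_k\Phi(x_k)$ with $w_k\ge 0$ and $\sum_k w_k = 1$, i.e., that $v$ is a convex combination of at most $N+1$ elements of $\Phi(\mathbb{X})$.

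First I would observe that $\Phi(\mathbb{X})$ is compact in $\mathbb{R}^N$, since $\mathbb{X}$ is compact and each $\phi_j$ is continuous. The convex hull $K$ of a compact subset of the finite-dimensional space $\mathbb{R}^N$ is itself compact, and in particular closed. Next I would show $v\in K$. Suppose, for contradiction, that $v\notin K$. Since $K$ is closed and convex, the Hahn--Banach separation theorem yields $c=(c_0,\ldots,c_{N-1})\in\mathbb{R}^N$ and $\alpha\in\mathbb{R}$ such that
\begin{equation*}
\sum_{j=0}^{N-1} c_j\phi_j(x) = c\cdot \Phi(x)\le \alpha \quad\text{for all } x\in\mathbb{X}, \qquad c\cdot v>\alpha.
\end{equation*}
Integrating the pointwise inequality against the probability measure $\mu^*$ gives $c\cdot v=\sum_j c_j\int\phi_j\,d\mu^*\le\alpha$, contradicting the strict inequality.

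With $v\in K$ established, Carath\'eodory's theorem applied in $\mathbb{R}^N$ produces points $\Phi(x_1),\ldots,\Phi(x_{N+1})\in\Phi(\mathbb{X})$ and non-negative weights $w_1,\ldots,w_{N+1}$ summing to $1$ with $v=\sum_{k=1}^{N+1} w_k\Phi(x_k)$. Reading this identity coordinate by coordinate yields precisely the exactness conditions of \eqref{tchakaloffquad}.

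The conceptual heart of the argument is recognizing that ``being an integral against a probability measure'' forces the vector $v$ to lie in the closed convex hull of $\Phi(\mathbb{X})$; everything else is bookkeeping. The only mildly technical point, and the place where finite dimensionality of the span of $\{\phi_j\}$ is used essentially, is promoting ``closed convex hull'' to ``convex hull'' — automatic here because convex hulls of compact sets in $\mathbb{R}^N$ are compact. I do not expect a real obstacle: no regularity of $\mathbb{X}$ beyond compactness, and no structure on the $\phi_j$ beyond continuity, are needed.
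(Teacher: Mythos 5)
Your argument is correct and complete: compactness of $\Phi(\mathbb{X})$, separation of $v$ from the (compact, hence closed) convex hull, integration against the probability measure to derive the contradiction, and Carath\'eodory to cut the representation down to $N+1$ atoms with non-negative weights summing to $1$. The paper does not supply its own proof of this statement --- it is quoted as Tchakaloff's theorem with a citation to an exercise in Rivlin's book --- but the route you take is the standard one for that exercise, so there is nothing to reconcile; the only point worth flagging is the one you already flag yourself, namely that finite-dimensionality is what lets you replace the closed convex hull by the convex hull.
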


It is very easy to see that under the conditions of Theorem~\ref{tchakalofftheo}, if $f:\mathbb{X}\to\mathbb{R}$ is continuous, and $V_N=\mathsf{span}\{1,\phi_0,\cdots,\phi_{N-1}\}$ then
\begin{equation}\label{tchakaloffquaderr}
\left|\int_\mathbb{X} f(x)d\mu^*(x)-\sum_{k=1}^{N+1}w_kf(x_k)\right|\le 2\min_{P\in V_N}\max_{x\in\mathbb{X}}|f(x)-P(x)|.
\end{equation}
A great deal of modern research is concerned with various variations of this theme, interesting from the point of view of computation. 
For example, can we ensure all the weights $w_k$ to be equal by a judicious choice of the points $x_k$, 
or can we obtain estimates similar to \eqref{tchakaloffquaderr} with essentially arbitrary points $x_k$, 
with or without requiring that the weights be positive, 
or can we obtain better rates of convergence for subspaces (e.g., suitably defined Bessel potential spaces) of the space of continuous functions than that guaranteed by \eqref{tchakaloffquaderr}?
 Of course, this research typically requires $\mathbb{X}$ to have some additional structure. 

The current paper is motivated by the spate of research within the last couple of years, in particular, by the results in \cite{brauchart2014qmc, 
brauchart2015covering, brandolini2014quadrature, ehler_covering_radius2016, ehler_qmc_grassman2016}. 
The focus in \cite{brauchart2014qmc, brauchart2015covering} is the case when $\mathbb{X}$ is the unit sphere $\mathbb{S}^q$ embedded in $\mathbb{R}^{q+1}$, and the weights are all equal. 
A celebrated result by Bondarenko et. al.  in \cite{bondarenko2010optimal} shows that for every large enough $n$, there exist $\mathcal{O}(n^q)$ points on $\mathbb{S}^q$ such that 
equal weight quadrature formulas based at these points are exact for integrating spherical polynomials of degree $<n$. In these papers (see also \cite{besovquadpap}), it is shown that for such formulas an estimate of the following form holds:
\begin{equation}\label{besovintbd}
\left|\int_{\mathbb{S}^q} f(x)d\mu^*(x)-\frac{1}{n}\sum_k f(x_k)\right|\le
\frac{c}{n^s}\|f\|_{s,p,q},
\end{equation}
where $\|\cdot\|_{s,p,q}$ is a suitably defined Bessel potential subspace of $L^p(\mu^*)$ and the smoothness index $s$ satisfies $s>q/p$, so that functions in this subspace are actually continuous. More generally, the systems of points $x_k$ for which an estimate of the form \eqref{besovintbd} holds is called an approximate QMC design. Various constructions and properties of such designs are studied.
The papers \cite{brandolini2014quadrature, ehler_covering_radius2016} study certain analogous questions in the context of a smooth, compact, Riemannian manifold. The case of a Grassmanian manifold is studied in \cite{ehler_qmc_grassman2016}, with numerical illustrations.

Our paper is also motivated by machine learning considerations, where one is given a data set of the form $\{(x_j,y_j)\}$, sampled from some unknown probability distribution $\mu$. The objective is to approximate $f(x)=\mathbb{E}(y|x)$. In this context, $\mathcal{P}=\{x_j\}$ is usually referred to as a \emph{point cloud}, and is considered to be sampled from the marginal distribution $\mu^*$.  Thus, 
in contrast to the research described above, the points $\{x_j\}$ in this context are \emph{scattered}; i.e., one does not have a choice of
stipulating their locations in advance. 

Typically, the points $\{x_j\}$ are in a high dimensional \emph{ambient space}, and the classical approximation results are inadequate for practical applications. 
 A very powerful relatively recent idea to work with such problems is the notion of a data-defined manifold. Thus, we assume that the points $\{x_j\}$ lie on a low dimensional manifold embedded in the ambient space. 
 This manifold itself is not known, but some relation on the set $\mathcal{P}$ is assumed to be known, giving rise to a graph structure with vertices on the manifold. 
 Various quantities such as the eigenvalues and eigenfunctions of the Laplace-Beltrami (or a more general elliptic partial differential) operator on the manifold can be approximated well by the corresponding objects for the so called graph Laplacian that can be computed directly from the points $\{x_j\}$ themselves (e.g., \cite{singer, lafon, belkin2007convergence, niyogi2, belkinfound, rosasco2010learning}). 
 It is shown in \cite{jones2008parameter} that a local coordinate chart on such data-defined manifolds can be obtained in terms of the heat kernel on the manifold. 
 
  In our theoretical investigations, we will not consider the statistical problem of machine learning, but assume that the marginal distribution $\mu^*$ is known. 
 Since the heat kernel can be approximated well 
 using the eigen-decomposition of the graph Laplacian \cite{coifmanlafondiffusion}, we find it convenient and essential to formulate all our assumptions in this theory only in terms of the  measure $\mu^*$ on the manifold and the heat kernel. In particular, we do not consider the question of estimating the eigenvalues and eigenfunctions of this kernel, but assume that they are given.
 
In \cite{frankbern, modlpmz}, we have studied the existence of quadrature formulas exact for certain eigenspaces in this context. 
They play a critical role in approximation theory based on these eigenspaces; e.g.,  \cite{mauropap, eignet, compbio}.
Although our proofs of the existence of quadrature formulas based on scattered data so far require the notion of gradient on a manifold, the approximation theory itself has been developed in a more general context of  locally compact, quasi-metric, measure spaces.

In this paper, we will prove certain results analogous to those in \cite{brauchart2014qmc, brauchart2015covering} in the context of locally compact, quasi-metric, measure spaces. 
In order to do so, we need to generalize the notion of an approximate QMC design to include non-equal weights, satisfying certain regularity conditions. 
We will show that an estimate of the form \eqref{besovintbd} holds if and only if it holds for what we call diffusion polynomials of certain degree. 
Conversely, if this estimate holds with non-negative weights, then the assumption of regularity is automatically satisfied. 
We will also point out the connection between such quadratures and the so called covering radius of the points on which they are based. 
Our results include their counterparts in \cite{brauchart2014qmc, brauchart2015covering}, except that we deal with slightly larger smoothness classes. 
We will discuss a construction of the approximate quadratures that yield a bound of the form \eqref{besovintbd}, without referring to the eigen-decompositon itself. 

We describe our general set up in Section~\ref{setupsect}, and discuss the main results in Section~\ref{mainsect}. The proofs are given in Section~\ref{pfsect}. Section~\ref{prepsect} reviews some preparatory
results required in the proofs. 

%2
\section{The Set-up}\label{setupsect}

In this section, we describe our general set up. In Sub-section~\ref{spacesect}, we introduce the notion of a data-defined space. In Sub-section~\ref{meassect}, we review some measure theoretic concepts. The smoothness classes in which we study the errors in approximate integration are defined in Sub-section~\ref{smoothsect}.

%2.1
\subsection{The Quasi-metric Measure Space}\label{spacesect}
Let $\mathbb{X}$ be a non-empty set. A \emph{quasi--metric} on $\mathbb{X}$ is a function $\rho :\mathbb{X}\times\mathbb{X} \to \mathbb{R}$ that satisfies the following properties: For all $x,y, z\in\mathbb{X}$,
\begin{enumerate}
\item $\rho(x,y)\ge 0$, 
\item $\rho(x,y)=0$ if and only if $x=y$, 
\item $\rho(x,y)=\rho(y,x)$, 
\item there exists a constant $\kappa_1\ge 1$ such that
\begin{equation}\label{quasimetricdef}
\rho(x,y)\le \kappa_1\{\rho(x,z)+\rho(z,y)\}, \qquad x, y, z\in\mathbb{X}.
\end{equation}
\end{enumerate}
For example, the geodesic distance on a Riemannian manifold  $\mathbb{X}$  is a quasi--metric.

The quasi--metric $\rho$ gives rise to a topology on $\mathbb{X}$, with 
$$
\{y\in\mathbb{X} : \rho(x,y)<r\}, \qquad x\in\mathbb{X},\ r>0.
$$
being a basis for the topology. In the sequel, we will write
$$
\mathbb{B}(x,r) = \{y\in\mathbb{X} : \rho(x,y)\le r\}, \ \Delta(x,r)=\mathbb{X}\setminus \mathbb{B}(x,r), \qquad x\in \mathbb{X}, \ r>0.
$$

In remainder of this paper, let $\mu^*$ be a fixed probability measure on $\mathbb{X}$. We fix a non-decreasing sequence $\{\lambda_k\}_{k=0}^\infty$ of nonnegative numbers such that $\lambda_0=0$, and $\lambda_k\uparrow \infty$ as $k\to\infty$. Also, we fix a system of continuous, bounded, and integrable functions $\{\phi_k\}_{k=0}^\infty$, orthonormal with respect to $\mu^*$; namely, for all nonnegative integers $j, k$,
\begin{equation}\label{orthonormality}
\int_\mathbb{X} \phi_k(x)\phi_j(x)d\mu^*(x) =\left\{\begin{array}{ll}
1, &\mbox{if $j=k$,}\\
0, &\mbox{otherwise.}
\end{array}\right.
\end{equation}
We will assume  that $\phi_0(x)=1$ for all $x\in\mathbb{X}$. 

For example, in the case of  a compact Riemannian manifold $\mathbb{X}$, we may take the (normalized) volume measure on $\mathbb{X}$ to be $\mu^*$, and take $\phi_k$'s to be the eigenfunctions of the Laplace--Beltrami operator on $\mathbb{X}$, corresponding to the eigenvalues $-\lambda_k^2$. 
If a different measure is assumed, then we may need to consider differential operators other than the Laplace--Beltrami operator. 
Also, it is sometimes not necessary to use the exact eigenvalues of such operators. 
For example, in the case when $\mathbb{X}$ is the unit sphere embedded in $\mathbb{R}^3$, the eigenvalues of the (negative) Laplace--Beltrami operator are given by $\sqrt{k(k+1)}$. 
The analysis is sometimes easier if we use $k$ instead. 
In general, while the exact eigenvalues might be hard to compute, an asymptotic expression is often available. 
While these considerations motivate our definitions, we observe that we are considering a very general scenario with quasi--metric measure spaces, where differential operators are not defined.
Nevertheless, we will refer to each $\phi_k$ as an \emph{eigenfunction} corresponding to the \emph{eigenvalue} $\lambda_k$, even though they are not necessarily obtained from an eigen-decomposition of any predefined differential or integral operator.

In our context, the role of polynomials will be played by diffusion polynomials, which are finite linear combinations of $\{\phi_j\}$. In particular, an element of
$$
\Pi_n :=\mathsf{span}\{\phi_j : \lambda_j <n\}
$$
will be called a diffusion polynomial of degree $<n$.

For reasons explained in the introduction, we will formulate our assumptions in terms of a formal heat kernel. The \emph{heat kernel} on $\mathbb{X}$ is defined formally by
\begin{equation}\label{heatkerndef}
K_t(x,y)=\sum_{k=0}^\infty \exp(-\lambda_k^2t)\phi_k(x){\phi_k(y)}, \qquad x, y\in \mathbb{X}, \ t>0.
\end{equation}
Although $K_t$ satisfies the semigroup property, and in light of the fact that $\lambda_0=0$, $\phi_0(x)\equiv 1$, we have formally
\begin{equation}\label{heatkernint}
\int_\mathbb{X} K_t(x,y)d\mu^*(y) =1, \qquad x\in\mathbb{X},
\end{equation}
yet $K_t$ may  not be the heat kernel in the classical sense. In particular, we need not assume $K_t$ to be nonnegative.

\begin{definition}\label{ddrdef}
The system $\Xi=(\mathbb{X},\rho,\mu^*,\{\lambda_k\}_{k=0}^\infty,\{\phi_k\}_{k=0}^\infty)$) is called a \textbf{data-defined space} if each of the following conditions are satisfied.
\begin{enumerate}
\item For each $x\in\mathbb{X}$ and $r>0$, the ball $\mathbb{B}(x,r)$ is compact.
\item There exist $q>0$ and $\kappa_2>0$ such that the following power growth bound condition holds:
\begin{equation}\label{ballmeasurecond}
\mu^*(\mathbb{B}(x,r))=\mu^*\left(\{y\in\mathbb{X} : \rho(x,y)<r\}\right) \le \kappa_2r^q, \qquad x\in\mathbb{X}, \ r>0.
\end{equation}
\item The series defining $K_t(x,y)$ converges for every $t\in (0,1]$ and $x,y\in\mathbb{X}$. Further, with $q$ as above, there exist $\kappa_3, \kappa_4>0$ such that the following Gaussian upper bound holds:
\begin{equation}\label{gaussianbd}
|K_t(x,y)|\le \kappa_3t^{-q/2}\exp\left(-\kappa_4\frac{\rho(x,y)^2}{t}\right), \qquad x, y\in \mathbb{X},\ 0<t\le 1.
\end{equation} 
\end{enumerate}
\end{definition}

There is a great deal of discussion in the literature on the validity of the  conditions in the above definition and their relationship with many other objects related to the quasi--metric space in question, (cf. for example,  \cite{davies1997,  grigor1999estimates, grigor2006heat, grigorlyan2heat}). 
In particular, it is shown in \cite[Section~5.5]{davies1997} that all the conditions defining a data-defined space are satisfied in the case of any complete, connected Riemannian manifold with non--negative Ricci curvature. 
It is shown in  \cite{kordyukov1991p}  that  our assumption on the heat kernel is valid in the case when $\mathbb{X}$ is a complete Riemannian manifold with bounded geometry, and $\{-\lambda_j^2\}$, respectively $\{\phi_j\}$, are eigenvalues, respectively eigenfunctions, for a uniformly elliptic second order differential operator satisfying certain technical conditions.

The bounds on the heat kernel are closely connected with the measures of the balls $\mathbb{B}(x,r)$. For example, using \eqref{gaussianbd}, Lemma~\ref{criticallemma} below, and the fact that
$$
\int_{\mathbb{X}} |K_t(x,y)|d\mu^*(y)\ge \int_{\mathbb{X}} K_t(x,y)d\mu^*(y)=1, \qquad x\in \mathbb{X},
$$
it is not difficult to deduce as in \cite{grigorlyan2heat} that
\begin{equation}\label{ballmeasurelowbd}
\mu^*(\mathbb{B}(x,r))\ge cr^q, \qquad 0<r\le 1.
\end{equation}
In many of the examples cited above, the kernel $K_t$ also satisfies a lower bound to match the upper bound in \eqref{gaussianbd}. In this case, Grigory\'an \cite{grigorlyan2heat} has also shown that \eqref{ballmeasurecond} is satisfied for $0<r<1$. 

We remark that the estimates  \eqref{ballmeasurecond} and \eqref{ballmeasurelowbd} together imply that $\mu^*$ satisfies the homogeneity condition
\begin{equation}\label{doublingcond}
\mu^*(\mathbb{B}(x,R))\le c_1(R/r)^q\mu^*(\mathbb{B}(x,r)), \qquad x\in\mathbb{X},\  r\in (0,1],\ R>0,
\end{equation}
where $c_1>0$ is a suitable constant.

In the sequel,  we assume that $\Xi$ is a data-defined space, and make the following convention.

\noindent\textbf{Constant convention:}\\
\textit{In the sequel, the symbols $c, c_1,\cdots$ will denote positive constants depending only on $\mathbb{X}$, $\rho$, $\mu^*$, $\kappa_1,\cdots,\kappa_5$,   and other similar fixed quantities such as the parameters denoting the various spaces. They will not depend upon the systems $\{\phi_k\}$, $\{\lambda_k\}$  by themselves, except through the quantities mentioned above. On occasions when we need to have the constants depend upon additional variables, these will be listed explicitly.  Their values may be different at different occurences, even within a single formula. The notation $A\sim B$ will mean $c_1A\le B\le c_2A$.
}
%2.2
\subsection{Measures}\label{meassect}
In this paper, it is necessary to consider a sequence of sets $\mathcal{C}_n=\{x_{1,n},\cdots,x_{M_n,n}\}$, and the quadrature weights  $w_{k,n}$, leading to sums of the form
$$
\sum_{1\le k\le M_n\atop x_{k,n}\in B}w_{k,n}f(x_{k,n}),
$$
where $B\subseteq\mathbb{X}$. 
The precise locations of the points of $\mathcal{C}_n$, or the numbers $w_{k,n}$, or even the numbers $M_n$ will play no role in our theoretical development. Therefore, we find it convenient to use a sequence of measures to abbreviate sums like the above. Accordingly,  In this subsection, we review some measure theoretical notation and definitions.

 If $\nu$ is a (signed)  measure defined on a sigma algebra $\mathfrak{M}$ of $\mathbb{X}$, its total variation measure $|\nu|$ is defined by
$$
|\nu|(B)=\sup\sum_{k=1}^\infty|\nu(U_k)|,
$$
where the supremum is taken over all countable partitions $\{U_k\}\subseteq \mathfrak{M}$ of $B$. Here, the quantity $|\nu|(\mathbb{X})$ is called the total variation of $\nu$. If $\nu$ is a signed measure, then its total variation is always finite. If $\nu$ is a positive measure, it is said to be of bounded variation if its total variation is finite.
The measure $\nu$ is said to be complete if for any  $B\in \mathfrak{M}$ with $|\nu|(B)=0$ and any subset $A\subseteq B$, $A\in \mathfrak{M}$ and $|\nu|(A)=0$.  Since any measure can be extended to a complete measure by suitably enlarging the underlying sigma algebra,  we will assume in the sequel that all the measures to be introduced in this paper are complete. 

If $\mathcal{C}\subseteq\mathbb{X}$ is a finite set, the measure $\nu$ that associates with each $x\in\mathcal{C}$ the mass $w_x$,  is defined by 
$$
\nu(B)=\sum_{x\in B} w_x.
$$
for  subsets  $B\subseteq \mathbb{X}$. Obviously, the total variation of the measure $\nu$ is given by
$$
|\nu|(B)=\sum_{x\in B} |w_x|, \qquad B\subseteq \mathbb{X}.
$$
If $f : \mathcal{C}\to\mathbb{C}$, then for $B\subseteq\mathbb{X}$,
$$
\int_B fd\nu =\sum_{x\in\mathcal{C}\cap B}w_xf(x).
$$
Thus, in the example at the start of this subsection, if $\nu_n$ is the measure that associates the mass $w_{k,n}$ with $x_{k,n}$ for $k=1,\cdots, M_n$, then we have a concise notation
$$
\sum_{1\le k\le M_n\atop x_{k,n}\in B}w_{k,n}f(x_{k,n})=\int_B fd\nu_n \left(=\int_B f(x)d\nu_n(x)\right).
$$
 
 In the sequel, we will assume that every measure introduced in this paper is a complete, sigma finite, Borel measure; i.e.,
 the sigma algebra $\mathfrak{M}$ on which it is defined contains all Borel subsets of $\mathbb{X}$. In the rest of this paper, rather than stating that $\nu$ is defined on $\mathfrak{M}$, we will follow the usual convention of referring to members of $\mathfrak{M}$ as $\nu$-measurable sets without mentioning the sigma algebra explicitly.
%2.3
\subsection{Smoothness Classes}\label{smoothsect}
 If $B\subseteq \mathbb{X}$ is $\nu$-measurable, and $f : B\to \mathbb{C}$ is a $\nu$-measurable function, we will write
$$
\|f\|_{\nu;B,p}:=\left\{\begin{array}{ll}
\displaystyle \left\{\int_B|f(x)|^pd|\nu|(x)\right\}^{1/p}, & \mbox{ if $1\le p<\infty$,}\\
\displaystyle|\nu|-\esssup_{x\in B}|f(x)|, &\mbox{ if $p=\infty$.}
\end{array}\right.
$$
We will write $L^p(\nu;B)$ to denote the class of all $\nu$--measurable functions $f$ for which $\|f\|_{\nu;B,p}<\infty$, where two functions are considered equal if they are equal $|\nu|$--almost everywhere. We will omit the mention of $\nu$ if $\nu=\mu^*$ and that of $B$ if $B=\mathbb{X}$. Thus, $L^p=L^p(\mu^*;\mathbb{X})$.
The $L^p$ closure of the set of all diffusion polynomials will be denoted by $X^p$.
 For $1\le p\le\infty$, we define $p'=p/(p-1)$ with the usual understanding that $1'=\infty$, $\infty'=1$. 

In the absence of a differentiability structure on $\mathbb{X}$, perhaps, the easiest way to define a Bessel potential space is the following. 
If $f_1\in L^p$, $f_2\in L^{p'}$ then
$$
\langle f_1, f_2\rangle:=\int_\mathbb{X} f_1(x)f_2(x)d\mu^*(x).
$$
In particular, we write
$$
\hat{f}(k)=\langle f, \phi_k\rangle, \qquad k=0,1,\cdots.
$$
For $r>0$, the pseudo--differential operator $\Delta^r$ is defined formally by
$$
\widehat{\Delta^r f}(k)=(\lambda_k+1)^r\hat{f}(k), \qquad k=0,1,\cdots.
$$
The class of all $f\in X^p$ for which there exists $\Delta^r f\in X^p$ with $\widehat{\Delta^r f}(k)$ as above is denoted by $W_r^p$. 
This definition is sometimes abbreviated in the form
$$
W_r^p=\left\{f\in X^p : \left\|\sum_k (\lambda_k+1)^r\hat{f}(k)\phi_k\right\|_p<\infty\right\}.
$$
However, since the series expansion need not converge in the $L^p$ norm, we prefer the distributional definition as we have given.

While the papers \cite{brauchart2014qmc, 
brauchart2015covering, brandolini2014quadrature, ehler_covering_radius2016, ehler_qmc_grassman2016} all deal with the spaces which we have denoted by $W_r^p$, we find it easier to consider a larger class, $H^p_\gamma$, defined as follows.
If $f\in X^p$, $r>0$, we define  a $K$-functional for $\delta>0$ by
\begin{equation}\label{kfuncdef}
\omega_r(p;f,\delta):=\inf\{\|f-f_1\|_p +\delta^r\|\Delta^rf_1\|_{p}\ :\ f_1\in W^p_r\}.
\end{equation}
If $\gamma>0$, we choose  $r>\gamma$, and define the smoothness class $H^p_\gamma$ to be the class of all $f\in X^p$ such that 
\begin{equation}\label{hpgammadef}
\|f\|_{H^p_\gamma}:=\|f\|_p+\sup_{\delta\in (0,1]}\frac{\omega_r(p;f,\delta)}{\delta^\gamma} <\infty.
\end{equation}

For example, if $\mathbb{X}=\mathbb{R}/(2\pi\mathbb{Z})$, $\mu^*$ is the arc measure on $\mathbb{X}$, $\{\phi_k\}$'s are the trigonometric monomials $\{1, \cos(k\circ), \sin(k\circ)\}_{k=1}^\infty$, and the eigenvalue $\lambda_k$ corresponding to $\cos(k\circ)$, $\sin(k\circ)$ is $|k|$, then the class $W_2^\infty$ is  the class of all twice continuously differentiable functions, while the class $H_2^\infty$ includes $f(x)=|\sin x|$.
The importance of the spaces $H_\gamma^p$ is well known in approximation theory \cite{devlorbk}.  We now describe the connection with approximation theory in our context.

If $f\in L^p$, $W\subseteq L^p$, we define
$$
\mathsf{dist}(p;f,W):=\inf_{P\in W}\|f-P\|_p.
$$

The following theorem is shown in \cite[Theorem~2.1]{mauropap} (where a different notation is used).
\begin{proposition}\label{equivprop}
Let $f\in X^p$. Then
\begin{equation}\label{equiv}
\|f\|_{H^p_\gamma} \sim \|f\|_p+\sup_{n>0}n^\gamma\mathsf{dist}(p; f, \Pi_n).
\end{equation}
\end{proposition}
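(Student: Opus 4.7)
The plan is to prove the equivalence by the standard Jackson/Bernstein (direct/inverse) machinery adapted to this abstract data-defined setting. The two key auxiliary ingredients I would establish first (or invoke from the preparatory material, since \eqref{gaussianbd} is precisely what powers them) are: (J) a Jackson-type inequality $\mathsf{dist}(p;g,\Pi_n)\le c n^{-r}\|\Delta^r g\|_p$ for $g\in W^p_r$, and (B) a Bernstein-type inequality $\|\Delta^r P\|_p \le c n^r \|P\|_p$ for $P\in \Pi_n$. Both follow from heat-kernel-based approximate spectral projectors: convolving with a smooth cutoff of the heat semigroup gives a kernel that reproduces $\Pi_n$, concentrates on a scale $1/n$ (by the Gaussian bound), and has $L^1\to L^1$ norms for its images under $\Delta^r$ that scale like $n^r$.

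For the direction $\|f\|_p+\sup_n n^\gamma \mathsf{dist}(p;f,\Pi_n)\lesssim \|f\|_{H^p_\gamma}$, given $n\ge 1$, take any near-minimizer $f_1\in W^p_r$ in the definition \eqref{kfuncdef} with $\delta=1/n$. Applying (J) to $f_1$ and splitting $f=(f-f_1)+f_1$ gives
\[
\mathsf{dist}(p;f,\Pi_n)\le \|f-f_1\|_p + c n^{-r}\|\Delta^r f_1\|_p \le c\,\omega_r(p;f,1/n) \le c n^{-\gamma}\|f\|_{H^p_\gamma},
\]
whence multiplying by $n^\gamma$ and taking the supremum finishes this half.

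For the converse, set $M:=\|f\|_p+\sup_{n\ge 1}n^\gamma\mathsf{dist}(p;f,\Pi_n)$. Given $\delta\in(0,1]$, pick $m$ with $2^m\sim 1/\delta$ and let $P_k\in\Pi_{2^k}$ be near-best approximants, $P_0$ being a constant near-best approximant in $\Pi_1$. Use $f_1:=P_m$ as the test element in \eqref{kfuncdef}: then $\|f-P_m\|_p\le M (2^m)^{-\gamma}\sim M\delta^\gamma$. Telescope $P_m=P_0+\sum_{k=1}^{m}(P_k-P_{k-1})$; each difference lies in $\Pi_{2^k}$ with
\[
\|P_k-P_{k-1}\|_p \le \|P_k-f\|_p+\|f-P_{k-1}\|_p \le c M 2^{-(k-1)\gamma},
\]
so (B) gives $\|\Delta^r(P_k-P_{k-1})\|_p\le c M 2^{k r}\cdot 2^{-k\gamma}=cM 2^{k(r-\gamma)}$. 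Since $r>\gamma$, the geometric sum is dominated by its last term, and $\|\Delta^r P_0\|_p\le c\|f\|_p$ because $P_0$ lies in a fixed finite-dimensional space where (B) trivially holds. Hence
\[
\delta^r\|\Delta^r P_m\|_p \le c\delta^r\bigl(\|f\|_p + M\,2^{m(r-\gamma)}\bigr) \le c M \delta^\gamma,
\]
which combined with the approximation bound gives $\omega_r(p;f,\delta)\le cM\delta^\gamma$, i.e.\ $\|f\|_{H^p_\gamma}\le cM$.

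The main obstacle is really packaged into (J) and (B): establishing them requires constructing a Littlewood--Paley-style kernel from the heat semigroup and showing it is a uniformly bounded convolver on $L^p(\mu^*)$. The Gaussian bound \eqref{gaussianbd} together with the measure bound \eqref{ballmeasurecond} and the homogeneity property \eqref{doublingcond} are exactly what is needed to control the $L^1$-norm of such kernels (and their $\Delta^r$-images) uniformly in $n$, via Lemma~\ref{criticallemma}-type integration arguments. Once those kernel bounds are in hand, the telescoping argument above is routine and the equivalence follows as sketched.
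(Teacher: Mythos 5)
Your proof is correct and follows the standard Jackson--Bernstein/telescoping route; the paper does not prove Proposition~\ref{equivprop} itself but cites it from \cite[Theorem~2.1]{mauropap}, and the two ingredients you isolate as (J) and (B) are exactly the estimates \eqref{polyfavard} and \eqref{pseudobern} recorded in Proposition~\ref{approxprop}, whose realization-functional form \eqref{jacksonest} encapsulates the same argument. So this is essentially the intended proof, with no gap.
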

In particular, different values of $r>\gamma$ give rise to the same smoothness class with equivalent norms (cf. \cite{devlorbk}). We note that $W^p_r\subset H^p_r$ for every  $r>0$.

%3
\section{Main Results}\label{mainsect}
In this paper, we wish to state our theorems without the requirement that the quadrature formulas have positive weights, let alone equal weights. A substitute for this requirement is the notion of regularity (sometimes called continuity) condition. The space of all signed (or positive), complete, sigma finite, Borel measures on $\mathbb{X}$ will be denoted by $\mathcal{M}$.

\begin{definition}\label{absregularmeasuredef}
Let $d>0$. A  measure $\nu\in \mathcal{M}$  will be called \textbf{$ d$--regular} if
\begin{equation}\label{regulardef}
|\nu|(\mathbb{B}(x,d))\le cd^q, \qquad x\in\mathbb{X}.
\end{equation}
The infimum of all constants $c$ which work in \eqref{regulardef} will be denoted by $|\!|\!|\nu|\!|\!|_{R,d}$, and the class of all  $d$--regular measures will be denoted by $\mathcal{R}_d$. 
\end{definition}
For example, $\mu^*$ itself is in  ${\cal R}_d$ with $|\!|\!|\mu^*|\!|\!|_{R,d}\le \kappa_2$ for \emph{every} $d>0$ (cf. \eqref{ballmeasurecond}).  If $\mathcal{C}\subset \mathbb{X}$, we define the mesh norm $\delta(\mathcal{C})$ (also known as fill distance, covering radius, density content, etc.) and minimal separation $\eta(\mathcal{C})$ by
\begin{equation}\label{meshnormdef}
\delta(\mathcal{C})=\sup_{x\in\mathbb{X}}\inf_{y\in \mathcal{C}}\rho(x,y), \qquad \eta(\mathcal{C})=\inf_{x, y\in \mathcal{C}, \ x\not=y}\rho(x,y).
\end{equation}
It is easy to verify that if $\mathcal{C}$ is finite, the measure that associates the mass $\eta(\mathcal{C})^q$ with each point of $\mathcal{C}$ is $\eta(\mathcal{C})$-regular (\cite[Lemma~5.3]{eignet}). 

\begin{definition}\label{quadmeasdef}
Let $n\ge 1$. A measure $\nu\in\mathcal{M}$ is called a \textbf{quadrature measure of order $n$} if
\begin{equation}\label{quadrature}
\int_\mathbb{X} Pd\nu=\int_\mathbb{X} Pd\mu^*, \qquad P\in\Pi_n.
\end{equation}
An \textbf{MZ (Marcinkiewicz-Zygmund) quadrature measure} of order $n$ is a quadrature measure $\nu$ of order $n$ for which $|\!|\!|\nu|\!|\!|_{R,1/n}<\infty$.
\end{definition}

Our notion of approximate quadrature measures is formulated in the following definition. 
\begin{definition}\label{approxquaddef}
Let $\aleph=\{\nu_n\}_{n=1}^\infty\subset\mathcal{M}$, $\gamma>0$, $1\le p\le \infty$. We say that $\aleph$ is a sequence of \textbf{approximate quadrature measures of class $\mathcal{A}(\gamma,p)$} if each of the following conditions hold.
\begin{enumerate}
\item 
\begin{equation}\label{ubv}
 \sup_{n\ge 1}|\nu_n|(\mathbb{X})<\infty.
 \end{equation}
\item
\begin{equation}\label{ureg}
 \sup_{n\ge 1}|\!|\!|\nu_n|\!|\!|_{R,1/n} <\infty.
 \end{equation}
\item  For  $n\ge 1$,
\begin{equation}\label{approxquadpoly}
\left|\int_\mathbb{X} Pd\mu^*-\int_\mathbb{X} Pd\nu_n\right|\le A\frac{\|P\|_{H_\gamma^p}}{n^\gamma}, \qquad P\in \Pi_n,
\end{equation}
for a positive constant $A$ independent of $P$, but possibly dependent on $\aleph$ in addition to the other fixed parameters.
\end{enumerate}
With an abuse of terminology, we will often say that $\nu$ is an approximate quadrature measure of order $n$ (and write $\nu\in\mathcal{A}(\gamma,p,n)$) to mean tacitly that it is a member of a sequence $\aleph$ of approximate quadrature measures for which \eqref{approxquadpoly} holds.
\end{definition}

Clearly, if each $\nu_n$ is a quadrature measure of order $n$, then \eqref{approxquadpoly} is satisfied for every $\gamma>0$ and $1\le p\le \infty$. 
In the case of a compact Riemannian manifold satisfying some additional conditions, the existence of quadrature measures based on scattered data that satisfy the other two conditions in the above definition are discussed in \cite{frankbern, modlpmz}. 
In particular, we have shown in \cite{modlpmz} that under certain additional conditions, a sequence $\aleph$, where each $\nu_n$ is a  positive quadrature measure of order $n$ necessarily satisfies the first two conditions in Definition~\ref{approxquaddef}. 
In Theorem~\ref{posimpliesreg} below, we will give the analogue of this result in the present context.

First, we wish to state a theorem reconciling the notion of approximate quadrature measures with the usual notion of worst case error estimates.

\begin{theorem}\label{maintheo}
Let $n\ge 1$, $1\le p\le \infty$, $\gamma>q/p$, $\nu$ be a $1/n$-regular measure   satisfying $|\nu|(\mathbb{X})<\infty$ and  \eqref{approxquadpoly}.  Then for every $f\in H_\gamma^p$,
\begin{equation}\label{approxquadfull}
\left|\int_\mathbb{X} fd\mu^*-\int_\mathbb{X} fd\nu\right|\le c\left(A+|\!|\!|\nu|\!|\!|_{R,1/n}^{1/p}(|\nu|(\mathbb{X}))^{1/p'}\right)\frac{\|f\|_{H_\gamma^p}}{n^\gamma}.
\end{equation}
\end{theorem}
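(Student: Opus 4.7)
The plan is to decompose $f$ against a near-best polynomial approximant from $\Pi_n$, apply the given quadrature bound to the polynomial part, and control the remainder via a telescoping dyadic decomposition combined with a Marcinkiewicz--Zygmund (MZ) inequality that I expect to be supplied by the preparatory Section~\ref{prepsect}.

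First I would invoke Proposition~\ref{equivprop} to pick, for each $k\ge 0$, a near-best approximant $P_{2^k n}\in\Pi_{2^k n}$ with $\|f-P_{2^k n}\|_p\le c(2^k n)^{-\gamma}\|f\|_{H_\gamma^p}$, and set $P_n=P_{2^0 n}$. A routine triangle-inequality argument combined with Proposition~\ref{equivprop} gives $\|P_n\|_{H_\gamma^p}\le c\|f\|_{H_\gamma^p}$, so that \eqref{approxquadpoly} applied to $P_n$ supplies the bound $|\int P_n\,d(\mu^*-\nu)|\le cA\|f\|_{H_\gamma^p}/n^\gamma$. Writing
$$
\int f\,d(\mu^*-\nu)=\int P_n\,d(\mu^*-\nu)+\int(f-P_n)\,d\mu^*-\int(f-P_n)\,d\nu,
$$
the middle term is at most $\|f-P_n\|_1\le\|f-P_n\|_p\le cn^{-\gamma}\|f\|_{H_\gamma^p}$ since $\mu^*(\mathbb{X})=1$. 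The bulk of the work is to bound the last term at the correct rate with the correct dependence on the regularity constants.

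By H\"older, $|\int(f-P_n)\,d\nu|\le|\nu|(\mathbb{X})^{1/p'}\|f-P_n\|_{\nu;\mathbb{X},p}$, so it suffices to show $\|f-P_n\|_{\nu;\mathbb{X},p}\le c|\!|\!|\nu|\!|\!|_{R,1/n}^{1/p}n^{-\gamma}\|f\|_{H_\gamma^p}$. Use the telescoping $f-P_n=\sum_{k\ge 1}Q_k$ with $Q_k:=P_{2^k n}-P_{2^{k-1}n}\in\Pi_{2^k n}$ and $\|Q_k\|_p\le c(2^{k-1}n)^{-\gamma}\|f\|_{H_\gamma^p}$. The inclusion $\mathbb{B}(x,1/m)\subset\mathbb{B}(x,1/n)$ for $m\ge n$ immediately yields $|\!|\!|\nu|\!|\!|_{R,1/m}\le c(m/n)^q|\!|\!|\nu|\!|\!|_{R,1/n}$. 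Assuming an MZ inequality of the form $\|Q\|_{\nu;\mathbb{X},p}\le c|\!|\!|\nu|\!|\!|_{R,1/m}^{1/p}\|Q\|_p$ for $Q\in\Pi_m$ (as I expect to be recorded in Section~\ref{prepsect} using the Gaussian heat-kernel bounds), I obtain
$$
\|Q_k\|_{\nu;\mathbb{X},p}\le c\,2^{kq/p}|\!|\!|\nu|\!|\!|_{R,1/n}^{1/p}\|Q_k\|_p\le c\,2^{k(q/p-\gamma)}n^{-\gamma}|\!|\!|\nu|\!|\!|_{R,1/n}^{1/p}\|f\|_{H_\gamma^p}.
$$
Since $\gamma>q/p$ the geometric series converges, and Minkowski yields the desired estimate on $\|f-P_n\|_{\nu;\mathbb{X},p}$. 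Assembling the three pieces gives \eqref{approxquadfull}.

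The main obstacle is the MZ inequality: without the structure of the data-defined space (the Gaussian upper bound \eqref{gaussianbd} and the ball-measure bound \eqref{ballmeasurecond}), one cannot bound the $L^p(|\nu|)$ norm of a diffusion polynomial by its $L^p(\mu^*)$ norm with a constant depending only on the regularity of $\nu$ at the matching scale. Given such an MZ estimate, the role of the hypothesis $\gamma>q/p$ is exactly to absorb the $(m/n)^{q/p}$ loss incurred when extending the MZ constant to polynomials of degree $m>n$, so that the dyadic sum remains summable. A secondary technical point is justifying convergence of the telescoping series simultaneously in $L^p(\mu^*)$ and in $L^p(|\nu|)$, but this is immediate from the term-by-term bound obtained above.
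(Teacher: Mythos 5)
Your argument is essentially the paper's own proof in different clothing: the paper takes $P_n=\sigma_{2^m}(f)$ with $2^m\le n<2^{m+1}$ (so that $\|\sigma_{2^m}(f)\|_{H_\gamma^p}\le c\|f\|_{H_\gamma^p}$ is Corollary~\ref{sigmahnormcor} rather than your triangle-inequality argument), writes the tail as $\sum_{j>m}\tau_j(f)$, and controls it by \eqref{taunuest} of Lemma~\ref{taulemma}, which is exactly your posited MZ inequality already composed with your H\"older step: $\|\tau_j(f)\|_{\nu;1}\le c\,(2^{j-m})^{q/p}|\!|\!|\nu|\!|\!|_{R,2^{-m}}^{1/p}(|\nu|(\mathbb{X}))^{1/p'}\|\tau_j(f)\|_p$, proved from Proposition~\ref{criticalprop} by the Schur-test/reproducing-kernel argument you anticipate. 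So the inequality you flag as the main obstacle is indeed supplied by Section~\ref{prepsect}, and the $2^{k(q/p-\gamma)}$ summation is identical. The one point you dismiss too quickly is the identification of the limit of the telescoping series under $\nu$: convergence of $\sum_k Q_k$ in $L^p(\mu^*)$ and term-by-term bounds in $L^1(|\nu|)$ give a limit in each space, but $\nu$ may be singular with respect to $\mu^*$ (e.g.\ atomic), so equality of the two limits is not automatic. You need to pass through the uniform norm — either via Nikolskii \eqref{nikolskii}, which gives $\|Q_k\|_\infty\le c(2^kn)^{q/p}\|Q_k\|_p\le c(2^kn)^{q/p-\gamma}\|f\|_{H_\gamma^p}$, summable precisely because $\gamma>q/p$, or as the paper does via Lemma~\ref{embeddinglemma}, which places $f$ in $X^\infty$ so the expansion converges uniformly. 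With that one sentence added, your proof is complete and matches the paper's.
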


For example, if $\mathcal{C}\subset\mathbb{X}$ is a finite set with $\eta(\mathcal{C})\sim |\mathcal{C}|^{-1/q}$, and $\nu$ is the measure that associates the mass $|\mathcal{C}|^{-1}$ with each point of $\mathcal{C}$, then our notion of approximate quadrature measures generalizes the notion of approximate QMC designs in \cite{brauchart2014qmc, brauchart2015covering, ehler_covering_radius2016}. 
Since an MZ quadrature measure of order $n$ on a compact Riemannian manifold is in $\mathcal{A}(\gamma, p, n)$, Theorem~\ref{maintheo} generalizes essentially \cite[Theorem~5]{besovquadpap} (for the spaces denoted there by $B^\gamma_{p,\infty}$, which are our $H_\gamma^p$) as well as \cite[Asssertions~(B), (C)]{brandolini2014quadrature} (except that we consider the larger smoothness class than defined directly with the Bessel potentials). 
We note finally that together with Proposition~\ref{mzequivprop}, Theorem~\ref{maintheo} implies that if $\nu\in\mathcal{A}(\gamma,p,n)$ then $\nu\in\mathcal{A}(\gamma,p, \alpha n)$ for any positive $\alpha>0$ (although the various constants will then depend upon $\alpha$).

Next, we demonstrate in Theorem~\ref{kernmintheo} below that a sequence of approximate quadrature measures can be constructed as solutions of certain optimization problems under certain additional conditions. 
These optimization problems involve certain kernels of the form $G(x,y)=\sum_k b(\lambda_k)\phi_k(x)\phi_k(y)$, where (intuitively) $b(\lambda_k)\sim (\lambda_k+1)^{-\beta}$ for some $\beta$. 
In the case of integrating functions in $L^2$, only the order of magnitude estimates on the coefficients $b(\lambda_k)$ play a role.
In the other spaces, this is not sufficient because of an absence of the Parseval identity. On the other hand, restricting ourselves to Bessel potentials is not always an option in the case of the data-defined spaces; there is generally no closed form formula for these. 
A middle ground is provided by the following definition  (\cite[Definition~2.3]{eignet}).

\begin{definition}\label{eigkerndef}
Let $\beta\in\mathbb{R}$. A function $b:\mathbb{R}\to\mathbb{R}$ will be called a mask of type $\beta$ if $b$ is an even, $S$ times continuously differentiable function such that for $t>0$, $b(t)=(1+t)^{-\beta}F_b(\log t)$ for some $F_b:\mathbb{R}\to\mathbb{R}$ such that  $|F_b^{(k)}(t)|\le c(b)$, $t\in\mathbb{R}$, $k=0,1,\cdots,S$,  and $F_b(t)\ge c_1(b)$, $t\in\mathbb{R}$.     A function $G:\mathbb{X}\times\mathbb{X}\to \mathbb{R}$ will be called  a kernel of type $\beta$ if it admits a formal expansion $G(x,y)=\sum_{j=0}^\infty b(\lambda_j)\phi_j(x)\phi_j(y)$ for some mask $b$ of type $\beta>0$. If we wish to specify the connection between $G$ and $b$, we will write $G(b;x,y)$ in place of $G$.
\end{definition}

The definition of a mask of type $\beta$ can be relaxed somewhat, for example, the various bounds on $F_b$ and its derivatives may only be assumed for sufficiently large values of $|t|$ rather than for all $t\in\mathbb{R}$. If this is the case, one can construct a new kernel by adding a suitable diffusion polynomial (of a fixed degree) to $G$, as is customary in the theory of radial basis functions, and obtain a kernel whose mask satisfies the definition given above. This does not add any new feature to our theory. Therefore, we assume the more restrictive definition as given above.

\begin{theorem}\label{kernmintheo}
Let $1\le p\le \infty$, $\beta> q/p$, $G$ be a kernel of type $\beta$ in the sense of Definition~\ref{eigkerndef}. For a  measure $\nu$, we denote
\begin{equation}\label{minexp}
M_p(\nu)=\left\|\int_\mathbb{X} G(x, \circ)d\nu(x)-\int_\mathbb{X} G(x, \circ)d\mu^*(x)\right\|_{p'}.
\end{equation}
Let $n>0$, $K$ be any compact subset of  measures  such that 
$\sup_{\nu\in K}|\nu|(\mathbb{X})\le c$ and\\ $\sup_{\nu\in K}|\!|\!|\nu|\!|\!|_{R,2^{-n}}\le c$. If there exists a quadrature measure $\nu^*$ of order $2^n$ in $K$, and $\nu^\#\in K$ satisfies
\begin{equation}\label{mincond}
M_p(\nu^\#) \le c\inf_{\nu\in K}M_p(\nu).
\end{equation}
Then $\nu^\#$ satisfies \eqref{approxquadpoly} for every $\gamma$, $0<\gamma<\beta$, and in particular, $\nu^\#\in\mathcal{A}(\gamma,p,2^n)$ for each such $\gamma$. 
\end{theorem}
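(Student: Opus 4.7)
First, I would verify Definition~\ref{approxquaddef} for the sequence containing $\nu^\#$. Conditions \eqref{ubv} and \eqref{ureg} follow at once from the hypothesis $\nu^\#\in K$ and the assumed uniform bounds $\sup_{\nu\in K}|\nu|(\mathbb{X})\le c$ and $\sup_{\nu\in K}|\!|\!|\nu|\!|\!|_{R,2^{-n}}\le c$. The substance of the theorem is the polynomial estimate \eqref{approxquadpoly} at degree $2^n$: for every $P\in\Pi_{2^n}$,
\begin{equation*}
\left|\int P\,d(\nu^\#-\mu^*)\right| \le c\,2^{-n\gamma}\,\|P\|_{H^p_\gamma}.
\end{equation*}

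The approach is to invert $T_G$ on $\Pi_{2^n}$. Since the mask satisfies $b(\lambda_j)>0$, every $P\in\Pi_{2^n}$ admits the representation $P=T_G R$ with $R\in\Pi_{2^n}$ defined by $\hat{R}(j)=\hat{P}(j)/b(\lambda_j)$. Fubini together with the symmetry of $G$ gives $\int P\,d(\nu^\#-\mu^*) = \int R\cdot T_G(\nu^\#-\mu^*)\,d\mu^*$, and H\"older's inequality combined with the near-minimality hypothesis and the fact that $\nu^*\in K$ yields
\begin{equation*}
\left|\int P\,d(\nu^\#-\mu^*)\right| \le \|R\|_p\,M_p(\nu^\#) \le c\,\|R\|_p\,M_p(\nu^*).
\end{equation*}

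To estimate $\|R\|_p$ I would decompose $P=\sum_{k=0}^{n}\tau_k P$ into smooth dyadic spectral blocks localized at frequency $\sim 2^k$. On such a block $T_G^{-1}$ acts as $\Delta^\beta$ up to a bounded multiplier, so Bernstein's inequality gives $\|T_G^{-1}\tau_k P\|_p \le c\,2^{k\beta}\|\tau_k P\|_p$; combining with the Besov-type bound $\|\tau_k P\|_p \le c\,2^{-k\gamma}\|P\|_{H^p_\gamma}$ (an easy consequence of Proposition~\ref{equivprop}) and the convergent geometric sum (since $\beta>\gamma$), one obtains
\begin{equation*}
\|R\|_p \le c \sum_{k=0}^{n} 2^{k(\beta-\gamma)}\|P\|_{H^p_\gamma} \le c\,2^{n(\beta-\gamma)}\|P\|_{H^p_\gamma}.
\end{equation*}

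The final piece is the kernel discrepancy estimate $M_p(\nu^*)\le c\,2^{-n\beta}$. Since $\nu^*$ is exact on $\Pi_{2^n}$, $T_G(\nu^*-\mu^*)$ reduces to the high-frequency kernel $G_H(x,y)=\sum_{\lambda_j\ge 2^n}b(\lambda_j)\phi_j(x)\phi_j(y)$ integrated against $\nu^*-\mu^*$. Representing $G_H$ as a truncated Laplace-type transform of the heat kernel at times $t\lesssim 2^{-2n}$ and using the Gaussian bound \eqref{gaussianbd} together with the power growth \eqref{ballmeasurecond} gives the uniform pointwise localization estimates $\sup_x\|G_H(x,\cdot)\|_\infty\le c\,2^{-n(\beta-q)}$ and $\sup_x\|G_H(x,\cdot)\|_1\le c\,2^{-n\beta}$; Riesz--Thorin interpolation then yields $\sup_x\|G_H(x,\cdot)\|_{p'}\le c\,2^{-n(\beta-q/p)}$, whence Minkowski's inequality and $|\nu^*-\mu^*|(\mathbb{X})\le c$ give the first estimate $M_p(\nu^*)\le c\,2^{-n(\beta-q/p)}$. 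Sharpening to the full rate $M_p(\nu^*)\le c\,2^{-n\beta}$ requires a Marcinkiewicz--Zygmund-type refinement that exploits the $1/2^n$-regularity of $\nu^*$ together with the quadrature property; this is the principal obstacle, and the kind of estimate presumably assembled in Section~\ref{prepsect}. With the sharp bound in hand,
\begin{equation*}
\left|\int P\,d(\nu^\#-\mu^*)\right| \le c\,2^{n(\beta-\gamma)}\|P\|_{H^p_\gamma}\cdot c\,2^{-n\beta} = c\,2^{-n\gamma}\|P\|_{H^p_\gamma},
\end{equation*}
which is exactly \eqref{approxquadpoly}.
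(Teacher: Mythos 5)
Your overall architecture coincides with the paper's: you reduce the polynomial estimate to $\bigl|\int P\,d(\nu^\#-\mu^*)\bigr|\le \|R\|_p\,M_p(\nu^\#)$ by inverting $G$ on $\Pi_{2^n}$ (the paper's operator $\mathcal{D}_G$), you bound $\|R\|_p\le c2^{n(\beta-\gamma)}\|P\|_{H^p_\gamma}$ (the paper gets this from \cite[Lemma~5.4(b)]{eignet} together with Corollary~\ref{polyspaceequivcor}; your dyadic-block multiplier argument is a reasonable sketch of the same fact), and you use near-minimality plus $\nu^*\in K$ to pass to $M_p(\nu^*)$. The regularity and total-variation conditions are indeed immediate from membership in $K$. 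All of this matches Proposition~\ref{kernminprop} and the first half of the paper's proof.

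The genuine gap is exactly where you flag it: you do not prove $M_p(\nu^*)\le c\,2^{-n\beta}$, and your proposed route would not deliver it. Two problems. First, you subtract the \emph{sharply} truncated kernel $\sum_{\lambda_j<2^n}b(\lambda_j)\phi_j\phi_j$; sharp spectral cutoffs do not inherit localization from the Gaussian bound \eqref{gaussianbd} -- the whole localization machinery (Theorem~\ref{kernloctheo}) requires a smooth, compactly supported multiplier. The paper instead subtracts the smoothly truncated kernel $\Phi_{2^n}(hb_{2^n};x,\circ)$, which still lies in $\Pi_{2^n}$, so quadrature exactness of $\nu^*$ annihilates it, while the remainder $G-\Phi_{2^n}(hb_{2^n})$ is a smooth high-pass kernel amenable to the localization estimates. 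Second, your crude bound via $\sup_x\|G_H(x,\cdot)\|_{p'}$ and $|\nu^*-\mu^*|(\mathbb{X})\le c$ loses a factor $2^{nq/p}$, which when multiplied by $\|R\|_p\le c2^{n(\beta-\gamma)}$ destroys the rate $2^{-n\gamma}$. The missing ingredient is precisely the $2^{-n}$-regularity of $\nu^*$: the paper invokes Proposition~\ref{networkkernprop}(b) (imported from \cite[Proposition~5.2]{eignet}), applied with $F\equiv 1$ and $m=n$, which integrates the localized high-pass kernel against the $2^{-n}$-regular measure $\nu^*$ and recovers the full rate $c2^{-n\beta}$ with no loss. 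Since this estimate is the crux of the theorem and you explicitly defer it as ``the principal obstacle,'' the proof is incomplete as it stands.
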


The main purpose of Theorem~\ref{kernmintheo} is to suggest a way to construct an approximate quadrature measure $\nu^\#$ by solving a minimization problem involving different compact sets as appropriate for the applications. We illustrate by a few examples.

In some applications, the interest is in the choice of the points, stipulating the quadrature weights. 
For example, in the context of the sphere, the existence of points yielding equal weights quadrature is now known \cite{bondarenko2010optimal}. 
So, one may stipulate equal weights and seek an explicit construction for the points to yield equal weights approximate quadrature measures as in \cite{brauchart2014qmc}. 
In this case, $K$ can be chosen to be the set of all equal weight measures supported at points on $\mathbb{S}^q$,
the compactness of this set following from that of the tensor product of the spheres.
In the context of machine learning, the points cannot be chosen and the interest is in finding the weights of an approximate quadrature measure.
 The existence of positive quadrature formulas (necessarily satisfying the required regularity conditions)  are known in the case of a manifold, subject to certain conditions on the points and the manifold in question \cite{frankbern, modlpmz}. 
In this case, the set $K$ can be taken to be that of all positive unit measures  supported at these points. 
In general, if $\mathbb{X}$ is compact, Tchakaloff's theorem shows that one could seek to obtain   approximate quadrature measures computationally by minimizing the quantity $M_p(\nu)$ over all positive unit measures $\nu$ supported on a number of points equal to the dimension of $\Pi_n$ for each $n$.

We make some remarks regarding the computational aspects. The problem of finding exact quadrature weights is the problem of solving an over-determined system of equations involving the eigenfunctions.
Since these eigenfunctions are themselves known only approximately from the data, it is desirable to work directly with a kernel. In the case of $L^2$, the minimization problem to find non-negative weights is the problem of minimizing
$$
\sum_{j,\ell} w_jw_\ell G^*(x_j,x_\ell), \qquad x_j,x_\ell\in \mathcal{C}\subset\mathcal{P}
$$ 
over all non-negative $w_j$ with $\sum_j w_j =1$, where
$$
G^*(x,y)=\sum_{k=1}^\infty b(\lambda_k)^2\phi_k(x)\phi_k(y).
$$
Thus, the optimization problem involves only the training data. In the context of semi--supervised learning, a large point cloud $\mathcal{P}$ is given, but the labels are available only at a much smaller subset $\mathcal{C}\subset \mathcal{P}$. 
In this case, we need to seek approximate quadrature measures supported only on $\mathcal{C}$, but may use the entire set $\mathcal{P}$ to compute these. Thus,
in order to apply Theorem~\ref{kernmintheo}, we may choose $K$ to be the set of all measures with total variation $\le 1$, supported on $\mathcal{C}$, and estimate the necessary norm expressions using
the entire point cloud $\mathcal{P}$.
We observe that we have not stipulated a precise solution of an optimization problem, only a solution in the sense of \eqref{mincond}. 
In the context of data-defined spaces, the data-based kernels themselves are only approximations of the actual kernels, and hence, Theorem~\ref{kernmintheo} provides a theoretical justification for using algorithms to find sub-optimal solutions to the minimization problem in order to find approximate quadrature formulas. 

We end this discussion by observing  a proposition used in the proof of Theorem~\ref{kernmintheo}.

\begin{proposition}\label{kernminprop}
Let $1\le p\le \infty$, $\beta> q/p$, $G$ be a kernel of type $\beta$ in the sense of Definition~\ref{eigkerndef}. 
If $n>0$, $\nu^\#\in\mathcal{M}$, and $M_p(\nu^\#) \le \tilde{A}2^{-n\beta}$, then $\nu^\#$ satisfies \eqref{approxquadpoly} with $2^n$ replacing $n$ and $A=c\tilde{A}$. 

\end{proposition}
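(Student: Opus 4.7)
The plan is to use the kernel $G$ to represent any $P\in\Pi_{2^n}$ as the image of an $L^p$ density, and then to bound the quadrature error by $M_p(\nu^\#)$ through Hölder's inequality. Writing $P=\sum_{\lambda_j<2^n}\hat P(j)\phi_j$, I would set
$$
f(x)\;:=\;\sum_{\lambda_j<2^n}\frac{\hat P(j)}{b(\lambda_j)}\phi_j(x).
$$
The condition $F_b\ge c_1(b)>0$ from Definition~\ref{eigkerndef} makes $1/b(\lambda_j)$ well defined, so $f$ is a genuine diffusion polynomial. The orthonormality of $\{\phi_j\}$ combined with the formal expansion of $G$ then gives $P(y)=\int_\mathbb{X} G(x,y)f(x)\,d\mu^*(x)$.

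Substituting this representation into $\int P\,d\mu^*-\int P\,d\nu^\#$ and applying Fubini (legitimate because $G$ is bounded via \eqref{gaussianbd} and $f$ is a finite linear combination of bounded functions),
$$
\int P\,d\mu^*-\int P\,d\nu^\#\;=\;\int_\mathbb{X} f(x)\left[\int G(x,y)\,d\mu^*(y)-\int G(x,y)\,d\nu^\#(y)\right]d\mu^*(x).
$$
By the symmetry $G(x,y)=G(y,x)$ the bracketed function of $x$ is precisely the one whose $L^{p'}$-norm is $M_p(\nu^\#)$, so Hölder's inequality yields
$$
\left|\int P\,d\mu^*-\int P\,d\nu^\#\right|\;\le\;\|f\|_p\,M_p(\nu^\#)\;\le\;\tilde A\,2^{-n\beta}\|f\|_p.
$$

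The remaining, and main, task is to establish a Bernstein--Littlewood--Paley bound $\|f\|_p\le c\,2^{n(\beta-\gamma)}\|P\|_{H^p_\gamma}$ for the chosen smoothness index $\gamma\in(q/p,\beta)$. Using the kernel-based filter operators $\sigma_N$ from Section~\ref{prepsect} (which reproduce $\Pi_N$, map into $\Pi_{2N}$, and satisfy $\|g-\sigma_N g\|_p\le c\,\mathsf{dist}(p;g,\Pi_N)$), I would decompose $P=\sigma_1 P+\sum_{k=0}^{n-1}(\sigma_{2^{k+1}}P-\sigma_{2^k}P)$, so that each summand is a diffusion polynomial concentrated at spectral scale $\sim 2^k$. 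A Bernstein-type inequality for the multiplier $1/b(\lambda)\sim(1+\lambda)^{\beta}$ applied block by block gives a factor $c\,2^{k\beta}$ on the contribution of the $k$-th block to $f$, while Proposition~\ref{equivprop} bounds the block itself by $c\,\mathsf{dist}(p;P,\Pi_{2^k})\le c\,\|P\|_{H^p_\gamma}/2^{k\gamma}$. Summing the resulting geometric series (convergent and dominated by its last term because $\gamma<\beta$) gives $\|f\|_p\le c\,2^{n(\beta-\gamma)}\|P\|_{H^p_\gamma}$, and substitution into the Hölder bound produces $c\tilde A\,\|P\|_{H^p_\gamma}/(2^n)^\gamma$, which is \eqref{approxquadpoly} with $2^n$ in place of $n$ and $A=c\tilde A$.

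The main obstacle is the block-wise Bernstein inequality for the multiplier $1/b$: without a differentiable structure on $\mathbb{X}$ it must be built purely from the Gaussian upper bound \eqref{gaussianbd}, which provides the physical-space localization of the summability kernels underlying both the operators $\sigma_N$ and the Bernstein estimate. All of this machinery is part of the preparatory toolkit in Section~\ref{prepsect}; once it is in hand, the argument above is routine, and the exchange of integration via Fubini is justified by boundedness alone.
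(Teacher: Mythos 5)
Your argument is correct and follows essentially the same route as the paper: you define the same preimage $f=\mathcal{D}_G(P)$ (what the paper calls $\mathcal{D}_G(P)$), use the identical Fubini/H\"older step to bound the quadrature error by $\|f\|_p\,M_p(\nu^\#)$, and then establish $\|f\|_p\le c\,2^{n(\beta-\gamma)}\|P\|_{H^p_\gamma}$ --- the only difference being that you sketch this last multiplier estimate via a dyadic Littlewood--Paley decomposition, whereas the paper imports it from \cite[Lemma~5.4(b)]{eignet} (giving $\|\mathcal{D}_G(P)\|_p\le c2^{n(\beta-r)}\|\Delta^rP\|_p$) combined with Corollary~\ref{polyspaceequivcor}. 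One minor remark: your restriction $\gamma\in(q/p,\beta)$ is unnecessary --- nothing in the block-by-block sum uses $\gamma>q/p$, and the conclusion should hold for every $0<\gamma<\beta$ as asserted in Theorem~\ref{kernmintheo}.
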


Next, we consider the density of the supports of the measures in a sequence of approximate quadrature measures. It is observed in \cite{brauchart2015covering, ehler_covering_radius2016} that there is a close connection between approximate QMC designs and the (asymptotically) optimal covering radii of the supports of these designs. 
The definition in these papers is given in terms of the number of points in the support. Typically, for a QMC design of order $n$, this number is $\sim n^q$. 
Therefore, it is easy to interpret this definition in terms of the mesh norm of the support of a QMC design of order $n$ being $\sim 1/n$. 
Our definition of an approximate quadrature measure sequence does not require the measures involved to be finitely supported. Therefore, the correct analogue of this definition seems to be the assertion that every ball of radius $\sim 1/n$ should intersect the support of an approximate quadrature measure of order $n$. The following 
Theorem~\ref{covertheo} gives a sharper version of this sentiment.

\begin{theorem}\label{covertheo}
Let $\gamma>0$,  $1\le p\le\infty$ and $\aleph=\{\nu_n\}$ be a sequence in $\mathcal{A}(\gamma, p)$. Let $\tilde{p}=1+q/(\gamma p')$.
 Then there exists a constant $C_1$ such that for $n\ge c$,
\begin{equation}\label{nulowbd}
|\nu_n|(\mathbb{B}(x,C_1/n^{1/\tilde{p}}))\ge c_1n^{-q/\tilde{p}}, \qquad x\in\mathbb{X}.
\end{equation} 
In particular,  if $\aleph$ is a sequence of  approximate quadrature  measures of 
class $\mathcal{A}(\gamma, 1)$, then
\begin{equation}\label{nulowbdprec}
|\nu_n|(\mathbb{B}(x,C_1/n))\sim c_1n^{-q}, \qquad x\in\mathbb{X}.
\end{equation} 
\end{theorem}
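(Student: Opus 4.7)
The plan is to test \eqref{approxquadpoly} against a localized diffusion polynomial concentrated at an arbitrary $x\in\mathbb{X}$. Specifically, fix a smooth even cut-off $h$ with $h(0)=1$ and $\operatorname{supp}h\subseteq[0,1]$, and for $N\le n$ to be chosen, set
\begin{equation*}
P_x(y)=\sum_k h(\lambda_k/N)\,\phi_k(x)\phi_k(y)\in\Pi_N.
\end{equation*}
Using the Gaussian bound \eqref{gaussianbd} together with the standard localization arguments developed in Section~\ref{prepsect}, $P_x$ satisfies
\begin{equation*}
\int_\mathbb{X} P_x\,d\mu^*=h(0)=1,\qquad |P_x(y)|\le c_S N^q\bigl(1+N\rho(x,y)\bigr)^{-S}
\end{equation*}
for arbitrary $S>0$. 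Integrating the second bound against $\mu^*$ via \eqref{ballmeasurecond} gives $\|P_x\|_p\le cN^{q/p'}$, and the Bernstein inequality $\|\Delta^r P_x\|_p\le cN^r\|P_x\|_p$ combined with the $K$-functional definition \eqref{kfuncdef} (splitting the supremum at $\delta=1/N$) yields the key bound $\|P_x\|_{H_\gamma^p}\le c\,N^{\gamma+q/p'}$.

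Substituting $P_x$ into \eqref{approxquadpoly} produces
\begin{equation*}
\left|1-\int_\mathbb{X} P_x\,d\nu_n\right|\le cA\,N^{\gamma+q/p'}/n^\gamma.
\end{equation*}
Because $\gamma/(\gamma+q/p')=1/\tilde p$ by the definition of $\tilde p$, the choice $N\sim n^{1/\tilde p}$ makes the right-hand side at most $1/2$, whence $\int_\mathbb{X} P_x\,d\nu_n\ge 1/2$. Now split this integral at $\mathbb{B}(x,R)$ with $R=C_1/N\sim n^{-1/\tilde p}$. A routine packing/covering argument built on \eqref{ballmeasurelowbd}, \eqref{doublingcond}, and \eqref{ureg} gives $|\nu_n|(\mathbb{B}(x,r))\le cr^q$ for all $r\ge 1/n$, so summing the localization bound over dyadic annuli $\mathbb{B}(x,2^{j+1}R)\setminus\mathbb{B}(x,2^jR)$ with any fixed $S>q$ yields
\begin{equation*}
\left|\int_{\Delta(x,R)}P_x\,d\nu_n\right|\le c(NR)^{q-S}=cC_1^{q-S},
\end{equation*}
which can be made $\le 1/4$ by taking $C_1$ large. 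Consequently $\int_{\mathbb{B}(x,R)}P_x\,d\nu_n\ge 1/4$, and the pointwise bound $|P_x|\le cN^q$ forces
\begin{equation*}
|\nu_n|(\mathbb{B}(x,R))\ge c/N^q\sim n^{-q/\tilde p},
\end{equation*}
which is \eqref{nulowbd}.

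For the sharper statement \eqref{nulowbdprec} the case $p=1$ gives $p'=\infty$, hence $\tilde p=1$, and the argument above already supplies $|\nu_n|(\mathbb{B}(x,C_1/n))\ge c_1 n^{-q}$. The matching upper bound is immediate from \eqref{ureg}: cover $\mathbb{B}(x,C_1/n)$ by $\mathcal{O}(1)$ balls of radius $1/n$ and apply $1/n$-regularity to each. The main obstacle in the argument is producing the localized polynomial $P_x$ with all four required properties simultaneously, and in particular verifying that the Bernstein bound on $\|P_x\|_{H_\gamma^p}$ combines with the $L^p$ estimate $\|P_x\|_p\sim N^{q/p'}$ to give precisely the exponent $\gamma+q/p'$; once this is in hand, the balance $N^{\gamma+q/p'}\sim n^\gamma$ automatically reproduces the exponent $1/\tilde p$, and the remainder is a standard dyadic tail estimate plus a packing argument.
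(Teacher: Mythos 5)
Your proposal is correct and follows essentially the same route as the paper's proof: test \eqref{approxquadpoly} on the localized kernel $\Phi_N(x,\circ)\in\Pi_N$ with $N\sim n^{1/\tilde p}$ chosen so that $\|\Phi_N(x,\circ)\|_{H_\gamma^p}\le cN^{\gamma+q/p'}\le \tfrac12 n^\gamma/(cA)$, then discard the tail outside $\mathbb{B}(x,C_1/N)$ via the regularity of $\nu_n$ and the $S$-fold localization of the kernel, and conclude from $|\Phi_N|\le cN^q$. The only cosmetic differences are that the paper obtains the $H_\gamma^p$ bound from Corollary~\ref{polyspaceequivcor} plus the Nikolskii inequality rather than directly from Bernstein and the $K$-functional, and cites Proposition~\ref{criticalprop} for the dyadic tail estimate you rederive.
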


The condition \eqref{nulowbdprec} ensures that the support of the measure $\nu\in\mathcal{A}(\gamma,1,n)$ must contain at least $cn^q$ points. 
In several papers, including \cite{brandolini2014quadrature}, this fact was used to show the existence of a function in $H_\gamma^p$ for which there holds a lower bound corresponding to the upper bound in \eqref{approxquadpoly}. 
The construction of the ``bad function'' in these papers involves the notion of infinitely differentiable functions and their pointwise defined derivatives.
Since we are not assuming any differentiability structure on $\mathbb{X}$, so the notion of a $C^\infty$ function in the sense of derivatives is not possible in this context.  

Finally, we note in this connection that the papers \cite{brauchart2014qmc, brauchart2015covering, brandolini2014quadrature} deal exclusively with non-negative weights. 
The definition of approximate QMC designs in these papers does not require a regularity condition as we have done.  We show  under some extra conditions that if $\aleph$ is a sequence of positive measures such that for  each $n\ge 1$, $\nu_n$ satisfies  \eqref{approxquadpoly} with $p=1$ and some $\gamma>0$, then $\aleph\in \mathcal{A}(\gamma,1)$. 

For this purpose, we need to overcome a technical hurdle. In the case of the sphere, the product of two spherical polynomials of degree $<n$ is another spherical polynomial of degree $<2n$. Although a similar fact is valid in many other manifolds, and has been proved in \cite{geller2011band, modlpmz} in the context of eigenfunctions of very general elliptic differential operators on certain manifolds, we need to make an explicit assumption in the context of the present paper, where we do not assume any differentiability structure.\\

\noindent
\textsc{Product assumption:}\\
\textit{For  $A, N>0$, let
\begin{equation}\label{gammanormdef}
\epsilon_{A,N}:=\sup_{\lambda_j,\lambda_k\le N}\mathsf{dist}(\infty;\phi_j\phi_k,\Pi_{AN}).
\end{equation}
We assume that there exists $A^*\ge 2$ with the following property: for \textbf{every} $R>0$, $\displaystyle\lim_{N\to\infty}N^R\epsilon_{A^*,N}= 0$.}

 In the sequel, for any $H:\mathbb{R}\to\mathbb{R}$, we define formally
\begin{equation}\label{phikerndef}
\Phi_N(H;x,y):=\sum_{j=0}^\infty H(\lambda_j/N)\phi_j(x)\phi_j(y), \qquad x,y\in\mathbb{X}, \ N>0.
\end{equation}
In the remainder of this paper, we will fix an infinitely differentiable, even function $h :\mathbb{R}\to \mathbb{R}$ such that $h(t)=1$ if $|t|\le 1/2$, $h(t)=0$ if $|t|\ge 1$, and $h$ is non-increasing on $[1/2,1]$. The mention of this function will be usually omitted from the notation; e.g., we write $\Phi_n(x,y)$ in place of $\Phi_n(h;x,y)$.

\begin{theorem}\label{posimpliesreg}
Let $n>0$, $1\le p\le\infty$, $\nu$ be a positive measure satisfying \eqref{approxquadpoly} for some $\gamma>0$. We assume that the product assumption holds, and that in addition the following inequality holds: there exists $\beta>0$ such that
\begin{equation}\label{phinlowbd}
\min_{y\in \mathbb{B}(x,\beta/m)}|\Phi_m(x,y)| \ge cm^q, \qquad x\in \mathbb{X}, \ m\ge 1.
\end{equation}
Then 
\begin{equation}\label{regmeasbd}
\nu(\mathbb{B}(x,1/n))\le cn^{-q/p}, \qquad x\in \mathbb{X}.
\end{equation}
In particular,  if $p=1$ then $|\nu|(\mathbb{X})\le c$, $\nu\in\mathcal{R}_{1/n}$, and $|\!|\!|\nu|\!|\!|_{R,1/n}\le c$.
\end{theorem}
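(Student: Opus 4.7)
My plan is to fix a point $x\in\mathbb{X}$ and probe $\nu$ near $x$ with the nonnegative test function $P_x(y):=\Phi_m(x,y)^2$ for an appropriate $m\sim n$. On the small ball $\mathbb{B}(x,\beta/m)$ the hypothesis \eqref{phinlowbd} gives $P_x\ge cm^{2q}$, which converts the mass $\nu(\mathbb{B}(x,\beta/m))$ into a lower bound for $\int_\mathbb{X} P_x\,d\nu$. The standard localization estimates $\|\Phi_m(x,\cdot)\|_s\le cm^{q/s'}$ (a consequence of the Gaussian bound \eqref{gaussianbd} belonging to the preparatory material of Section~\ref{prepsect}) will control the integrals of $P_x$ against $\mu^*$ and its $L^p$ norm. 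The role of the product assumption is to replace $P_x$, which is not itself a diffusion polynomial, by an element $Q_x\in\Pi_n$ to which \eqref{approxquadpoly} can be applied with acceptable error.

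More concretely, I would take $m=\lfloor n/A^*\rfloor$ and expand $P_x=\sum_{j,k}h(\lambda_j/m)h(\lambda_k/m)\phi_j(x)\phi_k(x)\,\phi_j\phi_k$. For each pair with $\lambda_j,\lambda_k<m$ the product assumption furnishes an approximant of $\phi_j\phi_k$ in $\Pi_{A^*m}\subseteq\Pi_n$ within $\epsilon_{A^*,m}$ in sup-norm. The number of nonzero tensor terms is $O(m^{2q})$, and the pointwise bound $|\phi_j(x)|\le cm^{q/2}$ (which I would extract from the diagonal estimate $\Phi_m(x,x)\le cm^q$) shows that summing these approximants produces $Q_x\in\Pi_n$ with $\|P_x-Q_x\|_\infty\le cm^{3q}\epsilon_{A^*,m}$, and this is $o(m^{-R})$ for every $R>0$ by the product assumption. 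Together with the standard $L^{2p}$ bound this gives $\|Q_x\|_p\le \|\Phi_m(x,\cdot)\|_{2p}^2+o(1)\le cm^{q(2-1/p)}$.

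Since $Q_x\in\Pi_n$, Proposition~\ref{equivprop} applied to $f=Q_x$ (for which $\mathsf{dist}(p;Q_x,\Pi_{n'})=0$ when $n'\ge n$) yields the Bernstein-type bound $\|Q_x\|_{H^p_\gamma}\le cn^\gamma\|Q_x\|_p$. Applying \eqref{approxquadpoly} to the constant $1\in\Pi_n$ also gives $\nu(\mathbb{X})\le 1+A$. Combining \eqref{approxquadpoly} for $Q_x$ with $\|P_x-Q_x\|_\infty=o(1)$ and $\int_\mathbb{X} P_x\,d\mu^*=\|\Phi_m(x,\cdot)\|_2^2\le cm^q$ produces
\[
\int_\mathbb{X} P_x\,d\nu \;\le\; \int_\mathbb{X} Q_x\,d\mu^*+\frac{A\|Q_x\|_{H^p_\gamma}}{n^\gamma}+o(1) \;\le\; c\,m^{q(2-1/p)},
\]
because the term $\int_\mathbb{X} P_x\,d\mu^*\le cm^q$ is absorbed into $cm^{q(2-1/p)}$ when $p\ge 1$. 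On the other hand, positivity of $\nu$ and \eqref{phinlowbd} give $\int_\mathbb{X} P_x\,d\nu\ge cm^{2q}\nu(\mathbb{B}(x,\beta/m))$. Dividing yields $\nu(\mathbb{B}(x,\beta/m))\le cm^{-q/p}$, and covering $\mathbb{B}(x,1/n)$ by a bounded number of balls of radius $\beta/m\sim 1/n$ upgrades this to \eqref{regmeasbd}. The $p=1$ corollaries are then immediate: $|\nu|(\mathbb{X})=\nu(\mathbb{X})\le 1+A$, and \eqref{regmeasbd} with $p=1$ is precisely $|\!|\!|\nu|\!|\!|_{R,1/n}\le c$.

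The main obstacle I anticipate is the bookkeeping in the approximation step: we need $\|P_x-Q_x\|_\infty$ to decay faster than any power of $m$, so that it dissolves into lower-order terms despite the roughly $m^{3q}$ amplification from the number of tensor-product pairs and the $L^\infty$ bounds on eigenfunctions. This is precisely why the product assumption is formulated as super-polynomial decay of $\epsilon_{A^*,N}$, and it is indispensable here since we make no differentiability assumption on $\mathbb{X}$ that would otherwise let one keep $P_x$ itself bandlimited.
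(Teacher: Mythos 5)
Your argument is correct and follows essentially the same route as the paper's proof: test $\nu$ against $\Phi_m(x,\cdot)^2$ for $m\sim n$, use the product assumption to replace it by some $Q\in\Pi_n$ with super-polynomially small uniform error, apply \eqref{approxquadpoly} to $Q$ together with the Bernstein-type bound $\|Q\|_{H^p_\gamma}\le cn^\gamma\|Q\|_p$ and the $L^p$ estimates on $\Phi_m(x,\cdot)$, and divide by the lower bound $cm^{2q}$ coming from \eqref{phinlowbd}. The only cosmetic differences are that your hand-rolled tensor-expansion estimate is exactly the content of \eqref{prodapprox} in Lemma~\ref{nikollemma}, and that the paper avoids your final covering step by working at scale $n/\tilde{A}$ with $\tilde{A}=\max(A^*,1/\beta)$ so that $\mathbb{B}(x,1/n)$ sits inside the ball where \eqref{phinlowbd} applies.
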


The condition \eqref{phinlowbd} is proved in \cite[Lemma~7.3]{modlpmz} in the case of compact Riemannian manifolds satisfying a gradient condition on the heat kernel (in particular, the spaces considered in the above cited papers).

%4
\section{Preparatory Results}\label{prepsect}

In this section, we collect together some known results. We will supply the proofs for the sake of completeness when they are not too complicated.

%4.1
\subsection{Results on Measures}\label{meas_res_sect}

The following proposition (cf. \cite[Proposition~5.6]{modlpmz}) reconciles different notions of regularity condition on measures defined in our papers.

\begin{proposition}\label{mzequivprop}
Let $d\in (0,1]$, $\nu\in {\cal M}$. \\
{\rm (a)} If $\nu$ is $d$--regular, then for each $r>0$ and $x\in\mathbb{X}$,
\begin{equation}\label{regreconcile}
|\nu|(\mathbb{B}(x,r))\le c|\!|\!|\nu|\!|\!|_{R,d}\ \mu^*(\mathbb{B}(x,c(r+d)))\le  c_1|\!|\!|\nu|\!|\!|_{R,d}(r+d)^q.
\end{equation}
Conversely, if for some $A>0$, $|\nu|(\mathbb{B}(x,r))\le A(r+d)^q$ or each $r>0$ and $x\in\mathbb{X}$, then $\nu$ is $d$--regular, and $|\!|\!|\nu|\!|\!|_{R,d}\le 2^q A$.\\
{\rm (b)} For each $\alpha>0$, 
\begin{equation}\label{regequiv}
|\!|\!|\nu|\!|\!|_{R,\alpha d}\le c_1(1+1/\alpha)^q |\!|\!|\nu|\!|\!|_{R,d}\le c_1^2(1+1/\alpha)^q(\alpha+1)^q|\!|\!|\nu|\!|\!|_{R,\alpha d},
\end{equation}
where $c_1$ is the constant appearing in \eqref{regreconcile}.\\
\end{proposition}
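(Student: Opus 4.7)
The proposition asserts that $d$-regularity of $\nu$ is essentially a scale-free property, with the dependence on the ratio of scales made explicit. My strategy is to prove part (a) first via a standard maximal-separation / packing argument adapted to the quasi-metric setting, and then derive (b) as an immediate consequence by applying (a) at two different scales.

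For (a), the converse direction is immediate: substituting $r=d$ into the hypothesis $|\nu|(\mathbb{B}(x,r)) \le A(r+d)^q$ yields $|\nu|(\mathbb{B}(x,d)) \le A(2d)^q$, hence $|\!|\!|\nu|\!|\!|_{R,d} \le 2^q A$. For the forward inequality, I would pick a maximal $(d/(2\kappa_1))$-separated family $\{y_i\}_{i=1}^N \subset \mathbb{B}(x,r)$. By maximality, the balls $\mathbb{B}(y_i, d/(2\kappa_1))$ already cover $\mathbb{B}(x,r)$, so
$$
|\nu|(\mathbb{B}(x,r)) \le \sum_{i=1}^N |\nu|(\mathbb{B}(y_i,d)) \le N\,|\!|\!|\nu|\!|\!|_{R,d}\,d^q.
$$
Two further invocations of the quasi-triangle inequality \eqref{quasimetricdef} show that the smaller balls $\mathbb{B}(y_i, d/(4\kappa_1^2))$ are pairwise disjoint and all contained in $\mathbb{B}(x, \kappa_1(r+d/(4\kappa_1^2)))$. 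Since $d\in(0,1]$ forces $d/(4\kappa_1^2) \le 1$, the lower volume bound \eqref{ballmeasurelowbd} gives $\mu^*(\mathbb{B}(y_i,d/(4\kappa_1^2))) \ge c d^q$ for each $i$. Summing and invoking disjointness yields $Nd^q \le c^{-1}\mu^*(\mathbb{B}(x,\kappa_1(r+d)))$, which combined with the previous display delivers the first inequality of \eqref{regreconcile}; the second then follows from the upper growth bound \eqref{ballmeasurecond}.

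For (b), applying part (a) with $r=\alpha d$ gives
$$
|\nu|(\mathbb{B}(x,\alpha d)) \le c_1\,|\!|\!|\nu|\!|\!|_{R,d}(\alpha d + d)^q = c_1 (1+1/\alpha)^q |\!|\!|\nu|\!|\!|_{R,d} (\alpha d)^q,
$$
which is the first inequality of \eqref{regequiv}. The second inequality is obtained by applying the same estimate with the roles of $d$ and $\alpha d$ interchanged (so $\alpha$ is replaced by $1/\alpha$). The main technical point I would watch is the constraint that radii remain in $(0,1]$, since the lower volume bound \eqref{ballmeasurelowbd} is only asserted there; this matches the standing hypothesis $d\in(0,1]$, but the analogous condition on $\alpha d$ must be borne in mind when iterating (a) in the proof of (b). Other than this bookkeeping, the argument is a clean packing estimate and no deeper obstacle is expected.
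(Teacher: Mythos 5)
Your proof is correct and follows essentially the same route as the paper's: a maximal-separation packing argument (cover $\mathbb{B}(x,r)$ by balls of radius comparable to $d$, bound $|\nu|$ of each by $d$-regularity, then count the centers via disjoint smaller balls and the lower volume bound \eqref{ballmeasurelowbd}), with part (b) obtained by applying part (a) at the two scales $d$ and $\alpha d$. The only differences are cosmetic choices of separation constants, and you correctly flag the same radius-in-$(0,1]$ bookkeeping that the paper's own argument relies on implicitly.
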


If $K\subseteq\mathbb{X}$ is a compact subset and $\epsilon>0$, we will say that a subset $\mathcal{C}\subseteq K$ is $\epsilon$--separated if $\rho(x,y)\ge \epsilon$ for every $x,y\in \mathcal{C}$, $x\not=y$. Since $K$ is compact, there exists a finite, maximal $\epsilon$--separated subset $\{x_1,\cdots,x_M\}$ of $K$. If $x\in K\setminus \cup_{k=1}^M \mathbb{B}(x_k,\epsilon)$, then $\{x,x_1,\cdots,x_M\}$ is a strictly larger $\epsilon$--separated subset of $K$. So, $K\subseteq 
\cup_{k=1}^M \mathbb{B}(x_k,\epsilon)$. Moreover, with $\kappa_1$ as in \eqref{quasimetricdef}, the balls $\mathbb{B}(x_k,\epsilon/(3\kappa_1))$ are mutually disjoint. \\

\noindent\textit{Proof of Proposition~\ref{mzequivprop}.} In the proof of part (a) only, let $\lambda>|\!|\!|\nu|\!|\!|_{R,d}$, $r>0$, $x\in\mathbb{X}$, and let $\{y_1,\cdots,y_N\}$ be a maximal $2d/3$--separated subset of $\mathbb{B}(x,r+2d/3)$. Then $\mathbb{B}(x,r)\subseteq \mathbb{B}(x,r+2d/3)\subseteq \cup_{j=1}^N \mathbb{B}(y_j,2d/3)$. So,
\begin{eqnarray*}
|\nu|(\mathbb{B}(x,r)) &\le& |\nu|(\mathbb{B}(x,r+2d/3))\le \sum_{j=1}^N |\nu|(\mathbb{B}(y_j,2d/3))\\
&\le& \sum_{j=1}^N |\nu|(\mathbb{B}(y_j,d)) \le \lambda Nd^q.
\end{eqnarray*}
The balls $\mathbb{B}(y_j,d/(3\kappa_1))$ are mutually disjoint, and $\cup_{j=1}^N \mathbb{B}(y_j,d/(3\kappa_1))\subseteq \mathbb{B}(x,c(r+d))$. In view of \eqref{ballmeasurelowbd}, $d^q\le c\mu^*(\mathbb{B}(y_j,d/(3\kappa_1)))$ for each $j$. So,
\begin{eqnarray*}
|\nu|(\mathbb{B}(x,r))&\le& \lambda Nd^q\le c\lambda\sum_{j=1}^N \mu^*(\mathbb{B}(y_j,d/(3\kappa_1)))=c\lambda\mu^*(\cup_{j=1}^N \mathbb{B}(y_j,d/(3\kappa_1)))\\
&\le& c\lambda \mu^*(\mathbb{B}(x,c(r+d))).
\end{eqnarray*}
Since $\lambda>|\!|\!|\nu|\!|\!|_{R,d}$ was arbitrary, this leads to the first inequality in \eqref{regreconcile}. The second inequality follows from \eqref{ballmeasurecond}. The converse statement is obvious. This completes the proof of part (a).

Using \eqref{regreconcile} with $\alpha d$ in place of $r$, we see that
$$
|\nu|(\mathbb{B}(x,\alpha d))\le c_1(\alpha+1)^qd^q|\!|\!|\nu|\!|\!|_{R,d}=c_1(1+1/\alpha)^q(\alpha d)^q|\!|\!|\nu|\!|\!|_{R,d}.
$$
This implies the first inequality in \eqref{regequiv}. The second inequality follows from the first, applied with $1/\alpha$ in place of $\alpha$.
\qed

Next, we prove a lemma (cf. \cite[Proposition~5.1]{eignet}) which captures many details of the proofs in Section~\ref{kern_res_sect}.

\begin{lemma}\label{criticallemma}
Let $\nu\in {\cal R}_d$,  $N>0$. If $g_1:[0,\infty)\to [0,\infty)$ is a nonincreasing function, then for any $N>0$, $r>0$, $x\in\mathbb{X}$,
\begin{equation}\label{g1ineq}
N^q\int_{\Delta(x,r)}g_1(N\rho(x,y))d|\nu|(y)\le c\frac{2^{q}(1+(d/r)^q)q}{1-2^{-q}}|\!|\!|\nu|\!|\!|_{R,d}\int_{rN/2}^\infty g_1(u)u^{q-1}du.
\end{equation}
\end{lemma}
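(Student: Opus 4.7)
The plan is a standard dyadic-shell argument. For $k=0,1,2,\ldots$ set $A_k := \{y \in \mathbb{X}:\, 2^k r<\rho(x,y)\le 2^{k+1}r\}$; then $\Delta(x,r)$ is the disjoint union of the $A_k$, and $A_k\subseteq \mathbb{B}(x,2^{k+1}r)$. Monotonicity of $g_1$ gives $g_1(N\rho(x,y))\le g_1(2^k rN)$ on $A_k$, and Proposition~\ref{mzequivprop}(a) combined with the elementary inequality $(a+b)^q\le 2^q(a^q+b^q)$ bounds
$$|\nu|(A_k)\le|\nu|(\mathbb{B}(x,2^{k+1}r))\le c\,|\!|\!|\nu|\!|\!|_{R,d}(2^{k+1}r+d)^q\le c'|\!|\!|\nu|\!|\!|_{R,d}\bigl((2^k r)^q+d^q\bigr).$$
Putting these together,
$$\int_{\Delta(x,r)}g_1(N\rho(x,y))\,d|\nu|(y)\le c'|\!|\!|\nu|\!|\!|_{R,d}\sum_{k=0}^\infty g_1(2^k rN)\bigl((2^k r)^q+d^q\bigr).$$

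The next step is a standard sum-to-integral comparison. I would partition $[rN/2,\infty)$ into the dyadic intervals $[2^k rN/2,\,2^{k+1}rN/2]$, $k\ge 0$, on each of which $g_1(u)\ge g_1(2^k rN)$, and use $\int_{2^k rN/2}^{2^k rN}u^{q-1}\,du=(2^k rN)^q(1-2^{-q})/q$ to obtain
$$\int_{rN/2}^\infty g_1(u)u^{q-1}\,du\;\ge\;\frac{1-2^{-q}}{q}\sum_{k=0}^\infty g_1(2^k rN)(2^k rN)^q.$$
Rearranging and multiplying by $N^q$ controls the first piece of the sum above. For the second piece, the trivial bound $2^{kq}\ge 1$ applied to the same inequality yields
$$\sum_{k=0}^\infty g_1(2^k rN)\;\le\;\frac{q}{(rN)^q(1-2^{-q})}\int_{rN/2}^\infty g_1(u)u^{q-1}\,du,$$
so multiplying by $N^q d^q$ produces the factor $(d/r)^q$. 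Adding the two contributions gives exactly the bound stated in the lemma, with constants absorbed into the generic $c$.

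There is no genuine obstacle: the whole argument is the classical dyadic-annulus technique on spaces of homogeneous type. The only bookkeeping step requiring a little care is the splitting of $(2^{k+1}r+d)^q$ into a part that scales like $2^k r$ and a part that scales like $d$, which is precisely what generates the $1+(d/r)^q$ factor in the final constant.
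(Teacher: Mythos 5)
Your proof is correct and follows essentially the same route as the paper's: decompose $\Delta(x,r)$ into dyadic annuli, bound the $|\nu|$-measure of each annulus via the $d$-regularity (Proposition~\ref{mzequivprop}(a)), and compare the resulting sum with the integral $\int_{rN/2}^\infty g_1(u)u^{q-1}\,du$ using monotonicity of $g_1$. The only difference is bookkeeping --- you split $(2^{k+1}r+d)^q$ additively into $(2^kr)^q+d^q$ and treat the two sums separately, while the paper carries the multiplicative factor $(1+(d/r)^q)$ throughout --- and both yield the stated constant.
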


\begin{proof}\ % of Lemma~\ref{criticallemma}
By replacing $\nu$ by $|\nu|/|\!|\!|\nu|\!|\!|_{R,d}$, we may assume that $\nu$ is  positive, and $|\!|\!|\nu|\!|\!|_{R,d}=1$.  Moreover, for  $r>0$, $\nu(\mathbb{B}(x,r))\le  c(1+(d/r)^q)r^q$. In this proof only, we will write $\mathbb{A}(x,t)=\{y\in\mathbb{X}\ :\ t< \rho(x,y)\le 2t\}$. We note that $\nu(\mathbb{A}(x,t))\le c2^q(1+(d/r)^q)t^q$, $t\ge r$, and 
$$
\int_{2^{R-1}}^{2^R}u^{q-1}du = \frac{1-2^{-q}}{q} 2^{Rq}.
$$
Since $g_1$ is nonincreasing, we have
\begin{eqnarray*}
\lefteqn{\int_{\Delta(x,r)}g_1(N\rho(x,y))d\nu(y)=\sum_{R=0}^\infty \int_{\mathbb{A}(x,2^{R}r)}g_1(N\rho(x,y))d\nu(y)}\\
&\le& \sum_{R=0}^\infty g_1(2^RrN)\nu(\mathbb{A}(x,2^{R}r)) \le c2^q(1+(d/r)^q)\sum_{R=0}^\infty g_1(2^RrN)(2^{R}r)^q\\
&\le&c\frac{2^{q}(1+(d/r)^q)q}{1-2^{-q}}r^q\sum_{R=0}^\infty \int_{2^{R-1}}^{2^R} g_1(urN)u^{q-1}du \\
&=& c\frac{2^{q}(1+(d/r)^q)q}{1-2^{-q}}r^q\int_{1/2}^\infty g_1(urN)u^{q-1}du\\
&=&c\frac{2^{q}(1+(d/r)^q)q}{1-2^{-q}}N^{-q}\int_{rN/2}^\infty g_1(v)v^{q-1}dv.
\end{eqnarray*}
This proves \eqref{g1ineq}.
\end{proof}

%4.2
\subsection{Results on Kernels}\label{kern_res_sect}

In our theory, a fundamental role is played by the kernels defined formally in \eqref{phikerndef}:
\begin{equation}\label{phikerndefbis}
\Phi_N(H;x,y):=\sum_{j=0}^\infty H(\lambda_j/N)\phi_j(x)\phi_j(y), \qquad x,y\in\mathbb{X}, \ N>0.
\end{equation}
To describe the properties of this kernel, we introduce the notation
$$
\||H|\|_S:=\max_{0\le k\le S}\max_{x\in\mathbb{R}}|H^{(k)}(x)|.
$$
A basic and important property of these kernels is given in the following theorem.

\begin{theorem}\label{kernloctheo}
Let  $S>q$ be an integer, $H:\mathbb{R}\to \mathbb{R}$ be an even, $S$ times continuously differentiable, compactly supported function. 
 Then for every $x,y\in \mathbb{X}$, $N>0$,
\begin{equation}\label{kernlocest}
| \Phi_N(H;x,y)|\le \frac{cN^{q}\||H\||_S}{\max(1, (N\rho(x,y))^S)}.
\end{equation}
\end{theorem}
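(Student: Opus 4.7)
\quad
The idea is to deduce the desired localization of the spectral kernel $\Phi_N(H;x,y)$ from the Gaussian upper bound \eqref{gaussianbd} on the heat kernel. The main technical step, which I would tackle first, is to produce a signed Borel representation
\begin{equation*}
H(\lambda) \;=\; \int_0^\infty e^{-\lambda^2 t}\, dA_H(t), \qquad \lambda\ge 0,
\end{equation*}
in which the total variation and weighted moments of $A_H$ are controlled quantitatively by $\||H|\|_S$. Since $H$ is even, this amounts to a Laplace-type inversion of the function $u\mapsto H(\sqrt{u})$, which is $S$-times continuously differentiable and compactly supported on $[0,\infty)$; $S$-fold integration by parts in a Post--Widder or Mellin-based inversion formula transports the smoothness of $H$ into (i) integrability of $t^{-q/2}$ against $|A_H|$ near $0$ and (ii) sufficiently fast decay of $|A_H|$ at infinity, both of which are needed to convert the Gaussian factor into the powers of $N$ and $(N\rho(x,y))^{-1}$ claimed.

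Once the representation is in hand, since $H$ has compact support only finitely many eigenvalues $\lambda_j$ contribute to the defining series of $\Phi_N$, so Fubini is immediate and yields
\begin{equation*}
\Phi_N(H;x,y) \;=\; \int_0^\infty K_{t/N^2}(x,y)\, dA_H(t).
\end{equation*}
Inserting the Gaussian bound \eqref{gaussianbd} for $K_{t/N^2}$ and pulling the factor $N^q$ outside gives
\begin{equation*}
|\Phi_N(H;x,y)| \;\le\; \kappa_3\, N^q \int_0^\infty t^{-q/2}\exp\!\bigl(-\kappa_4 (N\rho(x,y))^2/t\bigr)\, d|A_H|(t).
\end{equation*}

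The last step is to estimate this integral by splitting on $r:=N\rho(x,y)$. For $r\le 1$ the exponential is $\le 1$, and the near-zero moment bound on $|A_H|$ produces the desired $cN^q\||H|\|_S$. For $r>1$, the substitution $t=r^2 s$ yields a factor $r^{2-q}$, after which the integrability of $|A_H|$ against $t^{-M}$ for $M$ as large as roughly $(S+q)/2$ — this is precisely what the $S$-smoothness of $H$ bought us in the first step — combined with the factor $e^{-\kappa_4/s}$ extracts the desired $r^{-S}$ decay (in fact more, but only $r^{-S}$ is claimed). The main obstacle is the first step: constructing $A_H$ and tracking how successive differentiations of $H$ translate into the precise moment conditions on $A_H$ with the right constants. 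Once that is in hand, Fubini and the Gaussian bound finish the argument mechanically.
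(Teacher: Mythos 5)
Your plan founders on its first and, as you acknowledge, main step: a nonzero, compactly supported $H$ admits no representation $H(\lambda)=\int_0^\infty e^{-\lambda^2 t}\,dA_H(t)$ with $A_H$ a signed Borel measure of finite total variation (or finite weighted moments) against which the integral converges absolutely. If $\int_0^\infty e^{-ut}\,d|A_H|(t)<\infty$ for $u>0$, then $u\mapsto\int_0^\infty e^{-ut}\,dA_H(t)$ is real-analytic on $(0,\infty)$ (it extends holomorphically to the right half-plane), so it cannot coincide with $u\mapsto H(\sqrt{u})$, which vanishes on a half-line but not identically. The obstruction is not cosmetic: your Fubini step needs the representation to hold at \emph{every} $\lambda_j/N$ — the series \eqref{heatkerndef} defining $K_{t/N^2}$ runs over all $j$, not just the finitely many with $H(\lambda_j/N)\ne 0$ — and the subsequent bound $|\Phi_N(H;x,y)|\le\int_0^\infty|K_{t/N^2}(x,y)|\,d|A_H|(t)$ requires exactly the absolute convergence that forces analyticity. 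Relaxing $A_H$ to an oscillating density of unbounded variation destroys both steps. A secondary problem is that \eqref{gaussianbd} is assumed only for $0<t\le 1$, so the range $t>N^2$ of your integral is not covered by the hypothesis.

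The paper itself does not prove Theorem~\ref{kernloctheo}; it cites \cite{mauropap} and \cite[Theorem~4.3]{tauberian}. The workable routes there replace your exact Gaussian superposition by something that genuinely exists. In \cite{mauropap} one uses the Fourier-inversion identity $H(\lambda)=\frac{1}{2\pi}\int_{\mathbb{R}}\hat H(\tau)\cos(\lambda\tau)\,d\tau$ — legitimate for an even, smooth, compactly supported $H$ — together with the finite speed of wave propagation, i.e.\ the statement that $\sum_j\cos(\lambda_j\tau)\phi_j(x)\phi_j(y)$ vanishes when $\rho(x,y)>|\tau|$, which the paper records as equivalent to \eqref{gaussianbd} for $y\ne x$, plus the Christoffel bound \eqref{christbd} for the on-diagonal size. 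In \cite{tauberian} a Tauberian argument works directly with the partial sums $\sum_{\lambda_j\le u}\phi_j(x)\phi_j(y)$ and only \emph{approximates} $H$ by Gaussian superpositions with quantified error, which is compatible with compact support. If you want to keep the heat kernel as your only input, the second route is the one to pursue; the representation you deferred to the end cannot be constructed as stated.
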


 Theorem~\ref{kernloctheo}  is proved in \cite{mauropap}, and more recently  in much greater generality in \cite[Theorem~4.3]{tauberian}.
 In \cite{mauropap}, Theorem~\ref{kernloctheo} was proved under the conditions that the so called finite speed of wave propagation holds, and the following \emph{spectral bounds} hold for the so called Christoffel (or spectral) function (defined by the sum expression in \eqref{christbd} below):
\begin{equation}\label{christbd}
 \sum_{\lambda_j< N}|\phi_j(x)|^2 \le cN^q, \qquad x\in\mathbb{X}, \ N>0.
\end{equation}
We have  proved in \cite[Theorem~4.1]{frankbern} that  \eqref{gaussianbd} with $y\not=x$ is equivalent to  the  finite speed of wave propagation. We have also shown in \cite[Proposition~4.1]{frankbern} and \cite[Lemma~5.2]{eignet} that \eqref{gaussianbd} with $y=x$ 
is equivalent to  \eqref{christbd}.

The following proposition follows easily from Lemma~\ref{criticallemma} and Theorem~\ref{kernloctheo}.

\begin{proposition}\label{criticalprop}
Let  $S$, $H$ be as in Theorem~\ref{kernloctheo}, $d>0$, $\nu\in\mathcal{R}_d$, and  $x\in\mathbb{X}$. \\
{\rm (a)} If  $r\ge  1/N$, then
\begin{equation}\label{phiintaway}
\int_{\Delta(x,r)}|\Phi_N(H;x,y)|d|\nu|(y) \le c(1+(dN)^q)(rN)^{-S+q}|\!|\!|\nu|\!|\!|_{R,d}\||H\||_S.
\end{equation}
{\rm (b)} We have
\begin{equation}\label{phiinttotal}
\int_\mathbb{X}|\Phi_N(H;x,y)|d|\nu|(y)\le c(1+(dN)^q)|\!|\!|\nu|\!|\!|_{R,d}\||H\||_S,
\end{equation} 
\begin{equation}\label{philpnorm}
\|\Phi_N(H;x,\circ)\|_{\nu;\mathbb{X},p} \le cN^{q/p'}(1+(dN)^q)^{1/p}|\!|\!|\nu|\!|\!|_{R,d}^{1/p}\||H\||_S,
\end{equation}
and
\begin{equation}\label{phiintnorm}
\left\|\int_\mathbb{X}|\Phi_N(H;\circ,y)|d|\nu|(y)\right\|_p \le c(1+(dN)^q)^{1/p'}|\!|\!|\nu|\!|\!|_{R,d}^{1/p'}(|\nu|(\mathbb{X}))^{1/p}\||H\||_S.
\end{equation}
\end{proposition}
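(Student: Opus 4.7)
The plan is to reduce everything to Theorem~\ref{kernloctheo} plus the annular decay built into Lemma~\ref{criticallemma}, and then use elementary log--convexity (Riesz--Thorin--type interpolation between $L^1$ and $L^\infty$) for the $L^p$ statements.

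For part (a), I would plug the pointwise bound from Theorem~\ref{kernloctheo} into Lemma~\ref{criticallemma}. On $\Delta(x,r)$ with $r\ge 1/N$ we have $N\rho(x,y)\ge 1$, so Theorem~\ref{kernloctheo} yields $|\Phi_N(H;x,y)|\le cN^q\||H\||_S\,g_1(N\rho(x,y))$ with the nonincreasing majorant $g_1(u)=\max(1,u^S)^{-1}$. Lemma~\ref{criticallemma} then gives
$$
\int_{\Delta(x,r)}|\Phi_N(H;x,y)|\,d|\nu|(y)\le c(1+(d/r)^q)|\!|\!|\nu|\!|\!|_{R,d}\||H\||_S\int_{rN/2}^\infty g_1(u)u^{q-1}\,du,
$$
and since $S>q$ the last integral is $\le c(rN)^{q-S}$ for $rN\ge 1$. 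Finally $(d/r)^q\le (dN)^q$ because $r\ge 1/N$, yielding \eqref{phiintaway}.

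For \eqref{phiinttotal} I would split $\mathbb{X}=\mathbb{B}(x,1/N)\cup\Delta(x,1/N)$. On the ball I use the trivial sup bound $|\Phi_N(H;x,y)|\le cN^q\||H\||_S$ together with Proposition~\ref{mzequivprop}(a), which gives $|\nu|(\mathbb{B}(x,1/N))\le c|\!|\!|\nu|\!|\!|_{R,d}(1/N+d)^q\le cN^{-q}(1+(dN)^q)|\!|\!|\nu|\!|\!|_{R,d}$. On the complement I apply \eqref{phiintaway} with $r=1/N$, so $(rN)^{q-S}=1$; both contributions match the stated bound.

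The remaining two estimates are obtained by interpolation. For \eqref{philpnorm}, note that \eqref{phiinttotal} is the $L^1(|\nu|)$ endpoint and the sup bound $cN^q\||H\||_S$ from Theorem~\ref{kernloctheo} is the $L^\infty$ endpoint; log-convexity of $L^p$ norms gives
$$
\|\Phi_N(H;x,\cdot)\|_{\nu;\mathbb{X},p}\le\|\Phi_N(H;x,\cdot)\|_{\nu;\mathbb{X},\infty}^{1/p'}\|\Phi_N(H;x,\cdot)\|_{\nu;\mathbb{X},1}^{1/p},
$$
and the exponents combine into $N^{q/p'}(1+(dN)^q)^{1/p}|\!|\!|\nu|\!|\!|_{R,d}^{1/p}\||H\||_S$. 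For \eqref{phiintnorm}, set $F(z)=\int_\mathbb{X}|\Phi_N(H;z,y)|\,d|\nu|(y)$; \eqref{phiinttotal} immediately yields $\|F\|_\infty\le c(1+(dN)^q)|\!|\!|\nu|\!|\!|_{R,d}\||H\||_S$. By Fubini and the symmetry of $\Phi_N$ in $x,y$,
$$
\|F\|_1=\int_\mathbb{X}\!\int_\mathbb{X}|\Phi_N(H;z,y)|\,d\mu^*(z)\,d|\nu|(y),
$$
and the inner integral is controlled by applying \eqref{phiinttotal} with $\mu^*$ (which lies in $\mathcal{R}_{1/N}$ with $|\!|\!|\mu^*|\!|\!|_{R,1/N}\le\kappa_2$ by \eqref{ballmeasurecond}), giving $\|F\|_1\le c|\nu|(\mathbb{X})\||H\||_S$. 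Log-convexity $\|F\|_p\le\|F\|_\infty^{1/p'}\|F\|_1^{1/p}$ then produces \eqref{phiintnorm}.

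The only genuine subtlety is bookkeeping of the constants $(1+(dN)^q)$: the correct power $1/p'$ in \eqref{phiintnorm} emerges precisely because one endpoint ($L^1$) is handled using $\mu^*$ (for which the analogous factor collapses to a constant), while the $L^\infty$ endpoint is handled using $\nu$ itself. Getting this right—rather than accidentally applying \eqref{phiinttotal} with $\nu$ on both sides of Fubini, which would produce the wrong power $(1+(dN)^q)$—is the step requiring the most care.
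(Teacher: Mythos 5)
Your argument is correct and follows essentially the same route as the paper's own proof: part (a) via the pointwise localization estimate fed into Lemma~\ref{criticallemma}, the total bound by splitting off the ball $\mathbb{B}(x,1/N)$, and both $L^p$ estimates by log-convexity between the $L^1$ and $L^\infty$ endpoints, with the $L^1$ endpoint of \eqref{phiintnorm} obtained by Fubini and \eqref{phiinttotal} applied to $\mu^*$. Your closing remark about which measure is used at which endpoint is exactly the point where the exponent $1/p'$ versus $1/p$ on $(1+(dN)^q)$ is decided, and you have it right.
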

\begin{proof}\ %of Proposition~\ref{criticalprop}
Without loss of generality, we assume that $\nu$ is a positive measure and assume also the normalizations $|\!|\!|\nu|\!|\!|_{R,d}=\||H\||_S=1$. Let $x\in\mathbb{X}$, $N>0$. For $r\ge 1/N$, $d/r\le dN$. In view of \eqref{kernlocest} and \eqref{g1ineq}, we have for $x\in \mathbb{X}$:
\begin{eqnarray*}
\int_{\Delta(x,r)}|\Phi_N(H;x,y)|d\nu(y)&\le& cN^q \int_{\Delta(x,r)}(N\rho(x,y))^{-S}d\nu(y)\\
&\le& 
c(1+(dN)^q)\int_{rN/2}^\infty v^{-S+q-1}dv\\
&\le& c(1+(dN)^q)(rN)^{-S+q}.
\end{eqnarray*}
This proves \eqref{phiintaway}.

Using \eqref{phiintaway} with $r=1/N$, we obtain that
\begin{equation}\label{pf1eqn1}
\int_{\Delta(x,1/N)}|\Phi_N(H;x,y)|d\nu(y)\le c(1+(dN)^q).
\end{equation}
We observe that in view of \eqref{kernlocest}, and the fact that $\nu(\mathbb{B}(x,1/N))\le c(1/N+d)^q \le cN^{-q}(1+(dN)^q)$,
$$
\int_{\mathbb{B}(x,1/N)}|\Phi_N(H;x,y)|d\nu(y)\le cN^q\nu(\mathbb{B}(x,1/N))\le c(1+(dN)^q).
$$
 Together with \eqref{pf1eqn1}, this leads to \eqref{phiinttotal}.

The estimate \eqref{philpnorm} follows from \eqref{kernlocest} in the case $p=\infty$, and from \eqref{phiinttotal} in the case $p=1$. For $1<p<\infty$, it follows from the convexity inequality
\begin{equation}\label{convexityineq}
\|F\|_{\nu;\mathbb{X},p}\le \|F\|_{\nu;\mathbb{X},\infty}^{1/p'}\|F\|_{\nu;\mathbb{X},1}^{1/p}.
\end{equation}
The estimate \eqref{phiintnorm} is the same as \eqref{phiinttotal} in the case when $p=\infty$. In addition, using \eqref{phiinttotal} with $\mu^*$ in place of $\nu$, $1/N$ in place of $d$, we obtain 
$$
\int_\mathbb{X} |\Phi_N(H;x,y)|d\mu^*(x) = \int_\mathbb{X} |\Phi_N(H;y,x)|d\mu^*(x) \le c.
$$
Therefore,
$$
\int_\mathbb{X} \int_\mathbb{X}|\Phi_N(H;x,y)|d|\nu|(y)d\mu^*(x)=\int_\mathbb{X} \int_\mathbb{X}|\Phi_N(H;x,y)|d\mu^*(x)d|\nu|(y)\le c|\nu|(\mathbb{X}).
$$
This proves \eqref{phiintnorm} in the case when $p=1$. The estimate in the general case follows from the cases $p=1,\infty$ and \eqref{convexityineq}.
\end{proof}

Next, we study some operators based on these kernels. 
If $\nu$ is any measure on $\mathbb{X}$ and $f\in L^p$, we may define formally
\begin{equation}\label{sigmaopdef}
\sigma_N(H;\nu; f,x):=\int_\mathbb{X} f(y)\Phi_N(H;x,y)d\nu(y). 
\end{equation}

The following is an immediate corollary of  Proposition~\ref{criticalprop}, used with $\mu^*$ in place of $\nu$, $d=0$.
\begin{corollary}\label{kerncor}
We have
\begin{equation}\label{kernbdest}
\sup_{x\in\mathbb{X}}\int_{\mathbb{X}}| \Phi_N(H;x,y)|d\mu^*(y) \le c\||H\||_S,
\end{equation}
and for every $1\le p\le \infty$ and $f\in L^p$, 
\begin{equation}\label{sigmaopbd}
\| \sigma_N(H;\mu^*;f)\|_p\le c\||H\||_S\|f\|_p.
\end{equation}
\end{corollary}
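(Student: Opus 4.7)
The corollary is essentially a specialization of part (b) of Proposition~\ref{criticalprop} combined with a Schur test. The key observation is that $\mu^*$ satisfies the regularity condition $|\!|\!|\mu^*|\!|\!|_{R,d} \le \kappa_2$ for \emph{every} $d > 0$ by virtue of \eqref{ballmeasurecond}, so we can specialize the results of Proposition~\ref{criticalprop} by sending $d \to 0^+$ (or equivalently by fixing any convenient $d$, such as $d = 1/N$, which makes the factor $(1+(dN)^q)$ a harmless constant $2^q$).

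First I would prove \eqref{kernbdest}. Applying \eqref{phiinttotal} with $\nu = \mu^*$ and $d = 1/N$ yields
$$
\int_\mathbb{X} |\Phi_N(H;x,y)|\,d\mu^*(y) \le c(1 + 1)^q \kappa_2 \||H\||_S = c'\||H\||_S,
$$
uniformly in $x \in \mathbb{X}$, which is exactly \eqref{kernbdest}.

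Next, for \eqref{sigmaopbd} I would carry out a standard Schur argument. Since the kernel $\Phi_N(H;x,y)$ is symmetric in $x$ and $y$ (both $\phi_j(x)\phi_j(y)$ and $H(\lambda_j/N)$ are symmetric, cf.\ \eqref{phikerndefbis}), the bound \eqref{kernbdest} also gives
$$
\sup_{y\in\mathbb{X}}\int_\mathbb{X} |\Phi_N(H;x,y)|\,d\mu^*(x) \le c\||H\||_S.
$$
For the endpoint $p=\infty$, I would just pull the absolute value inside and use \eqref{kernbdest}:
$$
|\sigma_N(H;\mu^*;f,x)| \le \|f\|_\infty \int_\mathbb{X} |\Phi_N(H;x,y)|\,d\mu^*(y) \le c\||H\||_S\|f\|_\infty.
$$
For $p=1$, Fubini's theorem combined with the symmetric bound gives
$$
\|\sigma_N(H;\mu^*;f)\|_1 \le \int_\mathbb{X} |f(y)| \int_\mathbb{X} |\Phi_N(H;x,y)|\,d\mu^*(x)\,d\mu^*(y) \le c\||H\||_S\|f\|_1.
$$
The intermediate cases $1 < p < \infty$ follow either by Riesz--Thorin interpolation between the $p=1$ and $p=\infty$ estimates just obtained, or directly by the Schur test applied with the symmetric weight $1$ (writing $|\Phi_N|^{1/p'}\cdot|\Phi_N|^{1/p}|f|$ and using Hölder's inequality to separate the two factors of $|\Phi_N|$, then integrating).

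There is no real obstacle here; the only point that might require a sentence of comment is justifying the use of Proposition~\ref{criticalprop} with $\mu^*$ (which is not parametrized by a discrete sampling rate), but this is immediate from \eqref{ballmeasurecond}. The symmetry of $\Phi_N(H;\cdot,\cdot)$ is what allows a single bound \eqref{phiinttotal} to serve as both hypotheses of the Schur test.
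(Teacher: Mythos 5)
Your proposal is correct and follows essentially the same route as the paper: the paper derives the corollary directly from Proposition~\ref{criticalprop} applied with $\mu^*$ in place of $\nu$ (taking $d=0$, which is equivalent up to a constant to your choice $d=1/N$), and the passage from the kernel bound \eqref{kernbdest} to the operator bound \eqref{sigmaopbd} via symmetry of $\Phi_N(H;\cdot,\cdot)$ and the Schur test (or interpolation between $p=1$ and $p=\infty$) is exactly the standard argument the paper leaves implicit when it calls the corollary ``immediate.''
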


We recall that $h : \mathbb{R}\to\mathbb{R}$ denotes a fixed, infinitely differentiable,  and even function, nonincreasing on $[0,\infty)$, such that $h(t)=1$ if $|t|\le 1/2$ and $h(t)=0$ if $|t|\ge 1$. 
We  omit the mention of $h$ from the notation, and all constants $c, c_1,\cdots$ may depend upon $h$. 
 As before, we will omit the mention of $\nu$ if $\nu=\mu^*$ and that of $H$ if  $H=h$. Thus, $\Phi_N(x,y)=\Phi_N(h;x,y)$, and similarly  $\sigma_N(f,x)=\sigma_N(h;\mu^*;f,x)$, $\sigma_N(\nu;f,x)=\sigma_N(h;\nu;f,x)$. 
 The slight inconsistency is resolved by the fact that we use $\mu^*$, $\nu$, $\tilde\nu$ etc. to denote measures and  $h$, $g$, $b$, $H$, etc. to denote functions. We do not consider this to be a sufficiently important issue to complicate our notations. 

The following proposition gives the approximation properties of the kernels, and summarizes some important inequalities in approximation theory in this context. Different parts of this proposition are proved in \cite{mauropap, eignet}.

\begin{proposition}\label{approxprop}
Let $1\le p\le\infty$,  $N>0$, $r>0$. \\
{\rm (a)} For $f\in L^p$,
\begin{equation}\label{goodapprox}
\mathsf{dist}(p;f,\Pi_N)\le \|f-\sigma_N(f)\|_p\le c\mathsf{dist}(p;f,\Pi_{N/2}).
\end{equation}
{\rm (b)} If $f\in W^p_r$, then
\begin{equation}\label{polyfavard}
\mathsf{dist}(p;f,\Pi_N)\le \|f-\sigma_N(f)\|_p \le cN^{-r}\|\Delta^rf\|_p.
\end{equation}
{\rm (c)}
For  $P\in \Pi_N$,
\begin{equation}\label{pseudobern}
\|\Delta^rP\|_p \le cN^r\|P\|_p.
\end{equation}
{\rm (d)} For $f\in L^p$,
\begin{equation}\label{jacksonest}
\omega_r(p;f,1/N)\le \|f-\sigma_N(f)\|_p +N^{-r}\|\Delta^r\sigma_N(f)\|_p \le c\omega_r(p;f,1/N).
\end{equation}

\end{proposition}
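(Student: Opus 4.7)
My plan is to prove (a), (b), (c) separately and derive (d) from them and from \eqref{sigmaopbd}. For (a), the left inequality is immediate since $\sigma_N(f)\in\Pi_N$. For the right, any $P\in\Pi_{N/2}$ satisfies $\sigma_N(P)=P$ because $h(\lambda_j/N)=1$ whenever $\lambda_j<N/2$; hence $f-\sigma_N(f)=(f-P)-\sigma_N(f-P)$, and \eqref{sigmaopbd} gives $\|f-\sigma_N(f)\|_p\le(1+c)\|f-P\|_p$. Taking the infimum over $P\in\Pi_{N/2}$ closes the argument.

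For (b), I would Littlewood--Paley decompose $1-h$. Setting $\psi(t):=h(t/2)-h(t)$ gives a smooth even function supported on $\{1/2\le|t|\le 2\}$ satisfying $1-h(t)=\sum_{k\ge 0}\psi(t/2^k)$. Writing $g:=\Delta^rf$ and $H_k(u):=\psi(u)/(2^kNu+1)^r$, a direct coefficient computation yields the formal identity $f-\sigma_N(f)=\sum_{k\ge 0}\sigma_{2^kN}(H_k;g)$. Since the support of $\psi$ is bounded away from $0$, the factor $(2^kNu+1)^r$ is smooth and comparable to $(2^kN)^r$ on that support, and differentiation gives $\||H_k|\|_S\le c(2^kN)^{-r}$. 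Corollary~\ref{kerncor} then bounds the $k$th summand by $c(2^kN)^{-r}\|g\|_p$; summing the resulting geometric series (which converges because $r>0$) gives \eqref{polyfavard}.

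For (c), I would represent $\Delta^r|_{\Pi_N}$ by a kernel operator at scale $2N$. Since $h(\lambda_j/(2N))=1$ on the support of $\hat P$ for $P\in\Pi_N$, one has $P=\sigma_{2N}(P)$, and therefore $\Delta^rP=\sigma_{2N}(H_r;P)$ for any even smooth $H_r$ with $H_r(\lambda_j/(2N))=(\lambda_j+1)^r$ whenever $\lambda_j<N$. A candidate is $H_r(u)=h(u)(4N^2u^2+1)^{r/2}$, possibly corrected by a bounded smooth multiplier to reconcile $(\lambda_j^2+1)^{r/2}$ with $(\lambda_j+1)^r$; careful chain-rule bookkeeping then yields $\||H_r|\|_S\le cN^r$, and Corollary~\ref{kerncor} delivers \eqref{pseudobern}.

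Finally, (d) combines the three. The left inequality in \eqref{jacksonest} follows by taking $f_1:=\sigma_N(f)$ in the infimum defining $\omega_r(p;f,1/N)$. For the right, given an arbitrary $f_1\in W_r^p$, I would estimate $\|f-\sigma_N(f)\|_p\le\|f-f_1\|_p+\|\sigma_N(f-f_1)\|_p+\|f_1-\sigma_N(f_1)\|_p$, controlling the middle term by \eqref{sigmaopbd} and the last by (b). Using linearity and the commutativity $\sigma_N\Delta^r=\Delta^r\sigma_N$ on $W_r^p$, I would split $\Delta^r\sigma_N(f)=\sigma_N(\Delta^rf_1)+\Delta^r\sigma_N(f-f_1)$ and apply \eqref{sigmaopbd} to the first summand and (c) to the second to obtain $N^{-r}\|\Delta^r\sigma_N(f)\|_p\le c(\|f-f_1\|_p+N^{-r}\|\Delta^rf_1\|_p)$. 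Adding the two estimates and infimizing over $f_1$ yields \eqref{jacksonest}. I expect the main obstacle to be (c): the natural symbol $(2Nu+1)^r$ is neither even nor smooth across $u=-1/(2N)$ for non-integer $r$, so constructing an even $N$-uniformly-smooth surrogate whose $S$-th derivative is bounded by the sharp factor $cN^r$ (and not some larger power of $N$) requires a delicate balance between the cutoff scale of $h$ and the fractional-power singularity.
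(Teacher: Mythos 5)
Your arguments for (a) and (d) coincide with the paper's own proof: for (a) the paper picks a near-best $P\in\Pi_{N/2}$, uses $\sigma_N(P)=P$ and \eqref{sigmaopbd}; for (d) it picks a near-minimizer $f_1$ in the $K$-functional and runs exactly your splitting, using \eqref{sigmaopbd}, \eqref{polyfavard}, \eqref{pseudobern} and the commutation $\Delta^r\sigma_N=\sigma_N\Delta^r$. The paper does \emph{not} prove (b) and (c); it cites them from \cite[Theorem~6.1]{mauropap}, so on those parts you are attempting more than the text does. Your Littlewood--Paley argument for (b) is the standard one and is sound once you evenize the symbol, i.e.\ replace $(2^kNu+1)^r$ by $(2^kN|u|+1)^r$: this is smooth and even because $\mathrm{supp}\,\psi$ avoids the origin, only the values at $u=\lambda_j/(2^kN)\ge 0$ matter, and the derivative bounds $\||H_k|\|_S\le c(2^kN)^{-r}$ are uniform in $k$, so the geometric series closes.

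Part (c) is where the genuine gap lies, and it is not quite where you locate it. The point $u=-1/(2N)$ is harmless (evenization removes it); the fatal point is $u=0$. An even function that is differentiable at the origin must have vanishing derivative there, whereas a symbol $H$ with $H(\lambda_j/(2N))=(\lambda_j+1)^r$ for all eigenvalues $\lambda_j<N$ --- and the $\lambda_j$ are not at your disposal, so $H$ must in effect equal $(2Nu+1)^r$ on $[0,1/2]$ --- has right derivative $2Nr\neq 0$ at $0$; your proposed corrector $\bigl((2N|u|+1)^2/(4N^2u^2+1)\bigr)^{r/2}$ has exactly the same defect. Worse, even if one drops evenness, the $k$-th derivative of $(2Nu+1)^r$ near $u=0$ is of order $N^{k}$, so $\||H_r|\|_S$ would be of order $N^{S}$ rather than the sharp $N^{r}$, which ruins \eqref{pseudobern}. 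The repair is to reuse the dyadic decomposition from your proof of (b), now applied to $P$ itself: write $P=\sum_{j\le m}\tau_j(P)$ with $2^m\sim N$, represent $\Delta^r\tau_j(P)=\sigma_{2^{j+1}}(\tilde H_j;\tau_j(P))$ with $\tilde H_j(u)=\tilde G(u)(2^{j+1}|u|+1)^{r}$ for an annulus-supported $\tilde G$ (there all derivatives are $O(2^{jr})$ uniformly), bound each block by $c2^{jr}\|\tau_j(P)\|_p\le c2^{jr}\|P\|_p$ via Corollary~\ref{kerncor} and \eqref{sigmaopbd}, and sum the geometric series; the lowest block lives at scale $O(1)$ and needs a separate but $N$-independent treatment. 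This is in substance the argument behind the citation to \cite{mauropap}.
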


\begin{proof}\ % of Proposition~\ref{approxprop}.
If $P\in\Pi_{N/2}$ is chosen so that $\|f-P\|_p\le 2\mathsf{dist}(p;f,\Pi_{N/2})$, then \eqref{sigmaopbd} implies that
$$
\|f-\sigma_N(f)\|_p =\|f-P-\sigma_N(f-P)\|_p \le c\|f-P\|_p\le c\mathsf{dist}(p;f,\Pi_{N/2}).
$$
This proves part (a). 

 The   parts (b) and (c) are proved in \cite[Theorem~6.1]{mauropap}.

Next, let $f_1$ be chosen so that $\|f-f_1\|_p+N^{-r}\|\Delta^rf_1\|_p \le 2\omega_r(p;f,1/N)$. Then using \eqref{sigmaopbd}, \eqref{polyfavard}, and \eqref{pseudobern}, we deduce that
\begin{eqnarray*}
\lefteqn{\|f-\sigma_N(f)\|_p +N^{-r}\|\Delta^r\sigma_N(f)\|_p}\\
& \le& \|f-f_1-\sigma_N(f-f_1)\|_p +\|f_1-\sigma_N(f_1)\|_p\\
&& \qquad\qquad +N^{-r}\left(\|\Delta^r\sigma_N(f-f_1)\|_p +\|\Delta^r\sigma_N(f_1)\|_p\right)\\
&\le& c\{\|f-f_1\|_p +N^{-r}\|\Delta^rf_1\|_p + \|\sigma_N(f-f_1)\|_p +N^{-r}\|\sigma_N(\Delta^rf_1)\|_p\}\\
&\le& c\{\|f-f_1\|_p +N^{-r}\|\Delta^rf_1\|_p\}\le c\omega_r(p;f,1/N).
\end{eqnarray*}
This proves \eqref{jacksonest}. 
\end{proof}

We note next a corollary of this proposition.

\begin{corollary}\label{sigmahnormcor}
Let $r>\gamma>0$, $\delta\in (0,1]$, $1\le p \le \infty$, $f\in X^p$, and $n\ge 1$. Then
\begin{equation}\label{sigmahnormineq}
\omega_r(p; \sigma_n(f), \delta) \le c\omega_r(p;f,\delta)\, \qquad\|\sigma_n(f)\|_{H_\gamma^p}\le c\|f\|_{H_\gamma^p}.
\end{equation}
\end{corollary}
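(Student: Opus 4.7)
The plan is to prove both inequalities by transferring a near-optimal decomposition for $f$ through the operator $\sigma_n$.

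First, fix $\delta \in (0,1]$ and $\varepsilon > 0$, and select $f_1 \in W^p_r$ with $\|f - f_1\|_p + \delta^r \|\Delta^r f_1\|_p \le \omega_r(p;f,\delta) + \varepsilon$. I would propose $\sigma_n(f_1)$ as the competitor for $\sigma_n(f)$ in the definition of the $K$-functional. Since the fixed mask $h$ is supported in $[-1,1]$, $\sigma_n(f_1)$ lies in $\Pi_n$ and hence in $W^p_r$ for every $r > 0$, so it is an admissible competitor.

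The key identity is that $\sigma_n$ commutes with $\Delta^r$ when applied to any $f_1 \in W^p_r$: both operators act diagonally on the system $\{\phi_j\}$, the former multiplying the $j$-th coefficient by $h(\lambda_j/n)$ and the latter by $(\lambda_j+1)^r$, and $\sigma_n(f_1)$ is a finite linear combination of eigenfunctions so all manipulations are classical. Combined with the $L^p$-boundedness of $\sigma_n$ from \eqref{sigmaopbd} (Corollary~\ref{kerncor}), this gives
\begin{equation*}
\|\sigma_n(f) - \sigma_n(f_1)\|_p = \|\sigma_n(f - f_1)\|_p \le c \|f - f_1\|_p,
\end{equation*}
\begin{equation*}
\|\Delta^r \sigma_n(f_1)\|_p = \|\sigma_n(\Delta^r f_1)\|_p \le c \|\Delta^r f_1\|_p.
\end{equation*}
Adding the first inequality to $\delta^r$ times the second, then letting $\varepsilon \to 0^+$ after taking the infimum of the right-hand side over $f_1$, yields the first inequality $\omega_r(p; \sigma_n(f), \delta) \le c\omega_r(p;f,\delta)$.

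For the second inequality, once the first is in hand, I would divide by $\delta^\gamma$, take the supremum over $\delta \in (0,1]$, and add the bound $\|\sigma_n(f)\|_p \le c \|f\|_p$ obtained directly from \eqref{sigmaopbd}. The definition of $\|\cdot\|_{H_\gamma^p}$ in \eqref{hpgammadef} then yields $\|\sigma_n(f)\|_{H_\gamma^p} \le c\|f\|_{H_\gamma^p}$. The only step requiring more than a routine verification is the commutation $\Delta^r \sigma_n(f_1) = \sigma_n(\Delta^r f_1)$, which is potentially subtle given that $\Delta^r$ is defined distributionally; however, because $\sigma_n(f_1) \in \Pi_n$ is a finite diffusion polynomial, the identity is immediate at the level of the coefficients against $\{\phi_j\}$. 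I do not foresee any serious obstacle.
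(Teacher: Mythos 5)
Your proof is correct, and it takes a genuinely different (and more elementary) route than the paper's. You work directly from the definition \eqref{kfuncdef} of the $K$-functional: take a near-minimizer $f_1\in W^p_r$ for $f$, push it through $\sigma_n$, and use only the $L^p$-boundedness \eqref{sigmaopbd} together with the commutation $\Delta^r\sigma_n(f_1)=\sigma_n(\Delta^r f_1)$ (which, as you note, is immediate on coefficients because $\sigma_n(f_1)\in\Pi_n$ is a finite diffusion polynomial and $\widehat{\sigma_n(g)}(k)=h(\lambda_k/n)\hat g(k)$). The paper instead picks $N$ with $1/(2N)<\delta\le 1/N$ and invokes the two-sided realization \eqref{jacksonest} of the $K$-functional via $\sigma_N$, using the commutation $\sigma_N\sigma_n=\sigma_n\sigma_N$ and the doubling property $\omega_r(p;f,1/N)\le c\,\omega_r(p;f,1/(2N))$; this leans on machinery already established in Proposition~\ref{approxprop}(d) but is otherwise the same in spirit. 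What your argument buys is independence from \eqref{jacksonest}: it is the standard abstract fact that any operator bounded on $L^p$ and commuting with $\Delta^r$ is automatically bounded on the $K$-functional, hence on $H^p_\gamma$ via \eqref{hpgammadef}. What the paper's version buys is that it recycles an equivalence it needs anyway elsewhere. Your handling of the second inequality (divide by $\delta^\gamma$, take the supremum, add $\|\sigma_n(f)\|_p\le c\|f\|_p$) matches the paper's and is fine, since the same exponent $r>\gamma$ may be used in \eqref{hpgammadef} by Proposition~\ref{equivprop}.
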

\begin{proof}\ % of Corollary~\ref{sigmahnormcor}
Let $N\ge 1$ be chosen such that $1/(2N)< \delta\le 1/N$. A comparison of the Fourier coefficients shows that
$$
\sigma_N(\sigma_n(f))=\sigma_n(\sigma_N(f)), \qquad \Delta^r(\sigma_N(\sigma_n(f)))=\sigma_n(\Delta^r(\sigma_N(f))).
$$
Consequently, using \eqref{jacksonest}, we conclude that
\begin{eqnarray*}
\omega_r(p;\sigma_n(f),\delta)&\le& \omega_r(p;\sigma_n(f),1/N)\\
&\le&  \|\sigma_n(f)-\sigma_N(\sigma_n(f))\|_p +\frac{1}{N^r}\|\Delta^r(\sigma_N(\sigma_n(f)))\|_p\\
&=& \|\sigma_n(f)-\sigma_n(\sigma_N(f))\|_p +\frac{1}{N^r}\|\sigma_n(\Delta^r(\sigma_N(f)))\|_p\\
&\le& c\{\|f-\sigma_N(f)\|_p +\frac{1}{N^r}\|\Delta^r(\sigma_N(f))\|_p\}\\
&\le& c\omega_r(p;f,1/N) \le c_1\omega_r(p;f, 1/(2N))\le c_1\omega_r(p;f,\delta).
\end{eqnarray*}
This proves the first inequality in \eqref{sigmahnormineq}. The second inequality is now immediate from the definitions.
\end{proof}

Next, we state another fundamental result, that characterizes the space $H_\gamma^p$ in terms of a series expansion of the functions.
In the sequel, we will write for $f\in X^1\cap X^\infty$, $x\in\mathbb{X}$,
$$
\tau_j(f,x)=\left\{\begin{array}{ll}
\sigma_1(f,x), &\mbox{ if $j=0$,}\\
\sigma_{2^j}(f,x)-\sigma_{2^{j-1}}(f,x), &\mbox{ if $j=1,2,\cdots$.}
\end{array}\right.
$$
The following lemma summarizes some relevant properties of these operators.

\begin{lemma}\label{taulemma}
Let $1\le p\le\infty$,  $f\in X^p$.\\
{\rm (a)} We have
\begin{equation}\label{lpseries}
f=\sum_{j=0}^\infty \tau_j(f),
\end{equation}
with convergence in the sense of $L^p$.\\
{\rm (b)} For each $j=2,3,\cdots$,
\begin{equation}\label{taubasicest}
\|\tau_j(f)\|_p\le c\mathsf{dist}(p;f,\Pi_{2^{j-2}})\le c\sum_{k=j-1}^\infty \|\tau_k(f)\|_p.
\end{equation}
In particular, if $\gamma>0$ and $f\in H_\gamma^p$, then
\begin{equation}\label{tausobest}
\|f\|_p + \sup_{j\ge 0} 2^{j\gamma}\|\tau_j(f)\|_p \sim \|f\|_{H_\gamma^p}.
\end{equation}
{\rm (c)} If $j\ge 2$, $d>0$, and $\nu\in \mathcal{R}_d$ then
\begin{equation}\label{taunuest}
\|\tau_j(f)\|_{\nu;1}\le (1+(2^jd)^q)^{1/p}|\!|\!|\nu|\!|\!|_{R,d}^{1/p}(|\nu|(\mathbb{X}))^{1/p'}\|\tau_j(f)\|_p.
\end{equation}
\end{lemma}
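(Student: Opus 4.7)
The plan is to exploit the telescoping identity $\sum_{k=0}^n \tau_k(f) = \sigma_{2^n}(f)$, which is immediate from the definition of $\tau_j$. Because $f \in X^p$ is in the $L^p$-closure of the diffusion polynomials, $\mathsf{dist}(p;f,\Pi_N) \to 0$ as $N \to \infty$, and then Proposition~\ref{approxprop}(a) forces $\|f - \sigma_{2^n}(f)\|_p \to 0$. This gives \eqref{lpseries}.

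\textbf{Part (b).} For the first inequality in \eqref{taubasicest}, I would use that $h \equiv 1$ on $[-1/2, 1/2]$: for any $P \in \Pi_{2^{j-2}}$ the multipliers $h(\lambda_k/2^{j-1})$ and $h(\lambda_k/2^j)$ both equal $1$ on the Fourier support of $P$, so $\sigma_{2^{j-1}}(P) = \sigma_{2^j}(P) = P$ and hence $\tau_j(P)=0$. Writing $\tau_j(f) = \tau_j(f-P)$ and applying the uniform $L^p$-bound \eqref{sigmaopbd} yields $\|\tau_j(f)\|_p \le c\|f-P\|_p$; take the infimum over $P \in \Pi_{2^{j-2}}$. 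For the second inequality, note $\sigma_{2^{j-2}}(f) \in \Pi_{2^{j-2}}$, and by part (a) we have $f - \sigma_{2^{j-2}}(f) = \sum_{k \ge j-1} \tau_k(f)$ in $L^p$, so $\mathsf{dist}(p;f,\Pi_{2^{j-2}}) \le \|f-\sigma_{2^{j-2}}(f)\|_p \le \sum_{k\ge j-1}\|\tau_k(f)\|_p$ by the triangle inequality. The equivalence \eqref{tausobest} then follows by combining this sandwich with Proposition~\ref{equivprop}: the forward direction uses Proposition~\ref{equivprop} and the first inequality; the reverse direction uses the second inequality plus summation of a geometric series (the uniform bound $\sup_j 2^{j\gamma}\|\tau_j(f)\|_p \le M$ yields $\mathsf{dist}(p;f,\Pi_{2^{j-2}}) \le cM\,2^{-j\gamma}$, hence $n^\gamma\mathsf{dist}(p;f,\Pi_n)\le cM$ uniformly in $n$ by choosing $j$ with $2^{j-2} \le n$).

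\textbf{Part (c).} Set $P := \tau_j(f)$, which lies in $\Pi_{2^j}$, and choose $N := 2^{j+1}$. Since $\lambda_k < 2^j = N/2$ throughout the Fourier support of $P$, we have $P = \sigma_N(P)$, i.e., $P(y) = \int_\mathbb{X} P(x)\Phi_N(y,x)\,d\mu^*(x)$. Integrating the modulus against $|\nu|$, applying Fubini--Tonelli, and using the symmetry $\Phi_N(y,x) = \Phi_N(x,y)$, Hölder's inequality with conjugate exponents $p, p'$ gives
\[
\|P\|_{\nu;1} \le \|P\|_p \cdot \left\|\int_\mathbb{X} |\Phi_N(\circ,y)|\,d|\nu|(y)\right\|_{p'}.
\]
The key step is to apply \eqref{phiintnorm} with $p'$ in place of $p$ (so $(p')' = p$), which bounds the second factor by $c(1+(dN)^q)^{1/p} |\!|\!|\nu|\!|\!|_{R,d}^{1/p} (|\nu|(\mathbb{X}))^{1/p'}$. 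Since $(1+(2^{j+1}d)^q) \le 2^q(1+(2^jd)^q)$, absorbing constants yields exactly \eqref{taunuest}.

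\textbf{Main obstacle.} Parts (a) and (b) are essentially bookkeeping on top of Propositions~\ref{approxprop} and~\ref{equivprop}. The delicate point is in (c): the quoted inequality \eqref{phiintnorm} must be applied with $p$ and $p'$ interchanged; using it as stated would invert the powers on $|\!|\!|\nu|\!|\!|_{R,d}$ and $|\nu|(\mathbb{X})$ and produce the wrong bound. One must also be careful that the polynomial degree $2^j$ forces the choice $N = 2^{j+1}$ (not $N = 2^j$), so that $h$ still equals $1$ on the Fourier support of $P$.
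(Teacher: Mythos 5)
Your proof is correct and follows essentially the same route as the paper: parts (a) and (b) rest on \eqref{goodapprox}, \eqref{sigmaopbd} and Proposition~\ref{equivprop}, and part (c) is the reproducing-kernel/H\"older argument with \eqref{phiintnorm} applied at the exponent $p'$ (with $(p')'=p$), exactly as in the paper's proof. The only cosmetic difference is in (c): the paper reproduces $\tau_j(f)$ at scale $2^j$ via the band-pass multiplier $\tilde{G}(t)=h(t/2)-h(4t)$, whereas you reproduce it at scale $2^{j+1}$ via $h$ itself; this only changes the factor $(1+(2^jd)^q)^{1/p}$ by a harmless $2^{q/p}$ absorbed into the constant.
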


\begin{proof}\ %of Lemma~\ref{taulemma}
Part (a) is an immediate consequence of \eqref{goodapprox}. Since $\Pi_{2^{j-2}}\subset \Pi_{2^{j-1}}$, \eqref{goodapprox} implies
$$
\|\tau_j(f)\|_p \le \|f-\sigma_{2^j}(f)\|_p+\|f-\sigma_{2^{j-1}}(f)\|_p\le c\mathsf{dist}(p;f,\Pi_{2^{j-2}}).
$$
This proves the first estimate in \eqref{taubasicest}. The second follows from \eqref{lpseries}. 
 The estimate \eqref{tausobest}  can be derived easily using \eqref{taubasicest} and Proposition~\ref{equivprop}. This completes the proof of part (b).

Next, we prove part (c). In this proof only, let 
$$
\tilde{G}(t)=h(t/2)-h(4t), \qquad g(t)=h(t)-h(2t).
$$
Then $g$ is supported on $[1/4,1]$, while
$$
\tilde{G}(t)=\left\{\begin{array}{ll}
0, &\mbox{ if $0\le t\le 1/8$,}\\
1, &\mbox{ if $1/4\le t\le 1$,}\\
0, &\mbox{ if $t\ge 2$.}
\end{array}\right.
$$
Therefore, it is easy to verify that $\tilde{G}(t)g(t)=g(t)$ for all $t$, $\tau_j(f)=\sigma_{2^j}(g;f)$, and hence, for all $f\in L^1$, $x\in\mathbb{X}$,
$$
\tau_j(f,x)=\int_\mathbb{X} \tau_j(f,y)\Phi_{2^j}(\tilde{G};x,y)d\mu^*(y).
$$
Using H\"older inequality followed by \eqref{phiintnorm} with $p'$ in place of $p$ and $\tilde{G}$ in place of $H$, we obtain that
\begin{eqnarray*}
\int_\mathbb{X} |\tau_j(f,x)|d|\nu|(x)&\le&\int_\mathbb{X} \int_\mathbb{X}|\Phi_{2^j}(\tilde{G};x,y)|d|\nu|(x)||\tau_j(f,y)|d\mu^*(y)\\
&\le &\left\|\int_\mathbb{X}|\Phi_{2^j}(\tilde{G};x,\circ)|d|\nu|(x)\right\|_{p'}\|\tau_j(f)\|_p\\
&\le& c\{(1+(d{2^j})^q)\}^{1/p}|\!|\!|\nu|\!|\!|_{R,d}^{1/p}(|\nu|(\mathbb{X}))^{1/p'}\|\tau_j(f)\|_p.
\end{eqnarray*}
This proves \eqref{taunuest}.
\end{proof}

We will use the following corollary of this lemma in our proofs.

\begin{corollary}\label{polyspaceequivcor}
If $n\ge 1$, $0<\gamma<r$, $P\in\Pi_n$, then 
\begin{equation}\label{polyspaceequiv1}
\sup_{\delta\in (0,1]}\frac{\omega_r(p;P,\delta)}{\delta^r}.\sim 
\|\Delta^r P\|_p\le cn^{r-\gamma}\|P\|_{H_\gamma^p}.
\end{equation}
Further,
\begin{equation}\label{polyspaceequiv}
\|P\|_{H^p_\gamma}\le c\{\|P\|_p+ \|\Delta^\gamma P\|_p \}\le cn^\gamma \|P\|_p.
\end{equation}
\end{corollary}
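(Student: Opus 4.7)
The plan is to derive both chains of inequalities from the Jackson-type estimate \eqref{jacksonest}, the Favard estimate \eqref{polyfavard}, the Bernstein-type inequality \eqref{pseudobern}, and the characterization of $\|\cdot\|_{H^p_\gamma}$ supplied by Proposition~\ref{equivprop}. The key observation that drives the argument is that since $h\equiv 1$ on $[0,1/2]$, one has $\sigma_N(P)=P$ for every $P\in\Pi_n$ and every $N\ge 2n$, so the $K$-functional at the scale $\delta=1/(2n)$ can be pinned down exactly.

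For the equivalence $\sup_{\delta\in(0,1]}\omega_r(p;P,\delta)/\delta^r\sim\|\Delta^rP\|_p$, the inequality ``$\le$'' is immediate: taking $f_1=P\in W^p_r$ in the definition \eqref{kfuncdef} gives $\omega_r(p;P,\delta)\le\delta^r\|\Delta^rP\|_p$. For the reverse direction, I would plug $N=2n$ into the left half of \eqref{jacksonest}; since $\sigma_{2n}(P)=P$, this reads $(2n)^{-r}\|\Delta^rP\|_p\le c\,\omega_r(p;P,1/(2n))$, and as $1/(2n)\in(0,1]$ the right side is at most $(2n)^{-r}\sup_{\delta}\omega_r(p;P,\delta)/\delta^r$, whence $\|\Delta^rP\|_p\le c\sup_{\delta}\omega_r(p;P,\delta)/\delta^r$. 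The remaining upper bound $\|\Delta^rP\|_p\le cn^{r-\gamma}\|P\|_{H^p_\gamma}$ then follows by applying the definition \eqref{hpgammadef} at $\delta=1/(2n)$ to obtain $\omega_r(p;P,1/(2n))\le(2n)^{-\gamma}\|P\|_{H^p_\gamma}$ and combining with the just-established lower estimate.

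For the second chain, the right inequality $\|P\|_p+\|\Delta^\gamma P\|_p\le cn^\gamma\|P\|_p$ is immediate from the Bernstein inequality \eqref{pseudobern} used with $\gamma$ in place of $r$, together with the trivial $\|P\|_p\le n^\gamma\|P\|_p$. For the left inequality $\|P\|_{H^p_\gamma}\le c\{\|P\|_p+\|\Delta^\gamma P\|_p\}$, I would use Proposition~\ref{equivprop}: it suffices to control $\sup_{m>0}m^\gamma\mathsf{dist}(p;P,\Pi_m)$. For $m\ge n$ this quantity vanishes, while for $0<m<n$ the Favard estimate \eqref{polyfavard}, applied with $\gamma$ in place of $r$ (valid since $P\in W^p_\gamma$), gives $\mathsf{dist}(p;P,\Pi_m)\le\|P-\sigma_m(P)\|_p\le cm^{-\gamma}\|\Delta^\gamma P\|_p$, so $m^\gamma\mathsf{dist}(p;P,\Pi_m)\le c\|\Delta^\gamma P\|_p$ uniformly in $m$.

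I do not anticipate a real obstacle: the corollary compactly packages three standard polynomial-type facts (Jackson, Favard, Bernstein) specialized to $\Pi_n$, and the whole argument reduces to plugging $P\in\Pi_n$ and the distinguished scale $\delta=1/(2n)$ into Proposition~\ref{approxprop} and Proposition~\ref{equivprop}. The only mildly delicate point is the observation $\sigma_{2n}(P)=P$, which upgrades \eqref{jacksonest} from a $K$-functional estimate to a two-sided comparison for $\|\Delta^rP\|_p$ alone, and which must be invoked at exactly the right scale so that $\delta=1/(2n)$ lies in the admissible range $(0,1]$ for $n\ge 1$.
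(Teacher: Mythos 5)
Your proposal is correct, and for two of the three claims it coincides with the paper's argument: the two-sided comparison $\sup_{\delta}\omega_r(p;P,\delta)/\delta^r\sim\|\Delta^rP\|_p$ is obtained in both cases from the observation $\sigma_{2n}(P)=P$ fed into \eqref{jacksonest} at scale $\delta=1/(2n)$ (one direction) and from taking $f_1=P$ in \eqref{kfuncdef} (the other), and the chain \eqref{polyspaceequiv} is handled identically via Proposition~\ref{equivprop}, the Favard estimate \eqref{polyfavard} with $\gamma$ in place of $r$ for $m<n$, and \eqref{pseudobern}. Where you diverge is the middle inequality $\|\Delta^rP\|_p\le cn^{r-\gamma}\|P\|_{H^p_\gamma}$: the paper routes this through the dyadic decomposition $P=\sum_{j\le m+2}\tau_j(P)$, applying \eqref{pseudobern} blockwise and summing against \eqref{tausobest}, whereas you simply chain the already-established bound $(2n)^{-r}\|\Delta^rP\|_p\le c\,\omega_r(p;P,1/(2n))$ with the definition \eqref{hpgammadef}, which gives $\omega_r(p;P,1/(2n))\le(2n)^{-\gamma}\|P\|_{H^p_\gamma}$ directly. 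Your version is shorter and avoids Lemma~\ref{taulemma} entirely; the only point to make explicit is that the $H^p_\gamma$ norm may a priori be defined with a different smoothness index $r'>\gamma$ than the $r$ in the statement, so you should invoke the norm equivalence noted after Proposition~\ref{equivprop} to justify using the given $r$ in \eqref{hpgammadef}. With that caveat recorded, the argument is complete.
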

\begin{proof}\ % of Corollary~\ref{polyspaceequivcor}
In view of the fact that $\sigma_{2n}(P)=P$, we conclude from \eqref{jacksonest} used with $2n$ in place of $N$ that
$$
\frac{1}{(2n)^r}\|\Delta^r P\|_p=\|P-\sigma_{2n}(P)\|_p+\frac{1}{(2n)^r}\|\Delta^r\sigma_{2n}(P)\|_p \le c\omega_r(p;P,1/(2n)).
$$
This shows that 
$$
\|\Delta^r P\|_p \le c\sup_{\delta\in (0,1]}\frac{\omega_r(p;P,\delta)}{\delta^r}.
$$
The estimate in \eqref{polyspaceequiv1} in the other direction follows from the definition of $\omega_r(p;P,\delta)$.

Let $m$ be an integer, $2^m \le n<2^{m+1}$. Since the expansion for $P$ as given in \eqref{lpseries} is only a finite sum,  
we see that
$$
\|\Delta^r P\|_p=\left\|\sum_j \Delta^r \tau_j(P)\right\|_p =\left\|\sum_{j=0}^{m+2} \Delta^r\tau_j(P)\right\|_p\le \sum_{j=0}^{m+2}\|\Delta^r\tau_j(P)\|_p.
$$
Hence, using \eqref{pseudobern} and \eqref{tausobest}, we deduce that
$$
\|\Delta^r P\|_p \le c\sum_{j=0}^{m+2}2^{j(r-\gamma)}2^{j\gamma}\|\tau_j(P)\|_p
 \le c 2^{m(r-\gamma)}\|P\|_{H_\gamma^p}.
$$
This implies the last estimate in \eqref{polyspaceequiv1}.

If $N\ge n$ then $\mathsf{dist}(p, P,\Pi_N)=0$. If $N <n$, then the estimate \eqref{polyfavard}  yields
$$
\mathsf{dist}(p, P,\Pi_N)\le cN^{-\gamma}\|\Delta^\gamma P\|.
$$
Hence, Proposition~\ref{equivprop} shows that
$$
\|P\|_{H_\gamma^p} \sim \|P\|_p + \sup_{N\ge 1}N^\gamma \mathsf{dist}(p, P,\Pi_N) \le c\{\|P\|_p+\|\Delta^\gamma P\|\}.
$$
This proves the first estimate in \eqref{polyspaceequiv}; the second follows from \eqref{pseudobern}.
\end{proof}

Next, we recall   yet another preparatory lemma. The following lemma is proved in \cite[Lemma~5.4]{heatkernframe}. (In this lemma, the statement \eqref{prodapprox} is stated only for $p=\infty$, but the statement below follows since $\mu^*$ is a probability measure.)

\begin{lemma}\label{nikollemma}
Let $N\ge 1$, $P\in \Pi_N$, $0<p_1\le p_2\le \infty$. Then 
\begin{equation}\label{nikolskii}
\|P\|_{p_2}\le cN^{q(1/p_1-1/p_2)}\|P\|_{p_1}.
\end{equation}
 Further, let the product assumption hold,  $P_1, P_2\in\Pi_N$, $1\le p, p_1,p_2\le\infty$, and $R>0$ be arbitrary. Then there exists $Q\in \Pi_{A^*N}$ such that
\begin{equation}\label{prodapprox}
\|P_1P_2-Q\|_p \le c(R)N^{-R}\|P_1\|_{p_1}\|P_2\|_{p_2}.
\end{equation}
\end{lemma}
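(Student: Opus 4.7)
The lemma has two parts, requiring different techniques; the common thread is the use of localization properties of the kernels $\Phi_N(H;x,y)$ from Proposition~\ref{criticalprop}, together with the spectral bound \eqref{christbd} that is available in a data--defined space.

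For the Nikolskii inequality \eqref{nikolskii}, the plan is a standard reproducing-kernel plus interpolation argument. Let $H$ be a fixed, even, $C^\infty$ function with $H(t)=1$ for $|t|\le 1$ and $H(t)=0$ for $|t|\ge 2$ (for instance $H(t)=h(t/2)$). Then for any $P\in\Pi_N$ every index $j$ in the expansion of $P$ satisfies $H(\lambda_j/N)=1$, so $P(x)=\int_\mathbb{X} P(y)\Phi_N(H;x,y)\,d\mu^*(y)$. Applying H\"older's inequality with exponents $p_1,p_1'$ and the bound \eqref{philpnorm} from Proposition~\ref{criticalprop} (with $\nu=\mu^*$, $d=1/N$, and $p_1'$ in place of $p$) yields $\|P\|_\infty\le cN^{q/p_1}\|P\|_{p_1}$. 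The general case $1\le p_1\le p_2\le\infty$ then follows from the convexity inequality $\|P\|_{p_2}\le\|P\|_\infty^{1-p_1/p_2}\|P\|_{p_1}^{p_1/p_2}$, since $\mu^*$ is a probability measure.

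For the product estimate \eqref{prodapprox}, the plan is to expand and use the product assumption term by term. Write $P_1=\sum_{\lambda_j<N}a_j\phi_j$ and $P_2=\sum_{\lambda_k<N}b_k\phi_k$. By the definition of $\epsilon_{A^*,N}$ in \eqref{gammanormdef}, for each pair $(j,k)$ with $\lambda_j,\lambda_k<N$ there exists $Q_{j,k}\in\Pi_{A^*N}$ with $\|\phi_j\phi_k-Q_{j,k}\|_\infty\le\epsilon_{A^*,N}$. Setting $Q:=\sum_{j,k}a_jb_kQ_{j,k}\in\Pi_{A^*N}$ gives $\|P_1P_2-Q\|_\infty\le\epsilon_{A^*,N}\bigl(\sum_j|a_j|\bigr)\bigl(\sum_k|b_k|\bigr)$. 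To bound the coefficient sums, note that $a_j=\langle P_1,\phi_j\rangle$, so $|a_j|\le\|\phi_j\|_\infty\|P_1\|_1$; the spectral bound \eqref{christbd} applied pointwise yields $\|\phi_j\|_\infty\le cN^{q/2}$, while $\dim(\Pi_N)\le cN^q$ follows by integrating \eqref{christbd}. Thus $\sum_j|a_j|\le cN^{3q/2}\|P_1\|_1\le cN^{3q/2}\|P_1\|_{p_1}$, and similarly for $\sum_k|b_k|$. Combining these, $\|P_1P_2-Q\|_p\le\|P_1P_2-Q\|_\infty\le c\epsilon_{A^*,N}N^{3q}\|P_1\|_{p_1}\|P_2\|_{p_2}$. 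The product assumption guarantees that $N^{R+3q}\epsilon_{A^*,N}\to 0$ for every $R$, so $\epsilon_{A^*,N}N^{3q}\le c(R)N^{-R}$, completing the proof.

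The only mild obstacle is keeping track of the polynomial factors $N^{3q/2}$ etc.\ in the crude $\ell^1$ bound on the coefficients; these are harmless precisely because the product assumption is phrased with super-polynomial decay (``for every $R$''), which is why that assumption takes the form it does. Everything else is an assembly of tools already proved in Section~\ref{prepsect}: the kernel localization of Theorem~\ref{kernloctheo}, the integral bounds in Proposition~\ref{criticalprop}, and the spectral/dimension bound \eqref{christbd}.
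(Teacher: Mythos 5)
The paper does not actually prove this lemma: it is quoted from \cite[Lemma~5.4]{heatkernframe} with only the remark that the case $p<\infty$ of \eqref{prodapprox} follows from the case $p=\infty$ because $\mu^*$ is a probability measure. So there is no internal proof to compare against; your argument has to be judged on its own. On that basis it is essentially sound, and it follows the standard route one would expect (and that the cited reference takes): a reproducing kernel $H(t)=h(t/2)$ with $H\equiv 1$ on $[-1,1]$ so that $P=\int_\mathbb{X} P(y)\Phi_N(H;\cdot,y)\,d\mu^*(y)$ for $P\in\Pi_N$, the $L^{p_1'}$ bound \eqref{philpnorm} on $\Phi_N(H;x,\circ)$ to get $\|P\|_\infty\le cN^{q/p_1}\|P\|_{p_1}$, and interpolation with $\|P\|_{p_2}\le\|P\|_\infty^{1-p_1/p_2}\|P\|_{p_1}^{p_1/p_2}$. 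For \eqref{prodapprox}, expanding $P_1P_2$ over pairs $(\phi_j,\phi_k)$, invoking the product assumption pairwise, and absorbing the crude polynomial factors $N^{3q}$ (from $\|\phi_j\|_\infty\le cN^{q/2}$ and $\dim\Pi_N\le cN^q$, both via \eqref{christbd}) into the super-polynomial decay of $\epsilon_{A^*,N}$ is exactly the intended use of the ``for every $R$'' formulation of that assumption; this part is correct as written.

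The one genuine (if small) gap is in the range of exponents for \eqref{nikolskii}. The lemma asserts the inequality for $0<p_1\le p_2\le\infty$, but your derivation of $\|P\|_\infty\le cN^{q/p_1}\|P\|_{p_1}$ uses H\"older with conjugate exponents $p_1,p_1'$, which requires $p_1\ge 1$, and you then state the conclusion only for $1\le p_1$. To cover $0<p_1<1$ you need the usual self-improvement step: from the case $p_1=1$ one has $\|P\|_\infty\le cN^{q}\|P\|_1\le cN^{q}\|P\|_\infty^{1-p_1}\|P\|_{p_1}^{p_1}$, whence $\|P\|_\infty^{p_1}\le cN^{q}\|P\|_{p_1}^{p_1}$ and $\|P\|_\infty\le c^{1/p_1}N^{q/p_1}\|P\|_{p_1}$; the interpolation step you wrote is already valid for all $0<p_1\le p_2$. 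With that one line added, the first part is complete. (A cosmetic point: in \eqref{prodapprox} you reduce $\|P_1\|_1\le\|P_1\|_{p_1}$ using $p_1\ge1$ and $\mu^*(\mathbb{X})=1$, which is fine since that part of the lemma restricts to $1\le p_1,p_2\le\infty$.)
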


The following embedding theorem is a simple consequence of the results stated so far.
\begin{lemma}\label{embeddinglemma}
{\rm (a)} Let $1\le p_1<p_2\le\infty$, $\gamma>q(1/p_1-1/p_2)$, $f\in H_\gamma^{p_1}$. Then
\begin{equation}\label{nikembedineq}
 \|f\|_{H_{\gamma-q(1/p_1-1/p_2)}^{p_2}} \le c\|f\|_{H_{\gamma}^{p_1}}.
\end{equation}
{\rm (b)} Let $1\le p<\infty$, $\gamma>q/p$, and $f\in H^p_\gamma$. Then $f\in H^\infty_{\gamma-q/p}$ ( $f\in X^\infty$ in particular), and
\begin{equation}\label{contembedding}
\|f\|_{H_{\gamma-q/p}^\infty} \le c\|f\|_{H_\gamma^p}.
\end{equation}
\end{lemma}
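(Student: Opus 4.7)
The plan is to reduce everything to the Littlewood--Paley style decomposition given by the $\tau_j$ operators and then apply the Nikolskii inequality termwise. Note that part (b) is just the special case $p_1=p$, $p_2=\infty$ of part (a), so I would prove (a) and deduce (b) as a corollary (using that, by definition, $f\in X^{p_2}$ for $p_2=\infty$ means $f$ is a uniform limit of diffusion polynomials, which justifies writing $f\in H^\infty_{\gamma-q/p}$).

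For (a), set $\gamma':=\gamma-q(1/p_1-1/p_2)>0$. First, I would write $f=\sum_{j\ge 0}\tau_j(f)$ with convergence in $L^{p_1}$ by Lemma~\ref{taulemma}(a), and use \eqref{tausobest} to get
\begin{equation*}
\|\tau_j(f)\|_{p_1}\le c\,2^{-j\gamma}\|f\|_{H^{p_1}_\gamma},\qquad j\ge 0.
\end{equation*}
Since each $\tau_j(f)\in\Pi_{2^j}$, the Nikolskii inequality \eqref{nikolskii} of Lemma~\ref{nikollemma} yields
\begin{equation*}
\|\tau_j(f)\|_{p_2}\le c\,2^{jq(1/p_1-1/p_2)}\|\tau_j(f)\|_{p_1}\le c\,2^{-j\gamma'}\|f\|_{H^{p_1}_\gamma}.
\end{equation*}

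Summing over $j$ and using $\gamma'>0$ gives $\sum_j \|\tau_j(f)\|_{p_2}\le c\|f\|_{H^{p_1}_\gamma}$, so the series $\sum_j\tau_j(f)$ converges in $L^{p_2}$ to some $g\in X^{p_2}$ with $\|g\|_{p_2}\le c\|f\|_{H^{p_1}_\gamma}$. Because $\mu^*$ is a probability measure, $L^{p_2}\hookrightarrow L^{p_1}$ and so convergence in $L^{p_2}$ forces convergence in $L^{p_1}$ to the same element; comparing with the known $L^{p_1}$-expansion $f=\sum_j\tau_j(f)$ yields $g=f$. In particular $f\in X^{p_2}$, and $\tau_j(f)$ has the same meaning whether computed via $L^{p_1}$ or $L^{p_2}$ (they arise from the same operator $\sigma_{2^j}$ applied to the same function). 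Hence
\begin{equation*}
\|f\|_{p_2}+\sup_{j\ge 0}2^{j\gamma'}\|\tau_j(f)\|_{p_2}\le c\|f\|_{H^{p_1}_\gamma},
\end{equation*}
and the characterization \eqref{tausobest} (with parameters $p_2,\gamma'$) upgrades the left side to $\|f\|_{H^{p_2}_{\gamma'}}$, giving \eqref{nikembedineq}.

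The main technical obstacle is the identification $g=f$; i.e., making sure we do not merely obtain \emph{some} element of $X^{p_2}$ that happens to agree with $f$ as a distribution, but really $f$ itself in $L^{p_2}$. As indicated above, this is handled by the observation that on a probability space $L^{p_2}\subseteq L^{p_1}$ with $\|\cdot\|_{p_1}\le\|\cdot\|_{p_2}$, so an $L^{p_2}$-limit is automatically an $L^{p_1}$-limit and uniqueness of limits forces equality with $f$. Everything else is routine application of the Littlewood--Paley characterization together with Nikolskii's inequality.
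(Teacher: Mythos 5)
Your proof is correct and follows essentially the same route as the paper: decompose $f$ into the dyadic blocks $\tau_j(f)\in\Pi_{2^j}$, apply the Nikolskii inequality \eqref{nikolskii} blockwise, sum the tail using $\gamma'>0$, and identify the $L^{p_2}$-limit with $f$ via the trivial embedding $L^{p_2}\hookrightarrow L^{p_1}$ on a probability space. The only (immaterial) difference is that you conclude via the Littlewood--Paley characterization \eqref{tausobest}, whereas the paper feeds the same two estimates ($\|f-\sigma_{2^n}(f)\|_{p_2}\le c2^{-n(\gamma-\alpha)}$ and $2^{-nr}\|\Delta^r\sigma_{2^n}(f)\|_{p_2}\le c2^{-n(\gamma-\alpha)}$) into the $K$-functional characterization \eqref{jacksonest}; these are equivalent formulations of the same argument.
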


\begin{proof}\ % of Lemma~\ref{embeddinglemma}.
In this proof only, we will write $\alpha=q(1/p_1-1/p_2)$. Let $n\ge 0$,  $f\in H_\gamma^{p_1}$, and $r>\gamma$.  Without loss of generality, we may assume that $\|f\|_{H_\gamma^{p_1}} =1$. In view of \eqref{jacksonest},
$$
\frac{1}{2^{nr}}\|\Delta^r\sigma_{2^n}(f)\|_{p_1}\le c2^{-n\gamma}.
$$
 Since $
\Delta^r\sigma_{2^n}(f)\in \Pi_{2^n}$, Lemma~\ref{nikollemma} shows that
\begin{equation}\label{pf2eqn1}
\frac{1}{2^{nr}}\|\Delta^r\sigma_{2^n}(f)\|_{p_2} \le \frac{2^{n\alpha}}{2^{nr}}\|\Delta^r\sigma_{2^n}(f)\|_{p_1}\le c2^{-n(\gamma-\alpha)}.
\end{equation}
Further, since each $\tau_j(f)\in\Pi_{2^j}$, we deduce from \eqref{nikolskii}, \eqref{tausobest}, and the fact that $\gamma>\alpha$, that 
\begin{equation}\label{pf2eqn2}
\sum_{j=n+1}^\infty \|\tau_j(f)\|_{p_2} \le c\sum_{j=n+1}^\infty 2^{j\alpha}\|\tau_j(f)\|_{p_1}\le c\sum_{j=n+1}^\infty 2^{-j(\gamma-\alpha)} =c2^{-n(\gamma-\alpha)}.
\end{equation}
Consequently, the series
$$
\sigma_{2^n}(f)+\sum_{j=n+1}^\infty \tau_j(f)
$$
converges in $L^{p_2}$, necessarily to $f$. Therefore, $f\in X^{p_2}$. Further, \eqref{pf2eqn2} shows that
$$
\|f-\sigma_{2^n}(f)\|_{p_2} \le c2^{-n(\gamma-\alpha)}.
$$
Together with \eqref{pf2eqn1} we have thus shown that
$$
\|f-\sigma_{2^n}(f)\|_{p_2}+\frac{1}{2^{nr}}\|\Delta^r\sigma_{2^n}(f)\|_{p_2}\le c2^{-n(\gamma-\alpha)}.
$$
In view of \eqref{jacksonest}, this proves \eqref{nikembedineq}. 

Part (b) is special case of part (a).
\end{proof}

%6
\section{Proofs of the Main Results}\label{pfsect}

We start with the proof of Theorem~\ref{maintheo}. This proof mimics that of \cite[Theorem~5]{besovquadpap}. However, while this theorem was proved in the case of the sphere for (exact) quadrature measures, the following theorem assumes only approximate quadratures and is, of course, valid for generic data-defined spaces.  \\

\noindent
\textit{Proof of Theorem~\ref{maintheo}.}\\

 Without loss of generality, we may assume in this proof that $\|f\|_{H_\gamma^p}=1$. 
Let $m\ge 0$ be an integer such that $2^m\le n<2^{m+1}$. Proposition~\ref{mzequivprop}(b) shows that 
$$
|\!|\!|\nu|\!|\!|_{R,2^{-m}}\sim |\!|\!|\nu|\!|\!|_{R,1/n}\sim |\!|\!|\nu|\!|\!|_{R,2^{-m-1}}\le c.
$$
Since $\gamma>q/p$, Lemma~\ref{embeddinglemma} shows that $f\in X^\infty$, so that Lemma~\ref{taulemma}(a) leads to
$$
f=\sigma_{2^m}(f)+\sum_{j=m+1}^\infty\tau_j(f),
$$
where the series converges uniformly. Hence, using \eqref{taunuest} with $d=2^{-m}$ and \eqref{tausobest}, we obtain
\begin{eqnarray}\label{pf4eqn1}
\left|\int_\mathbb{X} fd\nu -\int_\mathbb{X} \sigma_{2^m}(f)d\nu\right| &\le& \sum_{j=m+1}^\infty \left|\int_\mathbb{X} \tau_j(f)d\nu\right| \le \sum_{j=m+1}^\infty \|\tau_j(f)\|_{\nu;1}\nonumber\\
& \le& c|\!|\!|\nu|\!|\!|_{R,d}^{1/p}(|\nu|(\mathbb{X}))^{1/p'}\sum_{j=m+1}^\infty 2^{(j-m)q/p}\|\tau_j(f)\|_p\nonumber\\
& \le& c2^{-mq/p}|\!|\!|\nu|\!|\!|_{R,d}^{1/p}(|\nu|(\mathbb{X}))^{1/p'}\sum_{j=m+1}^\infty 2^{-j(\gamma-q/p)}\nonumber\\
&=&c2^{-m\gamma}|\!|\!|\nu|\!|\!|_{R,d}^{1/p}(|\nu|(\mathbb{X}))^{1/p'}.
\end{eqnarray}
In view of \eqref{approxquadpoly},  we obtain using Corollary~\ref{sigmahnormcor} that
$$
\left|\int_\mathbb{X} \sigma_{2^m}(f)d\mu^* - \int_\mathbb{X} \sigma_{2^m}(f)d\nu\right|\le \frac{A}{2^{m\gamma}}\|\sigma_{2^m}(f)\|_{H_\gamma^p} \le c\frac{A}{2^{m\gamma}}\|f\|_{H_\gamma^p} =c\frac{A}{2^{m\gamma}}.
$$
Using this observation and \eqref{pf4eqn1},
we deduce that
\begin{eqnarray*}
\left|\int_\mathbb{X} fd\mu^*-\int_\mathbb{X} fd\nu\right| &=& \left|\int_\mathbb{X} \sigma_{2^m}(f)d\mu^*-\int_\mathbb{X} fd\nu\right| \\
&\le& \left|\int_\mathbb{X} \sigma_{2^m}(f)d\mu^* - \int_\mathbb{X} \sigma_{2^m}(f)d\nu\right| + \left|\int_\mathbb{X} fd\nu -\int_\mathbb{X} \sigma_{2^m}(f)d\nu\right|\\
&\le& c\left(A+|\!|\!|\nu|\!|\!|_{R,d}^{1/p}(|\nu|(\mathbb{X}))^{1/p'}\right)2^{-m\gamma}.
\end{eqnarray*}
This proves Theorem~\ref{maintheo}. 
\qed

In order to prove Theorem~\ref{kernmintheo}, we first summarize in Proposition~\ref{networkkernprop} below some properties of the kernels $G$ introduced in Definition~\ref{eigkerndef}, including the existence of such a kernel. This proposition is proved in \cite[Proposition~5.2]{eignet}; we state it with $p'$ in \cite[Proposition~5.2]{eignet} replaced by $p$ per the requirement of our proof.
 Although the set up there is stated as that of a compact smooth manifold without boundary, the proofs are verbatim the same for data-defined spaces.

Let $b$ be a mask of type $\beta\in\mathbb{R}$. In the sequel, if $N>0$, we will write $b_N(t)=b(Nt)$. 

\begin{proposition}\label{networkkernprop}
Let $1\le p\le\infty$, $\beta>q/p$,  $G$ be a kernel of type $\beta$.\\
{\rm (a)} For every $y\in\mathbb{X}$, there exists $\psi_y:=G(\circ,y)\in X^{p'}$ such that $\langle \psi_y, \phi_k\rangle =b(\lambda_k)\phi_k(y)$, $k=0,1,\cdots$. We have
\begin{equation}\label{glpuniform}
\sup_{y\in\mathbb{X}}\|G(\circ,y)\|_{p'} \le c.
\end{equation}
{\rm (b)} Let 
 $n\ge 1$ be an integer, $\nu\in{\cal R}_{2^{-n}}$, and for $F\in L^1(\nu)\cap L^\infty(\nu)$, $m\ge n$,
$$
U_m(F,x):=\int_{y\in\mathbb{X}} \{G(x,y)-\Phi_{2^{m}}(hb_{2^{m}};x,y)\}F(y)d\nu(y).
$$
Then 
\begin{equation}\label{terrest}
\|U_m(F)\|_{p'} \le c2^{-m\beta}2^{q(m-n)/p}\|\nu\|_{R, 2^{-n}}\|F\|_{\nu;\mathbb{X},p'}.
\end{equation}
\end{proposition}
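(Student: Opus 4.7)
The plan is to split $G$ into a dyadic sum of band-limited kernels built from the cutoff $h$ of Section~\ref{prepsect}, and then reduce everything to the localization estimates in Proposition~\ref{criticalprop}.

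\textbf{Step 1 (dyadic decomposition).} Let $g(u)=h(u)-h(2u)$ (supported in $[1/4,1]$) and set
$$
H_j(u)=b(2^j u)\,g(u),\qquad G_j(x,y):=\Phi_{2^j}(H_j;x,y)=\sum_k b(\lambda_k)\,g(\lambda_k/2^j)\,\phi_k(x)\phi_k(y).
$$
Since $h(t/2^N)\to 1$ pointwise in $t$, the functions $\{g(\cdot/2^j)\}_{j\ge 0}$ form a dyadic partition of unity, and hence $\sum_j g(\lambda_k/2^j)=1$ for every $k$. Using that $b$ is a mask of type $\beta$ and $g$ is compactly supported away from $0$, a direct derivative count on $b(2^j u)=(1+2^j u)^{-\beta}F_b(j\log 2+\log u)$ yields $\||H_j\||_S\le c\,2^{-j\beta}$.

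\textbf{Step 2 (part (a)).} Apply \eqref{philpnorm} with $\nu=\mu^*$, $d=0$, $N=2^j$, $H=H_j$ to obtain
$$
\|G_j(x,\cdot)\|_{p'}\le c\,2^{jq/p}\,\||H_j\||_S\le c\,2^{j(q/p-\beta)}.
$$
Since $\beta>q/p$, the series $\sum_j G_j(x,\cdot)$ converges absolutely in $L^{p'}$, uniformly in $x$; its limit $\psi_y:=G(\cdot,y)\in X^{p'}$ satisfies \eqref{glpuniform}. The Fourier coefficients are computed termwise:
$$
\langle G_j(\cdot,y),\phi_k\rangle=H_j(\lambda_k/2^j)\,\phi_k(y)=b(\lambda_k)\,g(\lambda_k/2^j)\,\phi_k(y),
$$
and summing in $j$ collapses to $b(\lambda_k)\phi_k(y)$ by the partition-of-unity property.

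\textbf{Step 3 (part (b)).} Since $1-h(t/2^m)=\sum_{j>m}g(t/2^j)$, one has
$$
G(x,y)-\Phi_{2^m}(hb_{2^m};x,y)=\sum_{j>m}G_j(x,y),\qquad U_m(F,x)=\sum_{j>m}V_j(x),
$$
with $V_j(x)=\int_\mathbb{X} G_j(x,y)F(y)\,d\nu(y)$. To bound $\|V_j\|_{p'}$, pair with $\phi\in L^p$, $\|\phi\|_p\le 1$, and use Fubini plus the symmetry of $\Phi_{2^j}$:
$$
\left|\int V_j\phi\,d\mu^*\right|=\left|\int F(y)\,\sigma_{2^j}(H_j;\phi)(y)\,d\nu(y)\right|\le \|F\|_{\nu;\mathbb{X},p'}\,\|\sigma_{2^j}(H_j;\phi)\|_{\nu;\mathbb{X},p}.
$$
Since $\sigma_{2^j}(H_j;\phi)\in \Pi_{2^j}$, the crucial ingredient is a polynomial Marcinkiewicz--Zygmund inequality of the form
$$
\|P\|_{\nu;\mathbb{X},p}\le c\,|\!|\!|\nu|\!|\!|_{R,2^{-j}}^{1/p}\,\|P\|_p,\qquad P\in\Pi_{2^j};
$$
this I would prove by writing $P=\sigma_{2^{j+1}}(P)$, applying H\"older pointwise and then Fubini with $\int|\Phi_{2^{j+1}}(x,y)|d|\nu|(x)\le c|\!|\!|\nu|\!|\!|_{R,2^{-j}}$ from \eqref{phiinttotal}. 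Combining with the scale-change bound $|\!|\!|\nu|\!|\!|_{R,2^{-j}}\le c\,2^{(j-n)q}|\!|\!|\nu|\!|\!|_{R,2^{-n}}$ from Proposition~\ref{mzequivprop}(b) and the operator bound $\|\sigma_{2^j}(H_j;\phi)\|_p\le c\,2^{-j\beta}\|\phi\|_p$ from \eqref{sigmaopbd} gives
$$
\|V_j\|_{p'}\le c\,2^{-j\beta}\,2^{(j-n)q/p}\,|\!|\!|\nu|\!|\!|_{R,2^{-n}}^{1/p}\,\|F\|_{\nu;\mathbb{X},p'}.
$$
Summing over $j>m$: because $\beta>q/p$, the geometric series is dominated by its first term, yielding \eqref{terrest}.

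\textbf{Main obstacle.} The rest is bookkeeping, but the polynomial MZ-type inequality needed in Step~3 is the delicate point: one must carefully arrange the application of H\"older so that the measure-theoretic constant that appears is precisely $|\!|\!|\nu|\!|\!|_{R,2^{-j}}$ at the dyadic scale of the polynomial, and then transfer this to the fixed scale $2^{-n}$ using Proposition~\ref{mzequivprop}(b). Getting the exponents of the various $2^j$ factors to line up so that the final sum telescopes to $2^{-m\beta}2^{(m-n)q/p}$ is where all the hypotheses ($\beta>q/p$, $j\ge n$, $S>q$) are consumed.
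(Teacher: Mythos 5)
The paper itself does not prove this proposition; it cites \cite[Proposition~5.2]{eignet} and asserts that the proof carries over verbatim to data-defined spaces. Your route is precisely the standard argument underlying that reference: a dyadic Littlewood--Paley decomposition of the mask, the localization estimates of Proposition~\ref{criticalprop}, a Marcinkiewicz--Zygmund transfer from $\mu^*$ to $\nu$, and a geometric summation. Steps 2 and 3 check out: the derivative count giving $\||H_j\||_S\le c\,2^{-j\beta}$ is correct on the support of $g$; the duality pairing and Fubini are legitimate; your MZ inequality follows exactly as you sketch it, from $P=\sigma_{2^{j+1}}(P)$, pointwise H\"older against $|\Phi_{2^{j+1}}(x,y)|\,d\mu^*(y)$, \eqref{kernbdest}, \eqref{phiinttotal}, and Proposition~\ref{mzequivprop}(b); and since $\beta>q/p$ the sum over $j>m$ is dominated by its first term, giving the exponent $2^{-m\beta}2^{q(m-n)/p}$.

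There is one concrete slip in Step 1 that propagates into Step 2. With $g(u)=h(u)-h(2u)$ one has $\sum_{j\ge 0}g(t/2^j)=1-h(2t)$, not $1$; the identity fails for $|t|<1/2$, in particular at $\lambda_0=0$. Consequently $\sum_{j\ge 0}G_j$ is not $G$: it omits the low-frequency block $\sum_k b(\lambda_k)h(2\lambda_k)\phi_k(x)\phi_k(y)$ (including the constant term $b(0)$), and your termwise computation of $\langle\psi_y,\phi_k\rangle$ does not collapse to $b(\lambda_k)\phi_k(y)$ for small $\lambda_k$. The repair is routine: use the correct identity $h(t)+\sum_{j\ge 1}g(t/2^j)=1$, write $G=\Phi_1(hb;\cdot,\cdot)+\sum_{j\ge 1}G_j$, and bound the fixed first kernel in $L^{p'}$ by \eqref{philpnorm}. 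Part (b) is unaffected, because the identity you actually invoke there, $1-h(t/2^m)=\sum_{j>m}g(t/2^j)$, is correct. A final cosmetic point: your argument produces the factor $|\!|\!|\nu|\!|\!|_{R,2^{-n}}^{1/p}$ where \eqref{terrest} displays the first power; this is immaterial in every application in the paper (where the regularity norms are bounded by constants), but you should state the estimate in the form you actually prove.
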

 
It is convenient to prove Proposition~\ref{kernminprop} before proving Theorem~\ref{kernmintheo}.

\vskip0.5cm
\noindent\textit{Proof of Proposition~\ref{kernminprop}.}\\

Let $P\in \Pi_{2^n}$, and we define
$$
\mathcal{D}_G(P)(x)=\sum_j \frac{\hat{P}(j)}{b(\lambda_j)}\phi_j(x), \qquad x\in\mathbb{X}.
$$
Then it is easy to verify that
\begin{equation}\label{pf6eqn3}
P(x)=\int_\mathbb{X} G(x,y)\mathcal{D}_G(P)(y)d\mu^*(y).
\end{equation}
Using Fubini's theorem and the condition that $M_p(\nu^\#)\le \tilde{A}2^{-n\beta}$, we deduce that
\begin{eqnarray}\label{pf6eqn4}
\lefteqn{\left|\int_\mathbb{X} P(x)d\nu^\#(x)-\int_\mathbb{X} P(x)d\mu^*(x)\right|}\nonumber\\
&=&\left|\int_\mathbb{X} \mathcal{D}_G(P)(y)\left\{\int_\mathbb{X} G(x,y)d\nu^\#(x)-\int_\mathbb{X} G(x,y)d\mu^*(x)\right\}d\mu^*(y)\right|\nonumber\\
&\le& \|\mathcal{D}_G(P)\|_p M_p(\nu^\#)\le \tilde{A}2^{-n\beta}\|\mathcal{D}_G(P)\|_p.
\end{eqnarray}
Let $0<\gamma<r<\beta$. We have proved in \cite[Lemma~5.4(b)]{eignet} that
$$
\|\mathcal{D}_G(P)\|_p \le c2^{n(\beta-r)}\|\Delta^r P\|_p.
$$
Hence, Corollary~\ref{polyspaceequivcor} implies that
$$
\|\mathcal{D}_G(P)\|_p \le c2^{n(\beta-\gamma)}\|P\|_{H^p_\gamma}.$$
Using  \eqref{pf6eqn4}, we now conclude that
$$
\left|\int_\mathbb{X} P(x)d\nu^\#(x)-\int_\mathbb{X} P(x)d\mu^*(x)\right|\le \tilde{A}2^{-n\beta}\|\mathcal{D}_G(P)\|_p  \le c\tilde{A}2^{-n\beta}2^{n(\beta-\gamma)}\|P\|_{H^p_\gamma}.
$$
Thus, $\nu^\#$ satisfies \eqref{approxquadpoly}.
\qed

\vskip0.5cm
\noindent\textit{Proof of Theorem~\ref{kernmintheo}.}\\

We note that 
$$
\int_\mathbb{X} \left\{G(x, y)-\Phi_{2^n}(hb_{2^n};x,y)\right\}d\mu^*(x)=0, \qquad y\in\mathbb{X}.
$$
Since $\nu^*\in K$, $\nu^*$ is a quadrature measure of order $2^n$, and $\Phi_{2^n}(hb_{2^n};x,\circ)\in\Pi_{2^n}$, we obtain that
\begin{eqnarray}\label{pf6eqn1}
M_p(\nu^\#) &\le& c\inf_{\nu\in K}M_p(\nu)\le cM_p(\nu^*)\nonumber\\
&=&c\left\|\int_\mathbb{X} \left\{G(x, \circ)-\Phi_{2^n}(hb_{2^n};x,\circ)\right\}d\nu^*(x)\right.\nonumber\\
&& \qquad\qquad \left.-\int_\mathbb{X} \left\{G(x, \circ)-\Phi_{2^n}(hb_{2^n};x,\circ)\right\}d\mu^*(x)\right\|_{p'}\nonumber\\
&=& c\left\|\int_\mathbb{X} \left\{G(x, \circ)-\Phi_{2^n}(hb_{2^n};x,\circ)\right\}d\nu^*(x)\right\|_{p'}.
\end{eqnarray}
We now use Proposition~\ref{networkkernprop}(b) with $F\equiv 1$,
$m=n$, to conclude that
\begin{equation}\label{pf6eqn2}
M_p(\nu^\#) \le c2^{-n\beta}.
\end{equation}
Thus, $\nu^\#$ satisfies the conditions in Proposition~\ref{kernminprop}, and hence, \eqref{approxquadpoly}. \qed 

The main idea in the proof of Theorem~\ref{covertheo} below is to show that the localization of the kernels $\Phi_N$ imply via Proposition~\ref{criticalprop} that the integral of $\Phi_N(x,\cdot)$ on $\mathbb{X}$ is concentrated on a ball of radius $\sim 1/N$ around $x$.\\

\vskip0.5cm
\noindent\textit{Proof of Theorem~\ref{covertheo}.}\\

Let $n\ge 1$,  $\nu=\nu_n\in\aleph$, and $x\in\mathbb{X}$. In this proof only, let  $\alpha\in (0,1)$ be fixed (to be chosen later), $N$ be defined by 
\begin{equation}\label{Nnrel}
N^{\gamma+q/p'}=(\alpha n)^\gamma; \quad\mbox{i.e.; } 
N= (\alpha n)^{1/\tilde{p}}.
\end{equation}
 We consider the polynomial $P=\Phi_{N}(x,\circ)$ and note that $P\in \Pi_N\subseteq \Pi_n$.
Since  $\nu$ satisfies \eqref{approxquadpoly}, 
\begin{equation}\label{pf5eqn2}
\left|1-\int_\mathbb{X} P(y)d\nu(y)\right|=\left|\int_\mathbb{X} P(y)d\mu^*(y)-\int_\mathbb{X} P(y)d\nu(y)\right| \le \frac{c}{n^{\gamma}}\|P\|_{H_\gamma^p}.
\end{equation}
In view of \eqref{polyspaceequiv} in Corollary~\ref{polyspaceequivcor}, and Proposition~\ref{equivprop},
we deduce using the definition \eqref{Nnrel} that
$$
\|P\|_{H_\gamma^p} \le cN^{\gamma}\|P\|_p \le cN^{\gamma+q/p'}\|P\|_1 \le cN^{\gamma+q/p'}=c\alpha^\gamma n^\gamma.
$$
Therefore, \eqref{pf5eqn2} leads to
$$
\left|1-\int_\mathbb{X} P(y)d\nu(y)\right| \le c\alpha^\gamma.
$$
We now choose $\alpha$ to be sufficiently small to ensure that
\begin{equation}\label{pf5eqn3}
\left|1-\int_\mathbb{X} \Phi_N(x,y)d\nu(y)\right|=\left|1-\int_\mathbb{X} P(y)d\nu(y)\right|\le 1/2, \qquad n\ge 1.
\end{equation}
Next, we use \eqref{phiintaway} with $h$ in place of $H$,  $d=1/n$, and $r=\lambda/N$ for sufficiently large $\lambda\ge 1$ to  be chosen later. Recalling that $N\le n$, this yields
\begin{eqnarray}\label{pf5eqn4}
\left|\int_{\Delta(x,\lambda/N)} P(y)d\nu(y)\right| &\le& \int_{\Delta(x,\lambda/N)} |\Phi_N(x,y)|d|\nu|(y)\nonumber\\
&\le& c(1+(N/n)^q)(\lambda )^{-S+q} \le 
c\lambda^{-S+q} ,
\end{eqnarray}
where we recall our convention that $|\!|\!|\nu|\!|\!|_{R,1/n}$ is assumed to be bounded independently of $n$. We now choose $\lambda$ to be large enough so that
\begin{equation}\label{pf5eqn5}
\int_{\Delta(x,\lambda/N)} |\Phi_N(x,y)|d|\nu|(y)\le 1/4.
\end{equation}
 Together with \eqref{pf5eqn3}, \eqref{pf5eqn4}, this leads to
\begin{equation}\label{pf5eqn6}
1/4\le \int_{\mathbb{B}
(x,\lambda/N)}\Phi_N(x,y)d\nu(y) \le 7/4, \qquad n\ge 1.
\end{equation}
Since $|\Phi_N(x,y)|\le cN^q$ (cf. \eqref{kernlocest}), we deduce that
\begin{eqnarray*}
1/4 &\le&  \int_{\mathbb{B}(x,\lambda/N)}\Phi_N(x,y)d\nu(y)\le  \int_{\mathbb{B}(x,\lambda/N)}|\Phi_N(x,y)|d|\nu|(y)\\
& \le& cN^q|\nu|(\mathbb{B}(x,\lambda/N)).
\end{eqnarray*}
This implies \eqref{nulowbd}.
\qed

Finally, the proof of Theorem~\ref{posimpliesreg} mimics that of \cite[Theorem~5.8]{modlpmz} (see also \cite[Theorem~5.5(a)]{modlpmz} to see the connection with regular measures). Unlike in that proof, we use only the approximate quadrature measures rather than exact quadrature measures.\\
 
\vskip0.5cm
\noindent\textit{Proof of Theorem~\ref{posimpliesreg}.}\\

Let $x\in\mathbb{X}$. With $A^*$ defined as in the product assumption and $\beta$ as in \eqref{phinlowbd}, let $\tilde{A}=\max(A^*,1/\beta)$.  In view of  \eqref{phinlowbd} and the fact that $\nu$ is a positive measure, we obtain  that
\begin{equation}\label{pf7eqn1}
n^{2q}\nu(\mathbb{B}(x,1/n)) \le c\int_{\mathbb{B}(x,1/n)}|\Phi_{n/\tilde{A}}(x,y)|^2d\nu(y) \le \int_\mathbb{X} |\Phi_{n/\tilde{A}}(x,y)|^2d\nu(y).
\end{equation}
Let $R\ge 1$. In view of \eqref{prodapprox} in Lemma~\ref{nikollemma}, there exists $Q\in\Pi_n$ such that 
\begin{equation}\label{pf7eqn2}
\|\Phi_{n/\tilde{A}}(x,\circ)^2-Q\|_\infty \le cn^{-R}\|\Phi_{n/\tilde{A}}(x,\circ)\|_1^2 \le cn^{-R}.
\end{equation}
Since $\nu$ satisfies \eqref{approxquadpoly}, we obtain first that 
$$
\left|\int_\mathbb{X} \phi_0d\mu^*-\int_\mathbb{X}\phi_0d\nu\right|\le c,
$$
so that $|\nu|(\mathbb{X})\le c$, and then conclude using \eqref{pf7eqn2} that
\begin{eqnarray}\label{pf7eqn3}
\lefteqn{\left|\int_\mathbb{X} |\Phi_{n/\tilde{A}}(x,y)|^2d\mu^*(y)-\int_\mathbb{X} |\Phi_{n/\tilde{A}}(x,y)|^2d\nu(y) \right|}\nonumber\\
&\le& \int_\mathbb{X} ||\Phi_{n/\tilde{A}}(x,y)|^2-Q(y)|d\mu^*(y)+\int_\mathbb{X} ||\Phi_{n/\tilde{A}}(x,y)|^2-Q(y)|d\nu(y) \nonumber\\
&&\quad+\left|\int_\mathbb{X} Qd\mu^*-\int_\mathbb{X} Qd\nu\right|\le \frac{c}{n^R}+c\frac{\|Q\|_{H_\gamma^p}}{n^\gamma}.
\end{eqnarray}
Since $Q\in\Pi_n$, Lemma~\ref{nikollemma}, Corollary~\ref{polyspaceequivcor}, and \eqref{pf7eqn2} lead to
\begin{eqnarray}\label{pf7eqn4}
\frac{\|Q\|_{H_\gamma^p}}{n^\gamma}&\le& c\|Q\|_p\le cn^{q/p'}\|Q\|_1\le cn^{q/p'}\left\{\|\Phi_{n/\tilde{A}}(x,\circ)^2-Q\|_\infty +\|\Phi_{n/\tilde{A}}(x,\circ)^2\|_1\right\}\nonumber\\
&\le& cn^{q/p'}\{n^{-R}+\|\Phi_{n/\tilde{A}}(x,\circ)^2\|_1\}.
\end{eqnarray}
In view of \eqref{philpnorm} in Proposition~\ref{criticalprop}, used with $\mu^*$ in place of $\nu$, $d=0$, $p=2$, we see that
$$
\|\Phi_{n/\tilde{A}}(x,\circ)^2\|_1=\|\Phi_{n/\tilde{A}}(x,\circ)\|_2^2\le cn^q.
$$
Therefore, \eqref{pf7eqn4} and \eqref{pf7eqn3} imply  that
$$
\int_\mathbb{X} |\Phi_{n/\tilde{A}}(x,y)|^2d\nu(y) \le cn^{q+q/p'}.
$$
Together with \eqref{pf7eqn1}, this leads to 
\eqref{regmeasbd}.
\qed

%%%%%%%%%%%%%%%%%%%%%%%%%%%%%%%%%%%%%%%%%%%%%%%%%%%%%%%%%%%%%%%%%%%%%%%%%%%%%%%%%%%%%%%%%%%
%%% The acknowledgements
\begin{acknowledgement}
The research of this author is supported in part by ARO Grant W911NF-15-1-0385. We thank the referees for their careful reading and  helpful comments.
\end{acknowledgement}

%%%%%%%%%%%%%%%%%%%%%%%%%%%%%%%%%%%%%%%%%%%%%%%%%%%%%%%%%%%%%%%%%%%%%%%%%%%%%%%%%%%%%%%%%%%
%%% The bibliography
%
% BibTeX users please use
%\bibliographystyle{spmpsci}
%\bibliography{mybiblio}
%   and then copy and paste the content of the .bbl file here for the final version.
%
% E.g.:
%\bibliographystyle{spmpsci}
%\bibliography{/Users/hrushikesh/Documents/hrushikesh/pctexfiles/hrushikesh}

\begin{thebibliography}{10}
\providecommand{\url}[1]{{#1}}
\providecommand{\urlprefix}{URL }
\expandafter\ifx\csname urlstyle\endcsname\relax
  \providecommand{\doi}[1]{DOI~\discretionary{}{}{}#1}\else
  \providecommand{\doi}{DOI~\discretionary{}{}{}\begingroup
  \urlstyle{rm}\Url}\fi

\bibitem{niyogi2}
Belkin, M., Niyogi, P.: Semi-supervised learning on {R}iemannian manifolds.
\newblock Machine learning \textbf{56}(1-3), 209--239 (2004)

\bibitem{belkin2007convergence}
Belkin, M., Niyogi, P.: Convergence of {L}aplacian eigenmaps.
\newblock Advances in Neural Information Processing Systems \textbf{19}, 129
  (2007)

\bibitem{belkinfound}
Belkin, M., Niyogi, P.: Towards a theoretical foundation for {L}aplacian-based
  manifold methods.
\newblock Journal of Computer and System Sciences \textbf{74}(8), 1289--1308
  (2008)

\bibitem{bondarenko2010optimal}
Bondarenko, A., Radchenko, D., Viazovska, M.: Optimal asymptotic bounds for
  spherical designs.
\newblock Ann. of Math. \textbf{178}(2), 443--452 (2013)

\bibitem{brandolini2014quadrature}
Brandolini, L., Choirat, C., Colzani, L., Gigante, G., Seri, R., Travaglini,
  G.: Quadrature rules and distribution of points on manifolds.
\newblock Ann. Sc. Norm. Super. Pisa Cl. Sci \textbf{13}, 889--923 (2014)

\bibitem{brauchart2014qmc}
Brauchart, J., Saff, E., Sloan, I., Womersley, R.: {QMC} designs: optimal order
  quasi monte carlo integration schemes on the sphere.
\newblock Mathematics of computation \textbf{83}(290), 2821--2851 (2014)

\bibitem{brauchart2015covering}
Brauchart, J.S., Dick, J., Saff, E.B., Sloan, I.H., Wang, Y.G., Womersley,
  R.S.: Covering of spheres by spherical caps and worst-case error for equal
  weight cubature in sobolev spaces.
\newblock Journal of Mathematical Analysis and Applications \textbf{431}(2),
  782--811 (2015)

\bibitem{ehler_covering_radius2016}
Breger, A., Ehler, M., Graef, M.: Points on manifolds with asymptotically
  optimal covering radius.
\newblock Preprint at arXiv:1607.06899  (2016)

\bibitem{ehler_qmc_grassman2016}
Breger, A., Ehler, M., Graef, M.: Quasi monte carlo integration and
  kernel-based function approximation on grassmannians.
\newblock Preprint at arXiv:1605.09165  (2016)

\bibitem{coifmanlafondiffusion}
Coifman, R.R., Lafon, S.: Diffusion maps.
\newblock Applied and computational harmonic analysis \textbf{21}(1), 5--30
  (2006)

\bibitem{davies1997}
Davies, E.B.: ${L}^p$ spectral theory of higher-order elliptic differential
  operators.
\newblock Bulletin of the London Mathematical Society \textbf{29}(05), 513--546
  (1997)

\bibitem{devlorbk}
DeVore, R.A., Lorentz, G.G.: Constructive approximation, vol. 303.
\newblock Springer Science \& Business Media (1993)

\bibitem{compbio}
Ehler, M., Filbir, F., Mhaskar, H.N.: Locally learning biomedical data using
  diffusion frames.
\newblock Journal of Computational Biology \textbf{19}(11), 1251--1264 (2012)

\bibitem{frankbern}
Filbir, F., Mhaskar, H.N.: A quadrature formula for diffusion polynomials
  corresponding to a generalized heat kernel.
\newblock Journal of Fourier Analysis and Applications \textbf{16}(5), 629--657
  (2010)

\bibitem{modlpmz}
Filbir, F., Mhaskar, H.N.: Marcinkiewicz--{Z}ygmund measures on manifolds.
\newblock Journal of Complexity \textbf{27}(6), 568--596 (2011)

\bibitem{geller2011band}
Geller, D., Pesenson, I.Z.: Band-limited localized {P}arseval frames and
  {B}esov spaces on compact homogeneous manifolds.
\newblock Journal of Geometric Analysis \textbf{21}(2), 334--371 (2011)

\bibitem{grigorlyan2heat}
Grigorlyan, A.: Heat kernels on metric measure spaces with regular volume
  growth.
\newblock Handbook of Geometric Analysis \textbf{2} (2010)

\bibitem{grigor1999estimates}
Grigor’yan, A.: Estimates of heat kernels on {R}iemannian manifolds.
\newblock London Math. Soc. Lecture Note Ser \textbf{273}, 140--225 (1999)

\bibitem{grigor2006heat}
Grigor’yan, A.: Heat kernels on weighted manifolds and applications.
\newblock Cont. Math \textbf{398}, 93--191 (2006)

\bibitem{besovquadpap}
Hesse, K., Mhaskar, H.N., Sloan, I.H.: Quadrature in {B}esov spaces on the
  euclidean sphere.
\newblock Journal of Complexity \textbf{23}(4), 528--552 (2007)

\bibitem{jones2008parameter}
Jones, P.W., Maggioni, M., Schul, R.: Manifold parametrizations by
  eigenfunctions of the {L}aplacian and heat kernels.
\newblock Proceedings of the National Academy of Sciences \textbf{105}(6),
  1803--1808 (2008)

\bibitem{kordyukov1991p}
Kordyukov, Y.A.: ${L}^p$--theory of elliptic differential operators on
  manifolds of bounded geometry.
\newblock Acta Applicandae Mathematica \textbf{23}(3), 223--260 (1991)

\bibitem{lafon}
Lafon, S.S.: Diffusion maps and geometric harmonics.
\newblock Ph.D. thesis, Yale University (2004)

\bibitem{mauropap}
Maggioni, M., Mhaskar, H.N.: Diffusion polynomial frames on metric measure
  spaces.
\newblock Applied and Computational Harmonic Analysis \textbf{24}(3), 329--353
  (2008)

\bibitem{tauberian}
Mhaskar, H.N.: A unified framework for harmonic analysis of functions on
  directed graphs and changing data.
\newblock \textit{Applied and Computational Harmonic Analysis}, published
  online June 28, 2016.

\bibitem{eignet}
Mhaskar, H.N.: Eignets for function approximation on manifolds.
\newblock Applied and Computational Harmonic Analysis \textbf{29}(1), 63--87
  (2010)

\bibitem{heatkernframe}
Mhaskar, H.N.: A generalized diffusion frame for parsimonious representation of
  functions on data defined manifolds.
\newblock Neural Networks \textbf{24}(4), 345--359 (2011)

\bibitem{rivlin1974chebyshev}
Rivlin, T.J.: The Chebyshev polynomials.
\newblock John Wiley and Sons (1974)

\bibitem{rosasco2010learning}
Rosasco, L., Belkin, M., Vito, E.D.: On learning with integral operators.
\newblock Journal of Machine Learning Research \textbf{11}(Feb), 905--934
  (2010)

\bibitem{singer}
Singer, A.: From graph to manifold {L}aplacian: The convergence rate.
\newblock Applied and Computational Harmonic Analysis \textbf{21}(1), 128--134
  (2006)

\end{thebibliography}

\end{document}